\newtheorem{theorem}{Theorem}[section]
\newtheorem{lemma}[theorem]{Lemma}
\newtheorem{remark}[theorem]{Remark}
\newcommand{\R}{\mathbb{R}}
\newcommand{\N}{\mathbb{N}}
\newcommand{\p}{\partial}
\newcommand{\ringw}{\mathring{w}}
\newcommand{\calH}{\mathcal{H}}
\newcommand{\afm}{\text{afm}}
\newcommand{\bfm}{\text{bfm}}
\newcommand{\aff}{\text{af}}
\newcommand{\bff}{\text{bf}}
\newcommand{\scrl}{\mathscr{L}}
\newcommand{\scrH}{\mathscr{H}}
\title{Invertibility of a linearized Boussinesq flow: a symbolic approach}
\author{Tarek M. Elgindi\thanks{Department of Mathematics, Duke University. E-mail: tarek.elgindi@duke.edu} \space and Federico Pasqualotto\thanks{Department of Mathematics, UC Berkeley. E-mail: fpasqualotto@berkeley.edu}}
\begin{document}

\maketitle

\begin{abstract}
    We develop a computer-assisted \emph{symbolic} method to show that a linearized Boussinesq flow in self-similar coordinates gives rise to an invertible operator.
\end{abstract}

\tableofcontents

\section{Introduction}
A well-established paradigm to construct almost self-similar blow-up solutions to evolution PDEs is to find an approximate simplified model, and then build a solution to the original problem via a \emph{perturbation argument} about the solution of the approximate problem. The properties of the linearization about the approximate solution are thus fundamental for the full construction, as shown in various works (\cite{mrrs, E_Classical} and many other works). The main purpose of the present work is to study a linear problem arising in the context of our analysis of the 2d Boussinesq and 3d Euler equations. In the course of our proof (\cite{EP2023}, Lemma 5.1), we study the linearized operator around an approximate self-similar profile of the Boussinesq equations in self-similar coordinates, and we require that this operator is suitably invertible (up to a one-dimensional kernel). We prove such invertibility statement below.

More precisely, recall that 
\begin{equation}
    \mathfrak{L}f := f + z\p_z f + \scrl f,
\end{equation}
where $\scrl$ was defined in~\cite{EP2023}, Section 4.1. For $k \in \N$, $k \geq 1$, recall the space $\scrH^k$, which was defined in \cite{EP2023}, display~(50). We consider $\mathfrak{L}$ on its natural domain $D(\mathfrak{L})$ whenever the range is $\scrH^k$. With this setup, our goal is to prove the following Theorem.

\begin{theorem}\label{thm:inv} 
Let $k \geq 4$. $ \text{ker}(\mathfrak{L})$ is one-dimensional, and $\text{ker}(\mathfrak{L}) \cap \text{range}(\mathfrak{L}) = \{0\}$.
\end{theorem}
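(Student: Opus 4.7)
The plan is to exploit the fact that the Euler-type part $f \mapsto f + z\p_z f$ of $\mathfrak{L}$ acts diagonally on monomials, with eigenvalue $n+1$ on a homogeneous polynomial of degree $n$, so that a zero eigenvalue can only emerge through the interaction with the perturbation $\scrl$. First I would fix a polynomial basis of $\scrH^k$ adapted to the self-similar scaling and expand $\mathfrak{L}$ as an infinite matrix in this basis. The key structural input, read off from the explicit formula for $\scrl$ in \cite{EP2023}, Section 4.1, is that $\scrl$ respects a grading by total polynomial degree (or at worst a filtration by degree), so that $\mathfrak{L}$ is represented as a block lower-triangular matrix whose degree-$n$ diagonal block is $(n+1) I + \scrl_n$.

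Granted such a decomposition, the degree-$n$ diagonal blocks are invertible for $n$ sufficiently large by a Neumann-type argument: using a quantitative bound on $\|\scrl_n\|$ extracted from the definition of $\scrH^k$ (which requires $k \geq 4$ to close), the term $(n+1) I$ dominates above some threshold $N$, and the restriction of $\mathfrak{L}$ to degrees $> N$ is invertible on its image with trivial kernel. This reduces both claims of the theorem to a statement about the finite-dimensional truncation $\mathfrak{L}_{\leq N}$, which, as announced in the abstract, I would handle symbolically in exact rational arithmetic.

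To prove one-dimensionality of the kernel I would produce an explicit generator $v_0 \in \ker(\mathfrak{L})$, which is expected from the scaling symmetry of the underlying approximate self-similar Boussinesq profile; linearizations about scale-invariant objects carry a canonical zero-mode coming from the infinitesimal scaling derivative. A symbolic rank computation would then certify that $\mathrm{rank}(\mathfrak{L}_{\leq N}) = \dim(\scrH^k_{\leq N}) - 1$. For the range claim, I would test whether $v_0$ lies in the column span of $\mathfrak{L}_{\leq N}$, or equivalently examine the Jordan block of $\mathfrak{L}_{\leq N}$ at eigenvalue $0$ and verify that it has size exactly one; this precludes a generalized eigenvector in $D(\mathfrak{L})$ solving $\mathfrak{L} g = v_0$ and gives the transversality of $\ker$ and $\mathrm{range}$.

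The main obstacle I expect is the analytic justification of the finite-dimensional reduction. Checking that $\scrl$ preserves (or only mildly increases) the polynomial grading is delicate if $\scrl$ is nonlocal via a Biot-Savart inversion around a non-polynomial profile; if only a filtration holds, one must further control the off-diagonal part uniformly in $n$. Likewise, the tail estimate making $(n+1)I + \scrl_n$ invertible for $n > N$ has to be uniform, not merely verified on small $n$. Once these two ingredients are in place, the remaining computation on the truncated block is routine and yields a fully rigorous conclusion because it is performed symbolically over $\mathbb{Q}$.
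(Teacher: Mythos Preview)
Your approach has a genuine gap that is not merely a technical delicacy. The hoped-for polynomial grading or filtration does not exist for $\scrl$. The angular part $\mathcal{L}$ contains $L_\theta$, which in turn carries the first-order operator $D_\theta=\sin(2\theta)\p_\theta$; on any basis indexed by an angular mode number $m$ this contributes terms of size $O(m)$, not $O(1)$, so $\scrl$ is of the same order as $z\p_z$ rather than lower order, and the domination $(n+1)I+\scrl_n$ with $\|\scrl_n\|$ bounded never takes hold. In addition the coefficients of $\scrl_0$ involve $\tfrac{1}{1+z}$ and the profile functions $F_*,\Gamma^*$, which the paper emphasizes have only fractional regularity at $\theta=\pi/2$ and admit global series (Lemma~\ref{lem:spgap}) with merely polynomial convergence; multiplication by such functions respects no polynomial degree, so $\mathfrak{L}$ is not block lower-triangular in any polynomial basis and no finite truncation with a controllable tail is available. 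The obstacle you flagged in your last paragraph is thus fatal rather than surmountable.

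The paper's route is entirely different. It first reduces, via the substitution $q=\tfrac{(1+z)^2}{z}Q$ (which mods out the one-dimensional scaling kernel), to showing that $\scrl_0 q=0$ forces $q=0$. It then integrates out the $z$-variable using the semigroup $\exp(\eta(z)\mathcal{L})$ with $\eta(z)=\log\tfrac{z}{1+z}$, reducing everything to the positivity of the scalar function $\Upsilon(t)=\strokedint\exp(-t\mathcal{L})F_*$ for all $t\ge 0$ (Lemma~\ref{lem:first}). A duality argument against a time-dependent weight solving the adjoint local system converts this into a Volterra equation $\Upsilon+K_2*\Upsilon=g$ with positive kernel and source; the computer-assisted symbolic part then certifies $\Upsilon>0$ on $[0,\log 4]$ via odd Picard iterates of the Volterra equation, and on $[\log 4,\infty)$ via an $L^\infty$ decay estimate combined with a sharply computed Laplace-transform value at $\xi=-1$. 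No finite-dimensional matrix truncation of $\mathfrak{L}$ appears anywhere.
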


\begin{remark}
    Note that, in~\cite{EP2023}, we gave a softer argument that implies that $\text{ker}(\mathfrak{L})$ is one-dimensional (in the proof of Lemma 5.1). Therefore, the main purpose of the present work is to exclude the presence of a generalized kernel element.
\end{remark}

Upon close inspection of the operator $\mathscr{L}$, the absence of a generalized kernel can be reduced to the absence of kernel of a modified operator $\scrl_0$ acting on 2-vectors of functions of two variables, $(z,\theta)$, which is local in $z$ and nonlocal in $\theta$. This non-local feature constitutes the first main technical obstacle in obtaining the statement of Theorem~\ref{thm:inv}. In addition, the second main technical obstacle lies in the fact that the operator $\mathfrak{L}$ is the linearization around a certain profile, which only possesses fractional regularity at $\theta = \pi/2$ (see Lemma~3.9 in~\cite{EP2023}). Hence, if we wish to express the coefficients of this operator in the form of a global series in the angle $\theta$, the convergence will be very slow (this is, for instance, manifest in Lemma~\ref{lem:spgap} below\footnote{Note that the rate of convergence in this lemma is actually only a lower bound for the actual rate of convergence.}).

There are two main reductions which come into play in our proof. 
\begin{itemize}
    \item First, we notice (in Section~\ref{sec:sufficient}) that a sufficient condition to prove Theorem~\ref{thm:inv} is the positivity (for all times) of a time-dependent quantity which arises from taking the integral average (in the variable $\theta$) of the evolution of a function $F_*(\theta)$ under the semi-group associated to an integro-differential operator \emph{only involving the variable $\theta$}. We call this time-dependent quantity $\Upsilon(t)$.
    
\item Secondly, we show that $\Upsilon(t)$ satisfies a Volterra integral equation \emph{only involving the variable $t$}, whose (time-dependent) coefficients and kernel can be computed to arbitrary precision, provided that we take enough terms in the series expansion of the profile (and of another associated quantity, $w$, appearing in Lemma~\ref{lem:weight}).
\end{itemize}

Note that the above reductions ``transform'' the non-locality in $\theta$ to non-locality in time. We therefore reduce the problem to the analysis of a linear Volterra integral equation of the type:
$$
\Upsilon(t) + \int_0^t K(t-s) \Upsilon(s) ds = g(s),
$$
where $K$ and $g$ are positive functions. Moreover, it can be shown that $K(t)$ and $g(t)$ can be computed using a global series expansion\footnote{After integration in $\theta$, these coefficients then depend only on $t$.} (in $\theta$), however the convergence is polynomial and very slow. As mentioned earlier, this is related (among other things) to the lack of regularity of the profile. 

\subsection{The symbolic approach and comparison with recent literature}

Due to the slow convergence of the expansions in $\theta$, we require a large number of terms in the expansions of $K$ and $g$. This leads us to develop a \emph{symbolic approach} to compute these terms with the aid of a computer. The main advantage of the symbolic method is that we are able to perform all (computer-assisted) calculations on \emph{symbolic fractions}, which we are able to calculate \emph{exactly}. This effectively eliminates the need to keep track of the error bounds via interval arithmetic. It appears that such an approach would be the closest to a non-computer assisted proof. Moreover, in the final step, we require to prove that a certain expression is positive: in view of a number of reductions\footnote{Not limited to the reductions described above in the introduction.}, this is eventually reduced to studying the sign of certain polynomials of high degree with rational (symbolic) coefficients. This can be done exactly with symbolic calculations. 

We remark that what is presented here can certainly be achieved by means of simulating the original angular integro-differential equation, combined with interval arithmetic. We choose to carry out our argument in a way that is closest to an analytical proof\footnote{In addition, we do not exclude that an analytical (``by hand'') proof of the statement in Theorem~\ref{thm:inv} could be obtained implementing a more careful asymptotic analysis.}. Moreover, we highlight that, while the main technical component of our work relies on exact symbolic computations, certain steps within the computer-assisted part effectively implement procedures akin to those found in interval arithmetic or floating-point computation. For instance, the use of the built-in function \texttt{rat} to find simplified rational upper or lower bounds for exact rational numbers, driven by the need for computational efficiency, is analogous to the bounding of ranges in interval arithmetic. This approach allows us to achieve a similar goal of obtaining rigorous bounds on quantities of interest, with a less extensive technical framework than a full implementation of interval arithmetic.

Finally, a comparison is in order with recent advances in the area of computer assisted proofs in fluid dynamics. In the recent remarkable~\cite{ChenHouSmooth1}--\cite{ChenHouSmooth2}, Chen and Hou provide a computer-assisted argument for singularity formation from smooth data for the Boussinesq equation on $\mathbb{R}^2_+$ and the Euler equations on the interior of a cylinder. While the paper includes a significant analytical component, computer assistance is used at almost all levels of the proof in \cite{ChenHouSmooth1}--\cite{ChenHouSmooth2}, from construction of an approximate profile to the proof of coercivity of the associated linearized operator. 

The scope of the computer-assisted analysis in the present paper is significantly narrower and is only used to solve a sub-problem in a situation where \emph{a blow-up scenario has already been identified}. In our case, the computer assistance comes into play when verifying what (in essence) are the hypotheses of a fixed point argument: indeed, we need to verify suitable invertibility of the linearized operator at the endpoint $\alpha = 0$ (compare with the statement of Theorem~\ref{thm:inv}). A purely analytical proof appears to be technically challenging (though we do not rule it out), which leads us to opt for a computer-assisted symbolic method.

Finally, in the direction of computer-assisted proofs in fluid dynamics, let us also mention the recent works \cite{bcg2022, dahne, chen2021}. For a comprehensive discussion of computer-assisted proofs in PDEs, their applications, and interval arithmetic, we refer the interested reader to the survey~\cite{javi_review}. The above comparisons and references are far from complete; see the main paper of this series~\cite{EP2023} for a more detailed introduction and contextualization in the larger topic of singularity formation in incompressible fluids.

We proceed to introduce the main objects studied in this paper, and state the main theorem, from which Theorem~\ref{thm:inv} follows.

\subsection{Setup and main Theorem}

We consider the following integro-differential operator acting on functions $q: [0, \infty)_z \times [0, \pi/2]_\theta$:
$$
\mathscr{L}_0 q := z \partial_{z} q+\frac{1}{1+z} \mathcal{L} q+\frac 12 F_{*} \int_{z}^{\infty} \strokedint \frac{q}{(1+y)^{2}} d y.
$$
Here, whenever $\Gamma = (\Gamma_1, \Gamma_2)$ are functions of $\theta$, we have
$$
\mathcal{L}(\Gamma_1, \Gamma_2) =L_\theta \Gamma - \frac{\Gamma^*}{2} \strokedint \Gamma_1,
$$
and
\begin{equation}
    L_\theta \Gamma =  \Big(2 D_\theta \Gamma_1 + 6 \Gamma_1 -  \Gamma_2, \quad  2 D_\theta \Gamma_2 + 9 \Gamma_2 - 130 \cos^2 \theta \Big(\Gamma_1 - \strokedint \Gamma_1 \Big)  \Big) 
\end{equation}
Moreover $\Gamma^*$ is the profile, which is the unique solution (in the space $\calH^k_\theta$, which was defined in~\cite{EP2023}, Section 3) of the problem $ L_\theta \Gamma^* = 0$ with  $\strokedint \Gamma_1 = 2$,
and $F^*$ is obtained from $\Gamma^*$ as 
\begin{equation*}
F^*_1 = D_\theta \Gamma^*_1 + \frac 5 2 \Gamma^*_1, \qquad \qquad \qquad  F^*_2 = D_\theta \Gamma^*_2 + 4 \Gamma^*_2.
\end{equation*}
Finally, recall that we have, for a function $f:[0, \pi/2] \to \R$ depending only on $\theta$,
$$
D_\theta f = \sin(2\theta)\p_\theta f, \qquad \qquad \qquad  \strokedint f = \frac{2}{\pi}\int_0^{\frac \pi 2} f(\theta) d\theta.
$$
The main goal for the remainder of the paper is to prove the following Theorem.

\begin{theorem}\label{thm:main}
    Let $k \geq 2$, and suppose that $q \in C^k([0, \infty)_z \times [0, \pi/2)_\theta)$, $q$ is bounded on $[0, \infty)_z \times [0, \pi/2]_\theta)$, and $q \in C^{1, \beta} ([0, \infty)_z \times [0, \pi/2]_\theta)$ for some positive $\beta$. Then, $\mathscr{L}_0  q = 0$ implies $q = 0$ identically.
\end{theorem}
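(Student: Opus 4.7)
My plan is to translate the problem into an evolutionary problem in a new time variable, reduce it to a scalar Volterra integral equation, and conclude via the positivity of an explicit one-variable quantity that is ultimately verified by symbolic computation.

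\textbf{Reparametrization and semigroup formulation.} I would first introduce $t = \log(1 + 1/z)$, which maps $z \in (0,\infty)$ to $t \in (0,\infty)$ with $z \partial_z = -(1+z)^{-1}\partial_t$. Setting $Q(t, \theta) := q(z(t), \theta)$, the relation $\mathscr{L}_0 q = 0$ becomes the linear inhomogeneous evolution
\begin{equation}
\partial_t Q = \mathcal{L} Q + \tfrac{1+z(t)}{2}\, F_*\, \mathcal{I}(t), \qquad \mathcal{I}(t) = \int_0^t e^{-s}\, \strokedint Q_1(s, \cdot)\, ds,
\end{equation}
and the $C^{1,\beta}$-hypothesis ensures well-defined data $Q(0^+, \cdot) = Q_\infty$. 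Applying Duhamel's formula, pairing the result against the angular average, and exchanging the order of the resulting double integral yields a closed linear Volterra integral equation for a scalar $J(t)$ (essentially $\strokedint Q_1(t, \cdot)$), in which the inhomogeneity and kernel are both built from the single-variable quantity
\begin{equation}
\Upsilon(t) := \strokedint \bigl[e^{t \mathcal{L}} F_*\bigr]_1.
\end{equation}

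\textbf{Uniqueness via positivity of $\Upsilon$.} Because the derived Volterra equation is uniquely solvable, establishing strict positivity of $\Upsilon(t)$ for $t > 0$ suffices to force vanishing of the scalar $J$, and hence of $\mathcal{I}$; this pushes $Q$ into the homogeneous flow $\partial_t Q = \mathcal{L} Q$, and the one-dimensional kernel of $\mathcal{L}$ on $\mathcal{H}^k_\theta$ (from Lemma~5.1 of~\cite{EP2023}), combined with the behavior of $q$ as $z \to 0$, then gives $Q \equiv 0$, i.e.\ $q \equiv 0$.

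\textbf{Symbolic verification of $\Upsilon > 0$.} The crux is therefore verifying the strict positivity of $\Upsilon$. My plan is to derive (from the definition of $\mathcal{L}$) an explicit Volterra integral equation satisfied by $\Upsilon$ itself, expand $\Gamma^*$, $F_*$, and the weight $w$ from Lemma~\ref{lem:weight} as series in $\theta$ (for example in powers of $\sin^2\theta$), and then carry out the computation of the resulting coefficients and kernel as exact symbolic rationals. With enough terms retained, positivity of $\Upsilon$ on a bounded time interval reduces to sign analysis of a high-degree polynomial with rational coefficients, verifiable exactly; the large-$t$ tail is controlled by a spectral-gap estimate in the spirit of Lemma~\ref{lem:spgap}. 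The main obstacle is the slow (polynomial) convergence of the angular series, forced by the fractional regularity of $\Gamma^*$ at $\theta = \pi/2$: many terms must be retained, and this is precisely why floating-point interval arithmetic is replaced by exact symbolic rational computation, keeping the final sign verification rigorous and close in spirit to an analytical proof.
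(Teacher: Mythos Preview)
Your overall architecture --- reduce Theorem~\ref{thm:main} to strict positivity of a one-variable quantity $\Upsilon(t)$, then verify that positivity symbolically --- matches the paper. The gap is in the reduction step.

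\medskip
\textbf{Direction of the semigroup.} Your substitution $t=\log(1+1/z)$ and Duhamel from $t=0$ (i.e.\ $z=\infty$) produce the \emph{growing} semigroup $e^{t\mathcal{L}}$, since $\mathcal{L}$ is accretive; the ``data'' $Q(0^+)=q(z{=}\infty)$ is not guaranteed by the hypotheses (boundedness gives no limit), and even granting it the Duhamel representation is not obviously bounded. The paper instead sets $\eta(z)=\log\frac{z}{1+z}\in(-\infty,0)$ and integrates from $z=0$: accretivity forces $\exp(\eta(z)\mathcal{L})q\to 0$ as $z\to 0$, so no boundary term appears and only the contracting direction $e^{-t\mathcal{L}}$ enters. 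Correspondingly the relevant object is $\Upsilon(t)=\strokedint\exp(-t\mathcal{L})F_*$, not $\exp(+t\mathcal{L})$.

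\medskip
\textbf{How positivity of $\Upsilon$ is actually used.} The sentence ``Because the derived Volterra equation is uniquely solvable, establishing strict positivity of $\Upsilon$ suffices to force vanishing of $J$'' is precisely the missing step. Volterra equations are always uniquely solvable under mild conditions; that alone does not tell you the solution is zero. The paper's mechanism is a \emph{first-zero-crossing contradiction}: after averaging in $\theta$ and one integration by parts one obtains, with $Q=\strokedint q$,
\[
Q(z)+K(z,z)\int_z^\infty\frac{Q}{(1+y)^2}\,dy+\int_0^z K(y,z)\,\frac{Q}{(1+y)^2}\,dy=0,
\qquad
K(w,z)=\int_0^w\frac{1}{2\sigma}\,\Upsilon\bigl(\eta(z)-\eta(\sigma)\bigr)\,d\sigma.
\]
Positivity of $\Upsilon$ makes $K(\,\cdot\,,z)$ increasing in its first argument; assuming $\int_0^\infty Q/(1+y)^2<0$, one evaluates the identity at the first $z_1$ where $Q$ turns non-positive and obtains a sign contradiction. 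This monotonicity argument is exactly where $\Upsilon>0$ enters; invoking generic ``Volterra uniqueness'' bypasses it without a substitute. Once $Q\equiv 0$, the nonlocal term in $\mathscr{L}_0$ drops out and the residual equation $z\partial_z q+\tfrac{1}{1+z}\mathcal{L}q=0$, together with boundedness and accretivity, forces $q\equiv 0$ directly --- no appeal to a ``one-dimensional kernel of $\mathcal{L}$ on $\mathcal{H}^k_\theta$'' is needed (and that is not what the cited lemma provides).

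\medskip
Your final paragraph --- a Volterra equation for $\Upsilon$ itself via the weight of Lemma~\ref{lem:weight}, series expansions of $\Gamma^*$ and $w$, exact-rational sign checks on high-degree polynomials for bounded $t$, and a separate large-$t$ estimate --- is an accurate sketch of the paper's Sections~\ref{sec:volterra}--\ref{sec:shorttime}.
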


First, we deduce Theorem~\ref{thm:inv} from Theorem~\ref{thm:main}.
\begin{proof}[Proof of Theorem~\ref{thm:inv} assuming Theorem~\ref{thm:main}]
    We focus on proving the absence of a non-trival generalized kernel element\footnote{The fact that the kernel is one-dimensional follows from the same reasoning.}. Let $z \p_z f_* \in \text{ker}(\mathfrak{L})$, and suppose that $f^\#$ satisfies 
    $$
    \mathfrak{L}f^{\#} = z \p_z f_*.
    $$
    We define the quantity $q = (q_1, q_2)$ in the following manner. Let $(G^\#_1, P^\#_0, G^\#_2) := \mathcal{B} f^\#$ (where $\mathcal{B}$ was defined in~\cite{EP2023}, Section~2.3), and define\footnote{These quantities arise naturally if one wants to \emph{mod out} the one-dimensional kernel present in $\mathfrak{L}$.}
    \begin{equation}
       Q_1 := G^\#_1 + \frac 1 {B_*} (z\p_z P^\#_0) \Gamma_1^*,\qquad  Q_2:= G^\#_2 - \frac 1 {2B_*^2} z\p_z P^\#_0 \Gamma_2^*, \quad \text{and} \quad  P_0 = P^\#_0.
    \end{equation}
    Here, $B_*^2 = 130$ and $\Gamma_1^*, \Gamma_2^*$ is the angular profile (which satisfies $L_\theta \Gamma^* = 0$). Finally, let
    \begin{equation}
        q_1 : = \frac{(1+z)^2}{z} Q_1 \qquad \qquad q_2 : = \frac{(1+z)^2}{z}Q_2.
    \end{equation}
    Under these conditions, we computed in~\cite{EP2023} (proof of Lemma~5.1) that $q$ satisfies 
    $$
    \scrl_0 q = 0.
    $$ It then follows immediately from the definition of $\scrH^{k}$ and standard Sobolev embeddings that $q$ is in the hypotheses of Theorem~\ref{thm:main} with $k$ replaced by $k-2$. Theorem~\ref{thm:inv} follows.
\end{proof}

\subsection{A sufficient condition for the absence of a generalized kernel element}\label{sec:sufficient}

In this section, we are going to reduce the proof of Theorem~\ref{thm:main} to a statement about the evolution under the semigroup induced by $\mathcal{L}$ of the function $F_*$. More precisely, we have
\begin{lemma}[Reduction to angular averages]\label{lem:first}
    Let $\Upsilon(t) := \strokedint \exp(- t \mathcal{L}) F_* d\theta$. Then Theorem~\ref{thm:main} holds provided that 
    \begin{equation}
         \Upsilon(t) > 0
    \end{equation}
    for all $t \geq 0$.
\end{lemma}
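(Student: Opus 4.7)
The plan is to recast $\mathscr{L}_0 q = 0$ as a forced evolution equation in a new time variable and then, via a backward Duhamel formula, to extract an integral identity for the angular average $\phi(t):=\strokedint q\, d\theta$ in which $\Upsilon$ appears as an integration kernel. Positivity of $\Upsilon$ will then force $\phi\equiv 0$, and this in turn will give $q\equiv 0$.

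First, I introduce $t:=\log(1+1/z)\in[0,\infty)$, under which $z(t)=1/(e^t-1)$, $1+z(t)=e^t/(e^t-1)$, and $z\partial_z=-\frac{1}{1+z}\partial_t$. Multiplying $\mathscr{L}_0 q=0$ by $-(1+z)$ and using these identities turns the equation into
\begin{equation}
\partial_t q \;=\; \mathcal{L} q \;+\; \frac{1+z(t)}{2}\, F_*(\theta)\, \psi(z(t)),\qquad \psi(z):=\int_z^{\infty}\frac{\phi(y)}{(1+y)^2}\, dy.
\end{equation}
A direct change of variables in $\psi$ yields $\psi(z(t))=H(t)$ with $H(t):=\int_0^t \phi(s)\, e^{-s}\, ds$; note that $H(0)=0$, so the source is regular at $t=0$. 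Since $q$ is bounded on $[0,\infty)_t$ and $e^{-\tau\mathcal{L}}$ is the decaying semigroup used to define $\Upsilon$, I invert the above equation by \emph{backward} Duhamel from $t=+\infty$. After the substitution $s=t+u$ and an angular average $\strokedint\cdot\, d\theta$, one arrives at
\begin{equation}\label{eq:keyidentity}
\phi(t) \;+\; \int_0^{\infty} \frac{e^{t+u}\, H(t+u)}{2(e^{t+u}-1)}\, \Upsilon(u)\, du \;=\; 0,\qquad t\geq 0.
\end{equation}

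Since $H$ is determined by $\phi$ through a positive-kernel Volterra operator and the weight $e^{t+u}/(2(e^{t+u}-1))$ is strictly positive, the assumption $\Upsilon>0$ pointwise, combined with~\eqref{eq:keyidentity}, will force $\phi\equiv 0$. Concretely, setting $T_0:=\inf\{t\geq 0:\phi(t)\neq 0\}$, one has $\phi\equiv H\equiv 0$ on $[0,T_0]$; after possibly replacing $q$ by $-q$, the function $H$ has a definite sign on some $(T_0,T_0+\delta)$. A sign and localization analysis of~\eqref{eq:keyidentity} — comparing the strictly signed contribution near $u=0$ against the potentially oscillating tail, and bootstrapping via a Gronwall-type bound on the non-decreasing envelope of $|\phi|$ — then yields a contradiction. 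Once $\phi\equiv 0$, the source in the evolution equation vanishes, and $q$ solves the homogeneous equation $\partial_t q=\mathcal{L}q$; the boundary conditions at $t=0,\infty$ together with the $C^{1,\beta}$ regularity of $q$ and the structural properties of $\mathcal{L}$ from~\cite{EP2023} then force $q\equiv 0$.

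The main obstacle is the sign/localization step for~\eqref{eq:keyidentity}: the integral is global in $u$, so one must show that the strictly positive contribution from the region near $u=0$ dominates a potentially oscillating tail in $H$. A secondary technical point is the justification of the backward Duhamel representation — specifically, that the boundary contribution from $T=+\infty$ vanishes — which relies on quantitative decay of $e^{-\tau\mathcal{L}}$ on the relevant function class, as inherited from~\cite{EP2023}.
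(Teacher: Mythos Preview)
Your change of variables $t=\log(1+1/z)=-\eta(z)$ and the resulting identity~\eqref{eq:keyidentity} are correct and are exactly the paper's Duhamel formula in different coordinates; your ``backward Duhamel from $t=+\infty$'' is precisely the paper's integration from $z=0$, and the vanishing boundary term is the same accretivity observation. So up to the integral identity you and the paper agree.

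The genuine gap is the positivity step. Your proposed ``sign and localization'' plus ``Gronwall on the non-decreasing envelope of $|\phi|$'' does not obviously close: in your variables the equation couples $\phi(t)$ to $H(t+u)$ for all $u\geq 0$, i.e.\ to \emph{future} values of $\phi$, so a forward Gronwall is not available, and the near-$u=0$ contribution need not dominate the tail without further structure. You correctly flag this as the main obstacle but do not resolve it. The paper's key idea, which you are missing, is a single integration by parts in the inner integral: writing
\[
K(w,z)=\int_0^{w}\sigma^{-1}\tfrac12\,\Upsilon(\eta(z)-\eta(\sigma))\,d\sigma,
\]
one obtains
\[
Q(z)+K(z,z)\int_z^\infty\frac{Q}{(1+y)^2}\,dy+\int_0^{z}K(y,z)\frac{Q(y)}{(1+y)^2}\,dy=0.
\]
Positivity of $\Upsilon$ now becomes a \emph{monotonicity} statement, $w\mapsto K(w,z)$ increasing, and the contradiction is immediate at the first zero $z_1$ of $Q$: rewriting gives
\[
\int_0^{z_1}\bigl(K(y,z_1)-K(z_1,z_1)\bigr)\frac{Q(y)}{(1+y)^2}\,dy=-K(z_1,z_1)\int_0^\infty\frac{Q}{(1+y)^2}\,dy,
\]
with the left side nonpositive and the right side positive. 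No localization or tail estimate is needed. Once $Q\equiv\phi\equiv 0$, your own Duhamel representation (with source identically zero) already gives $q\equiv 0$ directly; you do not need to invoke separate structural properties of $\mathcal{L}$ for that last step.
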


\begin{proof}[Proof of Lemma~\ref{lem:first}]
We consider a function $q$ in the assumption of the Theorem which satisfies
$$
z \partial_{z} q+\frac{1}{1+z} \mathcal{L} q+\frac{F_{*}}{2} \int_{z}^{\infty} \strokedint \frac{q}{(1+y)^{2}} d y=0.
$$
Now define $\eta(z)$ by $z \partial_{z} \eta=\frac{1}{1+z}$, that is $\eta(z)=\log \frac{z}{1+z}.$ It follows that
$$
z \partial_{z} \exp (\eta(z) \mathcal{L}) q+\exp (\eta(z) \mathcal{L}) \frac{F_{*}}{2} \int_{z}^{\infty} \strokedint \frac{q}{(1+y)^{2}} d y=0.
$$
\begin{remark} Note that $\eta(z)<0$ and $\eta(0)=-\infty$ while $\eta(\infty)=0$. Since $\mathcal{L}$ is accretive, this means that $\exp (\eta(z) \mathcal{L})$ vanishes at $z=0$.
\end{remark}
Upon integration,
$$
q+ \int_{0}^{z} w^{-1} \exp ((\eta(w)-\eta(z)) \mathcal{L}) \frac{F_{*}}{2} \int_{w}^{\infty} \strokedint \frac{q}{(1+y)^{2}}=0.
$$
Let us define 
$
\Upsilon(t) := \exp (-t \mathcal{L}) F_{*} d\theta,
$
which yields
$$
Q+\int_{0}^{z} w^{-1} \frac{1}{2} \Upsilon(\eta(z)-\eta(w)) \int_{w}^{\infty} \frac{Q}{(1+y)^{2}} d y d w=0.
$$
where $Q:= \strokedint q d\theta$. We define:
$$
K(w, z)=\int_{0}^{w} \sigma^{-1} \frac{1}{2} \Upsilon(\eta(z)-\eta(\sigma)) d \sigma
$$
Recall that $\eta(z)<0$. We obtain:
$$
Q+\int_{0}^{z} \frac{d}{d w}(K(w, z)) \int_{w}^{\infty} \frac{Q}{(1+y)^{2}} d y d w=0.
$$
Integrating by parts we get:
$$
Q+ K(z, z) \int_{z}^{\infty} \frac{Q}{(1+y)^{2}} d y+ \int_{0}^{z} K(y, z) \frac{Q}{(1+y)^{2}} d y d w=0.
$$
By our assumption on positivity of $\Upsilon$, we have that $K(w,z)$ is increasing in $w$. Let us now assume by contradiction\footnote{Note that our assumptions in particular imply that this integral is finite, and without loss of generality we can assume it is positive.} that $\int_0^\infty \frac{Q}{(1+y)^2} dy < 0$. 
Note that, setting $q = \log\Big(\frac{z}{1+z}\frac{1+\sigma}{\sigma} \Big)$, we have
\begin{equation*}
    K(z,z) = \frac 12 \int_0^\infty \Upsilon(q) \frac{1+z}{(1+z)e^q -z}e^q d q.
\end{equation*}

It thus follows that $Q(0) > 0$, therefore $z_1 := \inf \{z \in \R: Q(z)< 0 \} > 0$. Moreover, $z_1$ is finite. Then, computing everything at $z_1$,
\begin{equation}
    \int_0^{z_1} (K(y,z_1) - K(z_1,z_1))\frac{Q(y)}{(1+y)^2} dy =  -\int_0^\infty \frac{Q}{(1+y)^2} dy >0.
\end{equation}
However, the integral on the LHS is negative, since $K(z_1, z_1) \geq K(w,z_1)$ for $w \leq z_1$, contradiction. This concludes the proof of Lemma~\ref{lem:first}.
\end{proof}

It thus remains to show that $\Upsilon(t) > 0$ for all $t \geq 0$. Recalling the form of $\mathcal{L}$, we have that\footnote{Alternatively, $\Upsilon(t) = \frac 1{13}\Big(\strokedint \Theta_1(t) - \int_0^t e^{-t+s} \strokedint \Theta_1(s) ds \Big)$}
\begin{equation}\label{eq:upsilons}
\Upsilon(t) = \frac{1}{13} \mathscr{F}^{-1} \Big(\frac{\xi}{1+\xi} \strokedint \hat \Theta_1 \Big),
\end{equation}
where $ \mathscr{F}^{-1}$ denotes inverse Laplace transform, and $\hat \Theta_1$ denotes the Laplace transform of $\Theta_1$ in time, where we define
$$
\Theta = \exp(- t L_\theta) F^*.
$$
After setting $\gamma = \tan \theta$, the evolution of $\Theta = (\Theta_1, \Theta_2)$ is determined by the following fundamental linear system\footnote{Note the renormalization by a factor of $13$ in the $\Theta_1$ component.} on $(t, \gamma) \in [0,\infty)_t \times [0, \infty)_\gamma$:
\begin{align}
  &\p_t  \Theta_1 + 4 \gamma \p_\gamma  \Theta_1 + 6  \Theta_1 - 13  \Theta_2 = 0 \label{eq:theta1pre}\\
  &\p_t  \Theta_2 + 4 \gamma \p_\gamma  \Theta_2 + 9 \Theta_2 - \frac{10} {1+\gamma^2}( \Theta_1 - \strokedint  \Theta_1) = 0.\label{eq:theta2pre}
\end{align}
together with the initial conditions\footnote{Due to the renormalization of a factor of $13$ in the $\Theta_1$ component, the profile $(\Gamma_1^*, \Gamma_2^*)$ satisfies $\strokedint \Gamma_1 = 26$, and $\tilde{L}_\theta \Gamma^* =0$, with
\begin{align}
\tilde{L}_\theta \Gamma :=  \Big(4 \gamma \p_\gamma \Gamma_1 + 6 \Gamma_1 -  13 \Gamma_2, \quad  4 \gamma \p_\gamma \Gamma_2 + 9 \Gamma_2 - 10 \cos^2 \theta \Big(\Gamma_1 - \strokedint \Gamma_1 \Big)  \Big).
\end{align}}:
$F_* := (2\gamma\p_\gamma\Gamma^*_{1} + \frac 52 \Gamma^*_{1}, 2\gamma\p_\gamma\Gamma^*_{2} + 4\Gamma^*_{2}).$

The aim of the rest of the paper is to prove the following Theorem.

\begin{theorem} Let $\Theta$ satisfy~\eqref{eq:theta1pre}--\eqref{eq:theta2pre} with initial conditions $\Theta(t=0) = F_*$. Then,
    $\strokedint \mathscr{F}^{-1}(\frac{\xi}{1+\xi} \hat \Theta_1)$
    is strictly positive for all $t > 0$.
\end{theorem}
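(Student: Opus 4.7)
The plan is to derive an explicit representation of $\Upsilon(t)$ as an infinite sum of decaying exponentials with symbolically computable rational coefficients, and then certify its positivity by working on partial sums together with a quantitative tail bound.

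First, I would take the Laplace transform of~\eqref{eq:theta1pre}--\eqref{eq:theta2pre} in $t$. The transport fields $4\gamma\p_\gamma$ have characteristics $\gamma(s) = \gamma_0 e^{-4s}$, so each equation becomes a first-order linear ODE in $\gamma$ with algebraic spectral dependence on $\xi$. The only non-locality in $\gamma$ is the coupling through $\strokedint\hat\Theta_1$. I would solve the second equation for $\hat\Theta_2$ by integrating along a characteristic, substitute into the first, and integrate again; the outcome is an explicit formula expressing $\hat\Theta_1(\xi,\gamma)$ in terms of the data $F_*$, the profile $\Gamma^*$, and the scalar $\strokedint\hat\Theta_1(\xi)$. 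Averaging in $\theta$ gives a scalar relation that, after solving, produces a closed expression for $\widehat{\Upsilon}(\xi) = \frac{1}{13}\cdot \frac{\xi}{1+\xi}\strokedint\hat\Theta_1(\xi)$.

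Second, I would substitute the convergent $\theta$-series expansion of $\Gamma^*$ (constructed in~\cite{EP2023}) into this closed expression. Each $\gamma$-integral against rational weights of the form $(1+\gamma^2)^{-\ell}\gamma^k$ along characteristics evaluates to a rational function of $\xi$ whose inverse Laplace transform is a sum of pure exponentials. Collecting terms gives
$$
\Upsilon(t) = \sum_{n\geq 1} \alpha_n\, e^{-\lambda_n t},
$$
with explicit positive $\lambda_n$ and rational $\alpha_n$. Equivalently, $\Upsilon$ satisfies the Volterra integral equation anticipated in the introduction, whose kernel $K$ and forcing $g$ have precisely this exponential-sum structure and can be computed to arbitrary precision in terms of $\Gamma^*$ and the auxiliary weight $w$ of Lemma~\ref{lem:weight}.

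Third, I would certify positivity of $\Upsilon(t)$ on $(0,\infty)$ symbolically. Substituting $u = e^{-\mu t}$ for a suitable $\mu>0$ converts a partial sum $S_N(t)$ into a polynomial in $u$ of high degree with rational coefficients, whose strict positivity on $(0,1]$ is decidable exactly, e.g.\ via Sturm sequences or a Bernstein-basis sign check. The tail $\Upsilon(t) - S_N(t)$ is then controlled using the decay rate of Lemma~\ref{lem:spgap} together with uniform bounds on the series coefficients of $\Gamma^*$, and $N$ is chosen large enough that this tail bound is strictly smaller than the positivity margin obtained for $S_N$.

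The main obstacle, and the reason computer assistance is required, is the slow, only polynomial, convergence of the $\theta$-expansion of $\Gamma^*$ near $\theta = \pi/2$, forced by the fractional regularity of the profile there. Many terms of the expansion must be retained, and the tail estimate must be sharp enough not to swamp the margin of positivity of the leading truncation. Keeping hundreds of rational coefficients in exact symbolic form—rather than using floating-point interval arithmetic—is what makes this quantitative balance feasible while preserving the analytic character of the argument.
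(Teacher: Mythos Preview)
Your reduction to a closed scalar equation for $\strokedint\hat\Theta_1$ is sound and is essentially the duality/Volterra reduction of Section~\ref{sec:volterra}. The gap is in your second and third steps. The closed expression you obtain is $\hat\Upsilon=\hat g/(1+\hat K_2)$, where $\hat K_2$ and $\hat g$ are each infinite sums of simple rational terms with poles at explicit integers (this is the content of Lemma~\ref{lem:w1}). But then the exponents $\lambda_n$ in your claimed representation $\Upsilon(t)=\sum_n\alpha_n e^{-\lambda_n t}$ are the zeros of $1+\hat K_2$, which are \emph{not} explicit: they are not integers, not rational, and not commensurable, so the substitution $u=e^{-\mu t}$ does not produce a polynomial and a Sturm/Bernstein check is unavailable. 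If instead you expand $(1+\hat K_2)^{-1}$ as a Neumann series to recover integer exponents, each $\alpha_n$ becomes itself an infinite series, and your ``tail bound via Lemma~\ref{lem:spgap}'' must now control the profile truncation and the Neumann truncation simultaneously---a coupling you do not address and which in fact fails for large $t$.

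The paper handles this by splitting at $t=\log 4$ and using genuinely different mechanisms on the two pieces. On $[0,\log 4]$ it truncates the Neumann series: because $K_2\ge 0$ and $g\ge 0$, every odd Picard iterate $P_{2k+1}$ is a rigorous lower bound for $\Upsilon$ wherever $\Upsilon>0$, and $P_3$---which \emph{is} a finite sum $\sum c_j e^{-n_j t}$ with integer $n_j$ and symbolic rational $c_j$---is certified positive there. This lower bound goes negative past $t\approx 1.4$, so it cannot be pushed to all of $(0,\infty)$. For $t\ge\log 4$ the paper instead decomposes $\Theta=c\,\Gamma^*+\Theta^{\rm mod}$ with $c$ chosen to annihilate the invariant $\mathcal I$; a direct $L^\infty$ argument then gives $\strokedint\Theta_1^{\rm mod}=O(e^{-4t})$, so $\Upsilon(t)=\bigl(26c+\strokedint\hat\Theta_1^{\rm mod}(-1)\bigr)e^{-t}+o(e^{-t})$, and only the single number $\strokedint\hat\Theta_1^{\rm mod}(-1)$ needs to be bounded---by evaluating the Volterra relation at $\xi=-1$, with no inversion of $1+\hat K_2$ required. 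Your uniform-in-$t$ scheme does not substitute for this split: on the long-time side any fixed Picard truncation eventually turns negative, while on the short-time side the asymptotic $e^{-t}$ mode has not yet separated from the transients.
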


We briefly outline the proof strategy and the organization of the paper.

\subsection{Strategy of proof and organization of the paper}

The starting point of our analysis is the observation that the evolution of the angular average $\strokedint \Theta_1$ solves a Volterra integral equation, whenever $\Theta$ solves the system~\eqref{eq:theta1pre}--\eqref{eq:theta2pre}. This will be shown in Section~\ref{sec:volterra}. The key computation (which was also central to the analysis in our main paper~\cite{EP2023}), is that, upon defining $$\bar \Theta := \Theta(t,\gamma) - \Theta(t, \gamma =0),$$ the quantity
$$
\mathcal{I}(t) := \int_0^\infty (\bar{\Theta}_1 + {\bar \Theta}_2 )\frac{d\gamma}{\gamma^2}
$$
is invariant under the evolution of the system~\eqref{eq:theta1pre}--\eqref{eq:theta2pre}.

Building upon this observation, we argue by duality as follows. For convenience, let us rewrite the system~\eqref{eq:theta1pre}--\eqref{eq:theta2pre} schematically in the form
\begin{equation}\label{eq:locnloc}
\p_t \bar \Theta + L_{\text{loc}} \bar \Theta = L_{\text{nloc}} \bar \Theta,
\end{equation}
where $L_{\text{nloc}}\bar \Theta := \left(\begin{array}{c}
       0\\
     \frac{10\gamma^2}{1+\gamma^2} \strokedint \bar{\Theta}_1
     \end{array} \right)$, and $L_{\text{loc}} := L_\theta - L_{\text{nloc}}$.
     We test the equation~\eqref{eq:locnloc} in $L^2(\gamma^{-2}d\gamma)$ against a time-dependent weight\footnote{Note that this weight differs slightly from what is called $w$ later in the manuscript.} $w = (w_1, w_2)$, and obtain\footnote{Here, $(\cdot, \cdot)$ denotes the $L^2$ inner product with respect to the measure $\gamma^{-2} d\gamma$.}
     \begin{equation}
         \p_t (w, \bar \Theta)  - (\p_t w, \bar \Theta) + (L_{\text{loc}}^\dagger w, \bar \Theta) = (w, L_{\text{nloc}} \bar \Theta).
     \end{equation}
     We finally impose two conditions on the weight. Let $T$ be a non-negative time, then we impose
     \begin{itemize}
         \item $\p_t w = L^\dagger_{\text{loc}}w$, and
         \item $w(T) = (1,1)$.
     \end{itemize}
     These conditions ensure that the following holds:
     \begin{equation}\label{eq:volterraprelim}
     \int_0^T (w, L_{\text{nloc}}\bar \Theta)(s) ds = (w(T), \bar \Theta(T)) - (w(0), \bar \Theta(0)) = \mathcal{I}(T) - (w(0), \bar \Theta(0)),
     \end{equation}
     Since $\mathcal{I}$ is constant in time, the RHS can be computed in terms of the invariant $\mathcal{I}(t = 0)$, and $w$, which satisfies a certain transport PDE. Hence, since $L_{\text{nloc}}$ only depends on $\strokedint \bar{\Theta}_1$, display~\eqref{eq:volterraprelim} gives the required Volterra integral equation. It is then straightforward to recover $ \Theta(t, \gamma =0)$ from $\strokedint {\bar \Theta}_1$, which yields $\strokedint \Theta_1$. Without going into details, by manipulating~\eqref{eq:volterraprelim} and convolving with a certain kernel, we obtain the following schematic equation for $f(t) := \Upsilon(t)$ (which was defined in \eqref{eq:upsilons}):
     \begin{equation}\label{eq:volterraprebetter}
         f(t) + \int_0^t \kappa(t-s) f(s) ds= g(t),
     \end{equation}
     where $\kappa(t)$ is a smooth, positive kernel and $g(t)$ is a positive function.

     In Section~\ref{sec:profile}, we provide a series expansion of the profile $\Gamma^*$, which is useful in our subsequent calculations.

     In Section~\ref{sec:largetime}, we show that $\Upsilon(t)$  is positive for large enough times. First, it turns out that there is a relatively straightforward $L^\infty$ based argument which implies that, if $\mathcal{I} =0$, the evolution of $\Theta$ tends to $0$ as $t \to \infty$. To exploit this fact, we split $F^*$ (the initial data for $\Theta$) into a multiple of the profile $c \Gamma^*$ plus a part $F^{*, \text{mod}}$ which satisfies $\mathcal{I}[F^{*, \text{mod}}] = 0.$ We call the resulting evolution (arising from $F^{*, \text{mod}}$) $\Theta^{\text{mod}}$. We know from the $L^\infty$ argument that $\strokedint \Theta^{\text{mod}}\to 0$ as $t \to \infty$.
     
     At this point, we recall that
     $$
     \Upsilon(t) = \mathscr{F}^{-1}\Big(\frac{\xi}{1+\xi} \strokedint \Theta_1 \Big) = \strokedint \Theta_1(t) - \int_0^t e^{-t+s}\strokedint \Theta_1(s) ds = c e^{-t} \strokedint \Gamma_1^* - e^{-t} \int_0^\infty e^{s} \strokedint \Theta_1^{\text{mod}}(s) + (\text{error}).
     $$
     Here, the (error) term tends to $0$ faster than $e^{-t}$, and we are able to compute explicit bounds on the rate and the constant. Therefore, the only remaining object to compute is the second term in the last display above, i.e. the Laplace transform at $-1$ of $\strokedint \Theta^{\text{mod}}$, which we compute by using the Volterra equation~\eqref{eq:volterraprebetter}. By careful estimates on the $L^\infty$ norm of $\Theta^{\text{mod}}$, and computing a lower bound for the Laplace transform of $\strokedint \Theta_1$ with good precision using symbolic calculations, we obtain that $\Upsilon(t) > 0$ for all $t \geq \log(4)$ (see Figure~\ref{fig:largetime} for a plot of the lower bound for $\Upsilon(t)$). Note that a similar, purely analytical, argument gives $\Upsilon(t) > 0$ for $t \geq 5$. Hence computer assistance (which enables us to obtain very precise bounds) is used only to reach $\log(4)$.

\begin{figure}[H]
    \centering
    \includegraphics[width=0.8\linewidth]{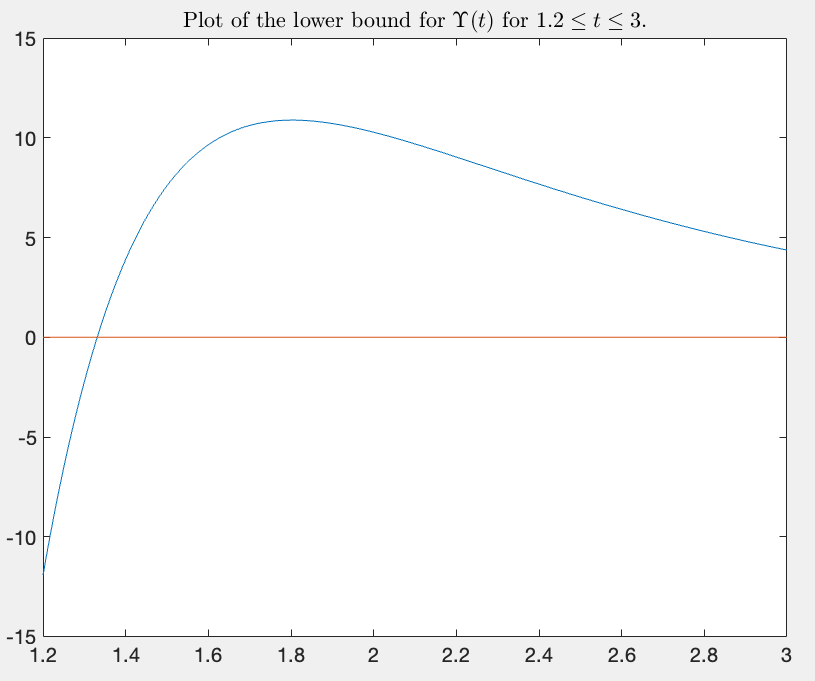}
    \caption{Evolution of the lower bound for $\Upsilon(t)$ for $1.2 \leq t \leq 3$. Note that the lower bound (plotted in blue) becomes positive around time $t = 1.35$, and it stays positive afterwards, also when $t \geq 3$.}
    \label{fig:largetime}
\end{figure}

     It remains to show that $\Upsilon(t) > 0$ for all $t \in [0, \log(4)]$, which we carry out in Section~\ref{sec:shorttime}. To do so, we recall again the Volterra integral equation~\eqref{eq:volterraprebetter}. In this case, we notice that positivity of $g(t)$ and $\kappa(t)$ implies that, as long as $f(t)$ stays positive, the even Picard iterates $P_{2n}(t)$ are upper bounds for $f(t)$, and the odd Picard iterates $P_{2n+1}(t)$ are lower bounds for $f(t)$. We proceed to show that $P_3(t) > 0$ for $t \in [0, \log(4)]$, which concludes the proof (see Figure~\ref{fig:shorttime} for a plot of the lower bound for $P_3(t)$ when $0 \leq t \leq \log(4) + 0.5$). The technical difficulty here is to obtain precise enough bounds on $\kappa(t)$ and $g(t)$. Indeed, these are obtained by expanding each of the corresponding ($\gamma$-dependent) quantities into a global series in powers of $\gamma^2/(1+\gamma^2)$. These series expansions only have polynomial convergence (which is moreover rather slow). Hence, the relevant bounds (for $\kappa$ and $g$) are computed symbolically using a large ($\sim 25$) number of terms.

     \begin{figure}[H]
         \centering
         \includegraphics[width=0.8\linewidth]{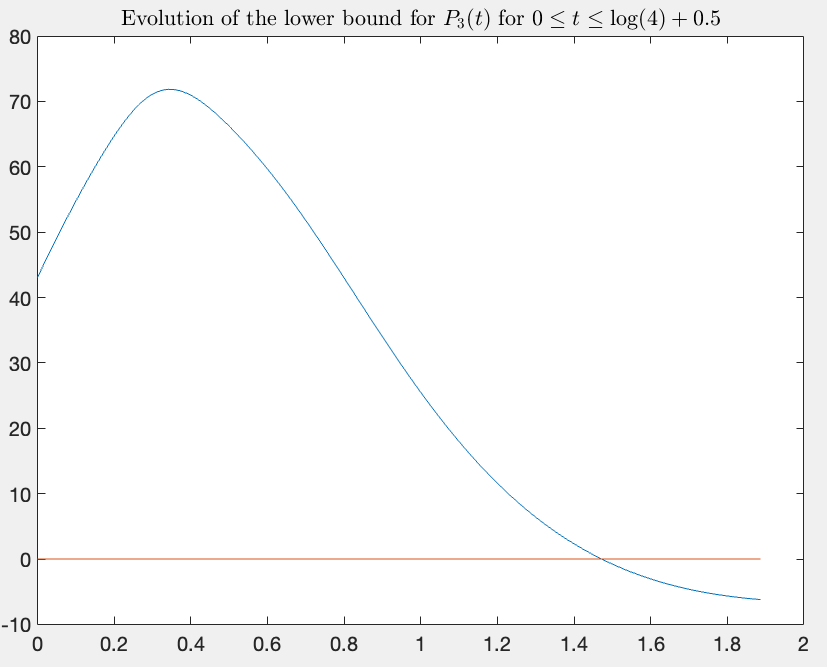}
         \caption{Evolution of the lower bound for $P_3(t)$ for $0 \leq t \leq \log(4) + 0.5$. The lower bound for $P_3(t)$ is in blue, and the orange graph is the constant zero function. Note: the lower bound becomes negative after $t = 1.4$.}
         \label{fig:shorttime}
     \end{figure}

    Finally, the appendix deals with the technical aspects of the proof. In Appendix~\ref{sec:symbolicproof}, we describe in detail the symbolic part of our proof, which is mainly contained in the companion files. In Appendix~\ref{sec:appb}, we prove (again symbolically) the necessary bounds that are then fed into the argument in Sections~\ref{sec:largetime} and Section~\ref{sec:shorttime}. In Appendix~\ref{sec:companion}, 
    we provide a short guide to navigate the companion files. Finally, in Appendix~\ref{sec:appd} we recall some useful facts and lemmas.
    
\subsection{Acknowledgements}
FP thanks Samuel Lanthaler for insightful discussions on the computer assisted part of this work.

\section{Reduction to a Volterra integral equation}\label{sec:volterra}

In this section, we show that the evolution of the angular average $\Upsilon$ is governed by a certain Volterra integral equation. This Volterra equation is instrumental in showing that $\Upsilon(t) > 0$ for time $t \in [\log(4),\infty)$ (since it allows us to compute a certain Laplace transform at $\xi = -1$), and for time $t \in [0,\log(4))$ (since it allows us to compute precisely the evolution up until time $\log(4)$ via Picard iteration). The aforementioned Volterra equation is found through a duality argument, which we describe in the following section.

\subsection{The duality argument}

The aim of this section is to prove a ``duality'' lemma, which will allow us to compute $\Upsilon(t)$ precisely.

First, we introduce a slightly altered version of the fundamental system~\eqref{eq:theta1pre}--\eqref{eq:theta2pre}. We set 
$$\bar \Theta := \Theta - \Theta(\gamma = 0).$$
We have that $\bar \Theta$ satisfies the following fundamental linear system of PDEs:
\begin{align}
  &\p_t \bar \Theta_1 + 4 \gamma \p_\gamma \bar \Theta_1 + 6 \bar \Theta_1 - 13 \bar \Theta_2 = 0 \label{eq:theta1}\\
  &\p_t \bar \Theta_2 + 4 \gamma \p_\gamma \bar \Theta_2 + 9 \bar\Theta_2 - \frac{10} {1+\gamma^2}(\bar \Theta_1 + \gamma^2 \strokedint \bar \Theta_1) = 0.\label{eq:theta2}
\end{align}
together with the initial conditions:
$\bar F_* := (2\gamma\p_\gamma\bar \Gamma_1^* + \frac 52 \bar \Gamma_1^*, 2\gamma\p_\gamma \bar\Gamma_2^* + 4\bar\Gamma_2^*).$ 

From the analysis of the semi-group induced by $L_\theta$ (see~\cite{EP2023}, Section~3), it follows that $\strokedint \Theta_1 (t)$ tends to a finite non-zero limit as $t \to \infty$. It will be convenient, in what follows, to introduce a modified version of $\Theta$ whose evolution under the semigroup induced by $L_\theta$ does tend to $0$ as $t \to \infty$. To that end, we also introduce a modified version of $\Theta$ as follows:
\begin{equation}
    \Theta^{\text{mod}} := \Theta - c \Gamma^*,
\end{equation}
where $c$ is determined such that $ \Theta^{\text{mod}}(t) \to 0$ as $t \to \infty$. This, again according to the results\footnote{Note that the invariant $\mathcal{I}(\Xi)$, due to the renormalization of $\Theta_1$ by $13$, is $\int_0^\infty(\bar{\Xi}_1+\bar{\Xi}_2) \frac{d\gamma}{\gamma^2}$.} in~\cite{EP2023} (Lemma~3.9) is accomplished by choosing $c$ such that the invariant $\mathcal{I}(\Theta^{\text{mod}}(t =0) - c \Gamma^*) = 0$, which yields an exact value of $c = \frac{237}{46}$. We have the following Lemma.

\begin{lemma}\label{lem:weight}
    Let $\Theta = (\Theta_1, \Theta_2)$ solve \eqref{eq:theta1}--\eqref{eq:theta2} with initial conditions $\Theta(t=0) = \Theta^{(0)}$. Then, the following integral equation holds for $\strokedint \bar{\Theta}_1$:
    \begin{equation}\label{eq:masterweight}
    10 \int_0^t \Big(\strokedint \bar \Theta_1(s)d\theta\Big) \kappa(t-s) ds = \frac 2 \pi \mathcal{I}(\Theta^{(0)}) - \frac 2 \pi\int_0^\infty\Big(\bar{\Theta}^{(0)}_1 w_1(t) + \bar{\Theta}^{(0)}_2 w_2(t)  \Big) \frac{d\gamma}{\gamma^2} 
\end{equation}
where 
\begin{align}
    \kappa(r) := \frac 2 \pi \int_0^\infty \Big(\frac{w_2(r)}{1+\gamma^2} \Big) d \gamma, \qquad \qquad \mathcal{I}(\Theta^{(0)}) = \int_0^\infty \Big(\frac{\bar{\Theta}^{(0)}_1}{\gamma^2} + \frac{\bar{\Theta}^{(0)}_2}{\gamma^2} \Big)d\gamma, \label{eq:kappadef}
\end{align}
and $(w_1, w_2)$ satisfy the system (on $(t, \gamma) \in [0,\infty) \times [0, \infty)$)
\begin{align}
    &\p_t w_1 - 4 \gamma \p_\gamma w_1 + 10 w_1 - \frac{10}{1+\gamma^2} w_2 = 0, \label{eq:weight1}\\
    &\p_t w_2 - 4 \gamma \p_\gamma w_2 + 13 w_2 - 13 w_1 = 0. \label{eq:weight2}
\end{align}
with initial conditions $(w_1(t=0), w_2(t=0)) = (1,1)$.
\end{lemma}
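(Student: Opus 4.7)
The plan is to realize~\eqref{eq:masterweight} as the outcome of a duality pairing in the Hilbert space $L^2([0,\infty);\gamma^{-2}d\gamma)$, combined with the time-invariance of the quantity $\mathcal{I}$. First, I would write~\eqref{eq:theta1}--\eqref{eq:theta2} in the schematic form $\p_t \bar\Theta + L_{\text{loc}}\bar\Theta = L_{\text{nloc}}\bar\Theta$ as in~\eqref{eq:locnloc}, with $L_{\text{nloc}}\bar\Theta = (0, \tfrac{10\gamma^2}{1+\gamma^2}\strokedint\bar\Theta_1)$. A short integration-by-parts computation (using that the $L^2(\gamma^{-2}d\gamma)$-adjoint of $4\gamma\p_\gamma$ is $-4\gamma\p_\gamma + 4$) shows that
\[
L_{\text{loc}}^\dagger \zeta = \bigl(-4\gamma\p_\gamma \zeta_1 + 10\zeta_1 - \tfrac{10}{1+\gamma^2}\zeta_2,\; -4\gamma\p_\gamma \zeta_2 + 13\zeta_2 - 13\zeta_1\bigr),
\]
so that the time-reversed weight $v(\tau) := w(t-\tau)$ satisfies $\p_\tau v = L_{\text{loc}}^\dagger v$ with terminal condition $v(t)=(1,1)$. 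Pairing the evolution of $\bar\Theta$ against $v$ in $L^2(\gamma^{-2}d\gamma)$ and integrating from $\tau=0$ to $\tau=t$ yields
\[
\bigl((1,1),\bar\Theta(t)\bigr) - \bigl(w(t),\bar\Theta^{(0)}\bigr) = \int_0^t \bigl(v(\tau), L_{\text{nloc}}\bar\Theta(\tau)\bigr)\, d\tau,
\]
with pairings in $L^2(\gamma^{-2}d\gamma)$.

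Next, I would verify the invariance $\p_t\mathcal{I}(\bar\Theta(t))=0$ directly from~\eqref{eq:theta1}--\eqref{eq:theta2}: integrating each equation against $\gamma^{-2}d\gamma$, the transport terms contribute $+4\int\bar\Theta_j\gamma^{-2}d\gamma$ after integration by parts, and in the second equation the local piece $10\int\tfrac{\bar\Theta_1}{\gamma^2(1+\gamma^2)}d\gamma$ splits via $\tfrac{1}{\gamma^2(1+\gamma^2)} = \tfrac{1}{\gamma^2}-\tfrac{1}{1+\gamma^2}$, so that the $\tfrac{1}{1+\gamma^2}$ portion cancels exactly the nonlocal term $10\,\strokedint\bar\Theta_1\cdot\int\tfrac{d\gamma}{1+\gamma^2}$ on account of $\int_0^\infty\tfrac{\bar\Theta_1}{1+\gamma^2}d\gamma = \tfrac{\pi}{2}\strokedint\bar\Theta_1$. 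Adding the two reduced equations then gives $\p_t\mathcal{I}=0$, hence $((1,1),\bar\Theta(t))_{L^2(\gamma^{-2}d\gamma)} = \mathcal{I}(\Theta^{(0)})$.

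For the right-hand side of the time-integrated identity, since $L_{\text{nloc}}$ acts only in the second component,
\[
\bigl(v(\tau), L_{\text{nloc}}\bar\Theta(\tau)\bigr) = 10\,\strokedint\bar\Theta_1(\tau)\int_0^\infty\tfrac{w_2(t-\tau)}{1+\gamma^2}d\gamma = 5\pi\,\strokedint\bar\Theta_1(\tau)\,\kappa(t-\tau),
\]
by the definition~\eqref{eq:kappadef} of $\kappa$. Multiplying the integrated identity by $2/\pi$ reproduces~\eqref{eq:masterweight} verbatim.

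The main technical obstacle I anticipate is the rigorous justification of the vanishing of boundary contributions in the integrations by parts, both for the adjoint computation and in the conservation argument. At $\gamma=0$ one uses $\bar\Theta(t,\gamma=0)=0$ together with $\bar\Theta=O(\gamma^2)$ near $\gamma=0$ (inherited from the smoothness of $\Theta$ in the original $\theta$ variable), so that $\bar\Theta/\gamma\to 0$; at $\gamma=\infty$, suitable decay of both $\bar\Theta$ and $w$ is required, following from the semigroup analysis of $L_\theta$ in Section~\ref{I-sec:angular} of~\cite{EP2023} together with the assumed finiteness of $\mathcal{I}(\Theta^{(0)})$. Once these a priori bounds are in place, Fubini's theorem handles the interchange of the time and $\gamma$ integrals, and the duality argument proceeds as outlined.
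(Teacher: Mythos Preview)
Your proposal is correct and follows essentially the same duality argument as the paper: multiply~\eqref{eq:theta1} by $\gamma^{-2}w_1$, \eqref{eq:theta2} by $\gamma^{-2}w_2$, sum, integrate in $\gamma$ and in time, and use that $(w_1,w_2)$ satisfy the $L^2(\gamma^{-2}d\gamma)$-adjoint of the local part of the evolution. The paper's proof is a one-sentence sketch of this, while you have carried out the adjoint computation, the invariance check for $\mathcal{I}$, and the boundary-term justification in full; in particular your time-reversal $v(\tau)=w(t-\tau)$ is exactly the device that reconciles the forward initial condition $w(0)=(1,1)$ in the lemma with the terminal condition $w(T)=(1,1)$ used in the strategy outline (as the paper itself notes in a footnote, the two weights differ by this reparametrization).
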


\begin{proof}[Proof of Lemma~\ref{lem:weight}]

The proof is achieved by multiplying~\eqref{eq:theta1} by $\gamma^{-2} w_1$, multiplying~\eqref{eq:theta2} by $\gamma^{-2} w_2$, and summing the resulting equations. Upon integration in $\gamma$ and in time $s$ from $0$ to $t$, using the fact that $(w_1, w_2)$ lie in the kernel of the dual of the local terms in $L_\theta$, we conclude.
\end{proof}

\subsection{A modified kernel and its approximation}
A close analysis of the kernel $\kappa$ in Lemma~\ref{lem:weight} reveals that it is positive, however $w_1(\gamma=0)$ and $w_2(\gamma=0)$ are identically $1$, which is not favorable from the point of view of taking Laplace transforms in time (since, later in the argument, we will require calculating the Laplace transform at $\xi = -1$). Hence, we introduce modified weights
where $\ringw_1$ and $\ringw_2$ satisfy the following:
\begin{align}
    &\p_t \ringw_1 - 4 \gamma\p_\gamma \ringw_1 + 10 \ringw_1 - \frac{10}{1+\gamma^2} \ringw_2 = 130 \exp(-23 t),\label{eq:ringw1}\\
    &\p_t \ringw_2 - 4 \gamma\p_\gamma \ringw_2 + 13 \ringw_2 - 13 \ringw_1 = 130 \exp(-23 t),\label{eq:ringw2}
\end{align}
with initial conditions $\ringw_1(t=0) = - 10 \frac{\gamma^2}{1+\gamma^2}$, $\ringw_2(t=0) = 0$.

Note that the definition of the modified weights $\mathring w_1$ and $\mathring w_2$ is chosen to achieve
\begin{equation}\label{eq:ringwdef}
\mathscr{F}(\mathring{w}_1) = \frac{1}{d_0} \mathscr{F}(w_1) -1, \qquad \qquad \mathscr{F}(\mathring{w}_2) = \frac{1}{d_0} \mathscr{F}(w_2) -1
\end{equation}
Here, $d_0 = \frac{\xi + 23}{(\xi+13)(\xi+10)}$ is the first  coefficient in the series expansion~\eqref{eq:series1} of $\hat w_2$. This allows us to obtain a system for $\mathring{w}_1$ and $\mathring{w}_2$ (equations \eqref{eq:ringw1}--\eqref{eq:ringw2}) with vanishing initial condition at $\gamma = 0$, and decaying forcing on the right hand side; moreover, we prefer to divide by $d_0$ to achieve a simpler expansion for $w_2$, which appears in the kernel $\kappa$ defined above in display \eqref{eq:kappadef}. Finally, recall that, if $g: [0,\infty)$ is a function, we denote $\hat g := \mathscr{F}(g)$ the Laplace transform.

We have the following lemma.

\begin{lemma}\label{lem:w1}
Define a sequence $(c_j(\xi), d_j(\xi))_{j=  0, \ldots}$ of meromorphic functions of $\xi \in \mathbb{C}$ as follows. Let $v_j(\xi) := (c_j(\xi), d_j(\xi))$, and
\begin{equation}
    c_0 = \frac 1{\xi+10}, \qquad d_0  = \frac{\xi+ 23}{(\xi + 13)(\xi+10)}.
\end{equation}
Then, define $v_j$ for $j \in \N$, $j \geq 1$ recursively as
\begin{equation}
    v_j = N_j v_{j-1},
\end{equation}
where $N_j$ is the $2 \times 2$ matrix given by
\begin{equation}
    N_j = \left(
    \begin{array}{cc}
    8j + 10 + \xi     & 0   \\
    -13     & 8j + 13 + \xi
    \end{array}
    \right)^{-1}  \cdot \
    \left(
    \begin{array}{cc}
    8(j-1)     & 10   \\
     0    & 8(j-1)
    \end{array}
    \right).
\end{equation} 
Let $(\mathring{c}_j, \mathring{d}_j)_{k=0, \ldots}$ be defined by $\mathring{c}_0 =\mathscr{F}^{-1}\Big( \frac{c_0}{d_0} -1\Big)$, $\mathring{d}_0 = 0$, and
\begin{equation}
    (\mathring{c}_j, \mathring{d}_j) = \mathscr{F}^{-1}\Big(\frac{1}{d_0} (c_j, d_j)\Big),
\end{equation}
for\footnote{The inverse Laplace transform is taken component-wise here.} $k \geq 1$. Then, each of the functions $\mathring{c}_k, \mathring{d}_k$ decay faster than $\exp(-t)$ as $t \to \infty$, and, defining (whenever $M \in \mathbb{N}$, $N \geq 2$)
    \begin{equation}
        A_M := \sum_{j=1}^M \mathring{c}_j(t)\frac{1}{(1+\gamma^2)^j}, \qquad \qquad B_M := \sum_{j=1}^M \mathring{d}_j(t)\frac{1}{(1+\gamma^2)^j},
    \end{equation}
    the errors $\alpha := \ringw_1 - A_M$, $\beta := \ringw_2 - B_M$ satisfy
\begin{align}
    &\p_t \alpha - 4 \gamma \p_\gamma \alpha + 10 \alpha - \frac{10}{1+\gamma^2} \beta = \frac{1}{(1+\gamma^2)^{M+1}} P_M(t),\\
    &\p_t \beta - 4 \gamma \p_\gamma \beta + 13 \beta - 13 \alpha= \frac{1}{(1+\gamma^2)^{M+1}} Q_M(t),
\end{align}
where $P_M(t) = 8M \mathring{c}_M(t) + 10 \mathring{d}_M(t)$, and $Q_M(t) = 8M \mathring{d}_M(t)$.
\end{lemma}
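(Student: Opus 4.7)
The plan is to apply the Laplace transform in $t$ to the coupled system \eqref{eq:ringw1}--\eqref{eq:ringw2}, expand the transformed unknowns $\hat{w}_i(\xi,\gamma)$ as power series in $\rho := (1+\gamma^2)^{-1}$, and match coefficients order by order. Since $\gamma^2 = (1-\rho)/\rho$, one has the identity $-4\gamma\partial_\gamma \rho^j = 8j(\rho^j - \rho^{j+1})$, and together with the initial data $\ringw_1(0) = -10+10\rho$, $\ringw_2(0) = 0$ and the source $130/(\xi+23)$ produced after Laplace transformation, this forces the series coefficients $\tilde v_j := (\tilde c_j, \tilde d_j)$ to obey, for $j \geq 2$,
\begin{align*}
(\xi + 8j + 10)\tilde c_j &= 8(j-1)\tilde c_{j-1} + 10\tilde d_{j-1},\\
(\xi + 8j + 13)\tilde d_j &= 8(j-1)\tilde d_{j-1} + 13\tilde c_j.
\end{align*}
Direct computation at $j=0,1$ yields $\tilde c_0 = -10/(\xi+23)$, $\tilde d_0 = 0$, and then $\tilde c_1 = 10/(\xi+18)$, $\tilde d_1 = 130/((\xi+18)(\xi+21))$; the inhomogeneity at $j=1$ comes from the $10\rho$ term in the initial data.

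The second step is to reconcile the clean matrix recursion $v_j = N_j v_{j-1}$ (which naively would give $\tilde v_1 = 0$ if initialized at $\tilde v_0$) with this $j=1$ inhomogeneity. The auxiliary initialization $v_0 = (1/(\xi+10),\, d_0)$ with $d_0 = (\xi+23)/((\xi+13)(\xi+10))$ is chosen precisely so that $N_1 v_0 = (10d_0/(\xi+18),\, 130 d_0/((\xi+18)(\xi+21)))$, hence $v_1/d_0 = \tilde v_1$; an induction on $j$ then gives $v_j/d_0 = \tilde v_j$ for all $j \geq 1$. The definition $\mathring c_0 := \mathfrak F^{-1}(c_0/d_0 - 1) = -10 e^{-23 t}$ is rigged analogously to return the $\gamma$-independent piece of $\ringw_1$. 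The decay claim now follows from the rational-function structure: an easy induction on $j$ shows that every pole of $\tilde c_j$, $\tilde d_j$ (for $j \geq 1$) lies in $\{-(8k+10),-(8k+13): 1 \leq k \leq j\}$, while the only pole of $\tilde c_0$ is at $-23$. All of these have real part at most $-10$, so each $\mathring c_j$, $\mathring d_j$ decays at least like $e^{-10 t}$, strictly faster than $e^{-t}$.

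The error equations follow from direct substitution. Plugging $A_M, B_M$ into the left-hand sides of \eqref{eq:ringw1}--\eqref{eq:ringw2} and using $-4\gamma\partial_\gamma \rho^j = 8j(\rho^j - \rho^{j+1})$, the coefficient of $\rho^j$ in the residual for $1 \leq j \leq M$ is exactly the ODE relation $(\mathring c_j)' + (8j+10)\mathring c_j = 8(j-1)\mathring c_{j-1} + 10\mathring d_{j-1}$ (and its analogue for $d$), which is the inverse Laplace transform of the recursion from the first step and therefore vanishes. The only surviving contributions come from the shift $\rho^j \mapsto \rho^{j+1}$ at $j = M$, giving $(8M\mathring c_M + 10\mathring d_M)\rho^{M+1}$ and $8M\mathring d_M \rho^{M+1}$ respectively, which are the claimed $P_M, Q_M$. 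The source $130 e^{-23 t}$ on the right-hand side is absorbed by the $j=0$ piece $\mathring c_0$ of $\ringw_1$, so that $\alpha$, $\beta$ in the lemma should be interpreted as $\ringw_1 - \mathring c_0 - A_M$ and $\ringw_2 - B_M$ (recalling that $\mathring d_0 = 0$).

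The principal obstacle is organizational rather than conceptual: one must carefully track the normalization by $d_0$ in passing between $v_j$ and $\tilde v_j$, and correctly glue the ``off-by-one'' source at $j=1$ with the general recursion. Once this bookkeeping is in place, the decay claim and the error equations both reduce to direct computations.
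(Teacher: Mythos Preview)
Your proof is correct and follows essentially the same route as the paper's: expand the Laplace-transformed weights in powers of $(1+\gamma^2)^{-1}$ and match coefficients. The only difference is that the paper carries out the expansion on $(w_1,w_2)$ (initial data $(1,1)$) and then observes $\hat{\mathring w}_i = \hat w_i/d_0 - 1$, whereas you work directly with $(\mathring w_1,\mathring w_2)$ and recover the clean recursion $v_j = N_j v_{j-1}$ by recognizing that the initialization $v_0 = (c_0,d_0)$ is designed so that $v_j/d_0$ matches your $\tilde v_j$; your version is in fact more explicit on the decay and error claims, and your closing remark that $\alpha$ must really be $\mathring w_1 - \mathring c_0 - A_M$ (so that the $130 e^{-23t}$ source is absorbed) correctly flags a small indexing slip in the statement, consistent with how $\mathring w_1^{(N_4)} = \sum_{j=0}^{N_4}\mathring c_j(1+\gamma^2)^{-j}$ is actually used later in Section~\ref{sec:shorttime}.
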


\begin{proof}[Proof of Lemma~\ref{lem:w1}]
The sequence of Laplace transforms $(c_j, d_j)$ is obtained by expansion of the functions $w_1$ and $w_2$ by matching terms of the same order in inverse powers of $(1+\gamma^2)$ (note that we don't require anything about convergence). We set
\begin{equation}\label{eq:series1}
    \hat w_1 = \sum_{k=0}^M \frac {c_k(\xi)}{(1+\gamma^2)^k}, \quad \hat w_2 = \sum_{k=0}^M \frac {d_k(\xi)}{(1+\gamma^2)^k}.
\end{equation}
Taking (formally) the Laplace transform of system~\eqref{eq:weight1}--\eqref{eq:weight2}
\begin{align}
    &\xi \hat w_1 - 4 \gamma \p_\gamma \hat w_1 + 10 \hat w_1 - \frac{10}{1+\gamma^2} \hat w_2 = 1,\\
    &\xi \hat w_2 - 4 \gamma \p_\gamma \hat w_2 + 13 \hat w_2 - 13 \hat w_1 = 1.
\end{align}
Matching powers of $(1+\gamma^2)^{-j}$ we have, for $j = 0$,
\begin{align}
    &\xi c_0 + 10 c_0  = 1,\label{eq:rec00}\\
    &\xi d_0  + 13 d_0 - 13 c_0 = 1.\label{eq:rec01}
\end{align}
and for $j > 0$,
\begin{align}
    &\xi c_j + 8 j c_j - 8(j-1)c_{j-1} + 10 c_j - 10 d_{j-1} = 0,\label{eq:rec1}\\
    &\xi d_j + 8 j d_j - 8(j-1)d_{j-1} + 13 d_j - 13 c_j = 0.\label{eq:rec2}
\end{align}
This yields the recursion relation for $N_j$. The statements for $(\mathring{c}_j, \mathring{d}_j)$ follow after noticing that
$$
\mathscr{F}(\mathring{w}_1) = \frac{1}{d_0} \mathscr{F}(w_1) -1, \qquad \qquad \mathscr{F}(\mathring{w}_2) = \frac{1}{d_0} \mathscr{F}(w_2) -1
$$
satisfy~\eqref{eq:ringw1}--\eqref{eq:ringw2}, which is a direct computation.
\end{proof} 

We also have the following lemma, which will be instrumental in computing bounds at $\xi = -1$.

\begin{lemma}\label{lem:m1bound}
    When $j \geq 10$, the matrix $N_j(\xi=-1)$ satisfies
    \begin{equation}
        |N_j(\xi=-1)|_\infty \leq \Big(1 - \frac{0.85}{j}\Big).
    \end{equation}
    Here, $|N|_\infty$ denotes the maximum of the row sums of the matrix obtained from $N$ taking absolute values entry-wise.
\end{lemma}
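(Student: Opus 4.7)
The proof is a direct algebraic computation. I would proceed in three steps: first, compute $N_j(-1)$ explicitly; second, exploit non-negativity of its entries to identify $|N_j(-1)|_\infty$ with the larger of the two row sums; third, verify the claimed inequality by reducing to elementary polynomial comparisons in $j$.

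For the first step, I substitute $\xi=-1$ and invert the lower-triangular factor,
\[
\begin{pmatrix} 8j+9 & 0 \\ -13 & 8j+12 \end{pmatrix}^{-1} = \frac{1}{(8j+9)(8j+12)} \begin{pmatrix} 8j+12 & 0 \\ 13 & 8j+9 \end{pmatrix},
\]
and multiply by the second factor to obtain
\[
N_j(-1) = \begin{pmatrix} \dfrac{8(j-1)}{8j+9} & \dfrac{10}{8j+9} \\[0.8em] \dfrac{104(j-1)}{(8j+9)(8j+12)} & \dfrac{130 + 8(j-1)(8j+9)}{(8j+9)(8j+12)} \end{pmatrix}.
\]
Each entry is non-negative for $j \geq 1$, so the absolute values in the definition of $|\cdot|_\infty$ are trivial and $|N_j(-1)|_\infty = \max(R_1(j), R_2(j))$, where a short simplification gives the row sums
\[
R_1(j) = 1 - \frac{7}{8j+9}, \qquad R_2(j) = 1 - \frac{56j + 154}{(8j+9)(8j+12)}.
\]

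The target bound therefore becomes the pair of scalar inequalities $\frac{7}{8j+9} \geq \frac{0.85}{j}$ and $\frac{56j+154}{(8j+9)(8j+12)} \geq \frac{0.85}{j}$; after clearing denominators, each is a polynomial inequality in $j$ of degree at most two with rational coefficients, and hence can be certified on any prescribed range of $j$ by a finite algebraic check.

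The main, rather mild, obstacle is sharpness for the row-$1$ estimate: the ratio $\frac{7j}{8j+9}$ tends to $\frac{7}{8} = 0.875$ as $j \to \infty$, so the gap between the quantity being estimated and the threshold $0.85$ is narrow, and this is precisely what dictates the lower endpoint of the admissible range of $j$. The row-$2$ bound is comfortably slacker (a quadratic whose roots lie well below any moderate value of $j$) and imposes essentially no additional constraint, so the entire argument is driven by the row-$1$ inequality. Once the two polynomial inequalities are verified on $j \geq 10$, the lemma follows.
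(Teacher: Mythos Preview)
Your approach is exactly the paper's: the proof there is recorded only as ``Direct calculation.''  Your explicit form of $N_j(-1)$ and the two row sums
\[
R_1(j)=1-\frac{7}{8j+9},\qquad R_2(j)=1-\frac{56j+154}{(8j+9)(8j+12)}
\]
are correct.

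There is, however, a genuine gap in the final step. You reduce the row-$1$ bound to the scalar inequality $\dfrac{7}{8j+9}\ge \dfrac{0.85}{j}$ and then assert that this ``can be certified on any prescribed range of $j$ by a finite algebraic check,'' without performing the check. If you do perform it, it fails: clearing denominators gives $7j\ge 6.8j+7.65$, i.e.\ $j\ge 38.25$. Equivalently, at $j=10$ one has $R_1(10)=82/89\approx 0.9213$, while $1-0.85/10=0.915$, so the claimed bound is violated throughout $10\le j\le 38$. Your own remark that $\tfrac{7j}{8j+9}\to \tfrac{7}{8}=0.875$ is the warning sign: this ratio is \emph{increasing} in $j$, so the row-$1$ inequality is hardest precisely at the lower endpoint $j=10$, where the ratio is only $70/89\approx 0.786<0.85$.

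The row-$2$ inequality, by contrast, does hold for $j\ge 10$ (at $j=10$ one gets $R_2(10)\approx 0.9128<0.915$). So your scheme is sound in structure, but the row-$1$ polynomial comparison cannot be closed on the stated range with the constant $0.85$; the statement as written appears to require either a larger threshold on $j$ (namely $j\ge 39$) or a smaller constant (any $\alpha\le 70/89$ works for $j\ge 10$). You should flag this discrepancy rather than leave the verification as an unperformed ``finite algebraic check.''
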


\begin{proof}
    Direct calculation.
\end{proof}

Finally, we have the following lemma, whose proof (using symbolic calculations) is contained in the companion file~\texttt{PQ\_sym.m}, and is desribed in Section~\ref{sec:pqproof} below.
\begin{lemma}\label{lem:PQ}
We have $\beta \geq 0$ for all $t \geq 0$, and in addition
$$
P_M(t) - Q_M(t) \geq 0
$$
for all $t \geq 0$ whenever $M = 25$.
\end{lemma}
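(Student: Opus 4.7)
The plan is to reduce both claims to exact symbolic positivity checks on finite sums of exponentials with rational coefficients, leveraging the explicit recursive construction of $(\mathring{c}_j, \mathring{d}_j)$ provided by Lemma~\ref{lem:w1}. These checks are then to be discharged using the symbolic machinery described in Section~\ref{sec:symbolicproof} (cf.\ the companion file \texttt{PQ\_sym.m}).

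First I would compute $(c_j(\xi), d_j(\xi))$ exactly as rational functions of $\xi$ for $j = 0, 1, \ldots, 25$ by iterating $v_j = N_j v_{j-1}$. Since each $N_j$ introduces only the denominator $(\xi + 8j + 10)(\xi + 8j + 13)$, the poles of every $c_j, d_j$ are simple and lie in the explicit real set $\{-(8k+10), -(8k+13) : 0 \leq k \leq j\}$. Dividing by $d_0(\xi) = (\xi + 23)/[(\xi+10)(\xi+13)]$ cancels the two smallest-modulus poles and introduces a single pole at $\xi = -23$. A partial fraction decomposition over $\mathbb{Q}$ then expresses
\[
\mathring{c}_j(t) = \sum_\mu a_\mu^{(j)} e^{-\mu t}, \qquad \mathring{d}_j(t) = \sum_\mu b_\mu^{(j)} e^{-\mu t},
\]
as explicit finite linear combinations of decaying exponentials with rational coefficients, where the exponents $\mu$ come from the pole set above together with $\mu = 23$.

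For the second claim, observe that $P_{25}(t) - Q_{25}(t) = 200\,\mathring{c}_{25}(t) - 190\,\mathring{d}_{25}(t)$ is, by the above, an explicit finite sum $\sum_\mu \rho_\mu e^{-\mu t}$ with rational $\rho_\mu$ and smallest exponent $\mu_{\min} \geq 18$. Substituting $u = e^{-t} \in (0,1]$ and factoring out $u^{\mu_{\min}}$ recasts the positivity statement as non-negativity of a polynomial $P(u) \in \mathbb{Q}[u]$ on $[0,1]$, which can be certified exactly via Sturm's theorem applied to $P$ and its derivative (equivalently, by a Bernstein-basis sign check).

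For the first claim $\beta \geq 0$, I would exploit the cooperative structure of the coupled PDE system for $(\alpha, \beta)$: the matrix of off-diagonal linear coefficients has entries $-10/(1+\gamma^2)$ and $-13$, both non-positive. Integrating along the characteristics $\gamma(t) = \gamma_0 e^{-4t}$ turns the PDE into a linear ODE whose coefficient matrix is Metzler after sign reversal, so its propagator preserves the non-negative cone. Thus $(\alpha, \beta) \geq 0$ is reduced to verifying the non-negativity of the sources $P_M(t)/(1+\gamma^2)^{M+1}$ and $Q_M(t)/(1+\gamma^2)^{M+1}$, as well as of the initial data $\alpha(0,\gamma), \beta(0,\gamma)$. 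Each of these auxiliary inequalities is a polynomial inequality in $u = e^{-t}$ or in $s = 1/(1+\gamma^2) \in (0,1]$ with exact rational coefficients, and is verifiable by the same Sturm-based procedure. If the naive splitting does not yield the right signs at $t = 0$, the argument can be rescued by absorbing any sign-violating initial contribution into an explicit particular solution computed symbolically.

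The main obstacle is the sheer size of the symbolic expressions: twenty-five iterations of a $2 \times 2$ linear-in-$\xi$ matrix recursion produce rational functions whose partial-fraction decomposition involves roughly fifty exponential modes with rational coefficients of substantial bit-length. The feasibility of the verification rests on keeping every intermediate quantity as an exact symbolic fraction --- the methodological stance advertised in the introduction --- and only at the very end reducing to a small handful of polynomial positivity checks on $[0,1]$. This is precisely the computation carried out in \texttt{PQ\_sym.m}.
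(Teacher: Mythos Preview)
Your approach is essentially the same as the paper's: compute $\mathring{c}_j, \mathring{d}_j$ as explicit finite sums of decaying exponentials with rational coefficients via partial fractions, substitute $u = e^{-t}$ to reduce $P_M - Q_M \geq 0$ (and the auxiliary $P_M \geq 0$, $Q_M \geq 0$ needed for $\beta \geq 0$ via the cooperative structure, with trivial initial data for $M \geq 2$) to polynomial positivity on $[0,1]$, and certify the latter by exact symbolic arithmetic in \texttt{PQ\_sym.m}. The only difference is the certification step itself: you propose Sturm's theorem or a Bernstein-basis check, whereas the paper splits the resulting polynomial $B$ into its positive- and negative-coefficient parts $\texttt{pospoly} + \texttt{negpoly}$ and uses the monotonicity of each on a grid of step $1/100$ --- both are valid exact methods.
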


\subsection{The main Volterra integral equation}
To conclude this section, we find the main Volterra integral equation satisfied by $\Upsilon(t)$. This is achieved by taking the Laplace transform of equation~\eqref{eq:masterweight}. These considerations will be fundamental in later sections~\ref{sec:largetime} and~\ref{sec:shorttime}.

We let the kernel $$\hat{K}_1 := \frac{(\xi+13)(\xi+10)}{\xi+23}\hat{\kappa}(\xi),$$ where, as usual, $\hat{\kappa}$ denotes the Laplace transform of $\kappa$, and $\kappa$ was defined in display~\eqref{eq:kappadef}. By display~\eqref{eq:ringwdef}, $$K_1 =  \delta_{t=0} + K_{2},$$ where $K_2$ is a positive kernel\footnote{It is not difficult to see inductively that $d_j(t)\geq 0$ for all $t \geq 0$.}. More precisely, $$K_2=\frac{2}\pi \int_0^\infty \ringw_2 \frac{d\gamma}{(1+\gamma^2)},$$ where $\mathring w_1, \mathring w_2$ solve~\eqref{eq:ringw1}--\eqref{eq:ringw2}
with initial conditions $(w_1, w_2) = (-10 \gamma^2/(1+\gamma^2), 0)$.

Upon integration of equations~\eqref{eq:theta1pre}--\eqref{eq:theta2pre} with initial conditions $\Theta(t=0) = F^*$, it is straightforward to see that\footnote{With a slight abuse of notation, we set $\hat \Upsilon :=\frac{\xi}{1+\xi} \strokedint \hat \Theta_1$.}
\begin{align}
    &\hat{\Upsilon}(\xi) =  \frac{\xi}{\xi+1}\Big(\frac{1}{\xi+6} F_{*,1}(0) + \frac {13}{(\xi+6)(\xi+9)} F_{*,2}(0)\Big)+\frac{\xi}{\xi+1}\Big(1-\frac {130}{(\xi+6)(\xi+9)}\Big) \strokedint \mathscr{F}(\bar{\Theta}_1).
\end{align}
Multiplying both sides by $\hat K_1$, and using \eqref{eq:masterweight}, we obtain
\begin{equation}
\begin{aligned}
&\hat K_1 \hat \Upsilon(\xi) = \hat K_1 \frac{\xi}{\xi+1}\Big(\frac{1}{\xi+6} F_{*,1}(0) + \frac {13}{(\xi+6)(\xi+9)} F_{*,2}(0)\Big)\\
&\quad +\frac{\xi}{\xi+1}\Big(1-\frac {130}{(\xi+6)(\xi+9)}\Big) \frac{(\xi+10)(\xi+13)}{\xi+23} \frac 1 {10} \frac 2 \pi \Big(\frac{\mathcal{I}}{\xi} - \int\Big(H_1 \hat w_1 + H_2 \hat w_2  \Big) d\gamma\Big) 
\end{aligned}
\end{equation}
This yields
\begin{equation}\label{eq:masterrhs}
\begin{aligned}
&\hat K_1 \hat \Upsilon(\xi)= \hat K_1 \frac{\xi}{\xi+1}\Big(\frac{1}{\xi+6} F_{*,1}(0) + \frac {13}{(\xi+6)(\xi+9)} F_{*,2}(0)\Big)\\
&\quad +\frac{\xi}{\xi+1}\Big(1-\frac {130}{(\xi+6)(\xi+9)}\Big) \frac 1 {10} \frac{2}\pi \Big( \frac{130}{\xi(\xi+23)} \mathcal{I} - \int\Big(H_1 \hat{\mathring{w}}_1 + H_2 \hat{\mathring{w}}_2  \Big) d\gamma\Big).
\end{aligned}
\end{equation}
Here, $\mathcal{I} = \int_0^\infty (H_1 + H_2) d\gamma$ is the invariant, where we used $H := \frac{\bar{F}^*}{\gamma^2}$.

We collect the above observations in the following lemma, which is obtained by inverse Laplace transform of equation~\eqref{eq:masterrhs}.
\begin{lemma}\label{lem:volterra}
    The function $\Upsilon(t)$ satisfies the Volterra integral equation
    \begin{equation}\label{eq:mainvolte}
        \Upsilon(t) + \int_0^t K_2(t-s) \Upsilon(s) ds = g(t),
    \end{equation}
    where
    \begin{align}
        &K_2(t) = \frac{2}\pi \int_0^\infty \ringw_2 \frac{d\gamma}{(1+\gamma^2)},\\
        &g(t) = \mathscr{F}^{-1}\Big(\hat K_1 \frac{\xi}{\xi+1}\Big(\frac{1}{\xi+6} F_{*,1}(0) + \frac {13}{(\xi+6)(\xi+9)} F_{*,2}(0)\Big)\\
&\qquad \qquad \quad +\frac{\xi}{\xi+1}\Big(1-\frac {130}{(\xi+6)(\xi+9)}\Big) \frac 1 {10} \frac{2}\pi \Big( \frac{130}{\xi(\xi+23)} \mathcal{I} - \int\Big(H_1 \hat{\mathring{w}}_1 + H_2 \hat{\mathring{w}}_2  \Big) d\gamma\Big)\Big).
    \end{align}
\end{lemma}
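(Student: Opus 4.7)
The plan is to obtain the Volterra equation as the inverse Laplace transform of the already-derived identity~\eqref{eq:masterrhs}. The key structural observation, which makes the inversion immediate, is that the multiplier $\hat{K}_1$ on the left-hand side admits the clean decomposition $\hat{K}_1(\xi) = 1 + \hat{K}_2(\xi)$ in Laplace variables; equivalently, $K_1 = \delta_{t=0} + K_2$ as distributions in time. Once this splitting is established, the left-hand side $\hat{K}_1 \hat{\Upsilon}$ inverts by the convolution theorem to precisely $\Upsilon(t) + \int_0^t K_2(t-s)\Upsilon(s)\,ds$, and one simply \emph{defines} $g(t)$ to be the inverse Laplace transform of the right-hand side of~\eqref{eq:masterrhs}.

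To verify the decomposition $\hat{K}_1 = 1 + \hat{K}_2$, I would recall that by definition $\hat{K}_1(\xi) = \frac{(\xi+13)(\xi+10)}{\xi+23}\hat{\kappa}(\xi) = \frac{1}{d_0(\xi)}\hat{\kappa}(\xi)$, with $d_0(\xi) = (\xi+23)/[(\xi+13)(\xi+10)]$ as in Lemma~\ref{lem:w1}. Combining the definition of $\kappa$ in~\eqref{eq:kappadef} with the identity $\hat{w}_2/d_0 = 1 + \hat{\ringw}_2$ proved in the course of Lemma~\ref{lem:w1} (the Laplace-transform avatar of $\tilde{w}_2 = \delta_{t=0} + \ringw_2$ in~\eqref{eq:ringw2def}), one computes
\begin{equation}
\hat{K}_1(\xi) = \frac{2}{\pi}\int_0^\infty \frac{\hat{w}_2(\xi,\gamma)/d_0(\xi)}{1+\gamma^2}\,d\gamma = \frac{2}{\pi}\int_0^\infty \frac{d\gamma}{1+\gamma^2} + \frac{2}{\pi}\int_0^\infty \frac{\hat{\ringw}_2(\xi,\gamma)}{1+\gamma^2}\,d\gamma = 1 + \hat{K}_2(\xi),
\end{equation}
using $\frac{2}{\pi}\int_0^\infty d\gamma/(1+\gamma^2) = 1$ in the last step.

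With this splitting in hand, identity~\eqref{eq:masterrhs} reads $\hat{\Upsilon} + \hat{K}_2\hat{\Upsilon} = \hat{g}$, where $\hat{g}(\xi)$ is the full right-hand side of~\eqref{eq:masterrhs}. Applying the inverse Laplace transform termwise, the convolution theorem converts the second summand on the left into $\int_0^t K_2(t-s)\Upsilon(s)\,ds$, which gives~\eqref{eq:mainvolte}. The only genuine technical point is to ensure that all the inverse Laplace transforms involved are well-defined: the meromorphic structure of $c_j, d_j$ established in Lemma~\ref{lem:w1} and the decay statements for $\mathring{c}_j, \mathring{d}_j$ are exactly what guarantees that $K_2$ is a locally integrable kernel and that $g$ is a locally integrable function, so that the formal manipulations above become rigorous pointwise identities for all $t \geq 0$.
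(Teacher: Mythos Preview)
Your proposal is correct and follows essentially the same approach as the paper: the lemma is stated immediately after the paper has already noted that $K_1 = \delta_{t=0} + K_2$ (just before the lemma statement), and the paper simply says the result ``is obtained by inverse Laplace transform of equation~\eqref{eq:masterrhs}.'' Your explicit verification of $\hat{K}_1 = 1 + \hat{K}_2$ via $\frac{2}{\pi}\int_0^\infty \frac{d\gamma}{1+\gamma^2} = 1$ spells out a step the paper leaves implicit, but the overall argument is identical.
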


\begin{remark}\label{rmk:forminusone} By the same reasoning, we also have the following equation for the modified quantities:
\begin{equation}\label{eq:mastermod}
\begin{aligned}
&\hat K_1 \strokedint \hat \Theta^{\text{mod}}_1 = \hat K_1 \Big(\frac{1}{\xi+6} F_{*,1}^{\text{mod}}(0) + \frac {13}{(\xi+6)(\xi+9)} F_{*,2}^{\text{mod}}(0)\Big)\\
&\qquad \qquad -\Big(1-\frac {130}{(\xi+6)(\xi+9)}\Big) \frac 1 {10}\frac 2 \pi \int\Big(H^{\text{mod}}_1 \hat{\mathring{w}}_1 + H^{\text{mod}}_2 \hat{\mathring{w}}_2  \Big) d\gamma.
\end{aligned}
\end{equation}
Here, $\mathcal{I} = \int_0^\infty (H^{\text{mod}}_1 + H^{\text{mod}}_2) d\gamma$ is the invariant, with $H^{\text{mod}}:= \frac{\bar{F}^{*, \text{mod}}}{\gamma^2}$.
\end{remark}

This collects all the relevant information required on the weights $\mathring{w}_1$ and $\mathring{w}_2$ and the Volterra integral equation. We turn to the precise analysis of the profile $\Gamma^*$.

\section{Properties of the profile}\label{sec:profile}

In this section, we derive a series expansion for the profile $\Gamma$ in powers of $\frac{\gamma^2}{1+\gamma^2}$, and we collect some explicit calculations for quantities associated with the profile.

\subsection{Series expansion of the profile}\label{sec:series}
We seek a series expansion of the profile $\Gamma^*$. We recall the profile equation (suppressing the asterisk for ease of notation):
\begin{align}\label{eq:funde1}
    &4 \gamma \p_\gamma \bar \Gamma_1 + 6 \bar \Gamma_1 - 13 \bar \Gamma_2 = 0\\
    &4 \gamma \p_\gamma \bar \Gamma_2 + 9 \bar\Gamma_2 - \frac{10} {1+\gamma^2}(\bar \Gamma_1 + \gamma^2 \strokedint \bar \Gamma_1) = 0.\label{eq:funde2}
\end{align}
We let $A = \strokedint \bar \Gamma_1$.

\begin{remark}
Since $\strokedint \Gamma_1 = 2 \times 13 = 26,$ we have $A = - \frac{351}{19}$.
\end{remark}

We have the following Lemma.

\begin{lemma}\label{lem:profile}
    There exist two sequences of real numbers $a_k$, $b_k$ whose associated series converge absolutely, and such that, defining
    \begin{equation}
        \bar{\Gamma}_1 := \sum_{k =0}^\infty a_k \Big(\frac{\gamma^2}{1+\gamma^2}\Big)^k, \qquad \text{and} \qquad \bar{\Gamma}_2 := \sum_{k =0}^\infty b_k \Big(\frac{\gamma^2}{1+\gamma^2}\Big)^k,
    \end{equation}
    the functions $\bar{\Gamma}_1$ and $\bar{\Gamma}_2$ solve~\eqref{eq:funde1}--\eqref{eq:funde2}, with $\strokedint \bar{\Gamma}_1 = A$.
\end{lemma}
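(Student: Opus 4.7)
The plan is to pass to the variable $u := \gamma^2/(1+\gamma^2) \in [0,1)$, plug the ansatz into \eqref{eq:funde1}--\eqref{eq:funde2} and match powers of $u$, derive a two-term linear recursion for $(a_k, b_k)$, and finally bound the associated transfer matrix uniformly in $k$ in order to conclude absolute convergence on $[0, \infty)_\gamma$.

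\textbf{Derivation of the recursion.} Using $\gamma \partial_\gamma u = 2 u(1-u)$, $1/(1+\gamma^2) = 1-u$, and $\gamma^2/(1+\gamma^2) = u$, the system \eqref{eq:funde1}--\eqref{eq:funde2} rewrites on $[0,1]$ as the Fuchsian system
\begin{align*}
8 u(1-u) \bar{\Gamma}_1' + 6 \bar{\Gamma}_1 - 13 \bar{\Gamma}_2 &= 0,\\
8 u(1-u) \bar{\Gamma}_2' + 9 \bar{\Gamma}_2 - 10(1-u) \bar{\Gamma}_1 - 10 u A &= 0,
\end{align*}
with regular singular points at both endpoints. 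Substituting $\bar{\Gamma}_1 = \sum_{k \geq 0} a_k u^k$ and $\bar{\Gamma}_2 = \sum_{k \geq 0} b_k u^k$ and collecting powers of $u$, the $k = 0$ identity forces $a_0 = b_0 = 0$ (consistent with $\bar{\Gamma}_i(\gamma=0)=0$), the $k = 1$ identity is a $2 \times 2$ linear system of determinant $108$ solved by $(a_1, b_1) = (65 A/54, 35 A/27)$, and for every $k \geq 2$ one obtains
\begin{align*}
(8k+6) a_k - 13 b_k &= 8(k-1) a_{k-1}, \\
(8k+9) b_k - 10 a_k &= 8(k-1) b_{k-1} - 10 a_{k-1},
\end{align*}
whose determinant $64 k^2 + 120 k - 76 > 0$ defines, via inversion, a transfer matrix $M_k$ with $(a_k, b_k) = M_k (a_{k-1}, b_{k-1})^{\top}$, uniquely determining the sequences.

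\textbf{Boundedness and convergence.} An explicit calculation gives
\begin{equation*}
M_k = \frac{1}{64k^2 + 120k - 76} \begin{pmatrix} 64 k^2 + 8k - 202 & 104(k-1) \\ -140 & 64 k^2 - 16 k - 48 \end{pmatrix},
\end{equation*}
and for every $k \geq 2$ the two row-sums of the absolute values of entries of $M_k$ evaluate to $(64 k^2 + 112 k - 306)/(64k^2 + 120k - 76)$ and $(64k^2 - 16k + 92)/(64k^2 + 120k - 76)$ respectively, both strictly less than $1$ (equivalent to the trivial inequalities $8k + 230 > 0$ and $136 k > 168$). Therefore $\|M_k\|_\infty \leq 1$ for all $k \geq 2$, so $\max(|a_k|, |b_k|) \leq \max(|a_1|, |b_1|)$ uniformly in $k$. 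In particular, for every $\gamma \in [0,\infty)$ one has $u < 1$ and
\begin{equation*}
\sum_{k \geq 0} |a_k| u^k + \sum_{k \geq 0} |b_k| u^k \leq \frac{2\max(|a_1|, |b_1|)}{1 - u} < \infty,
\end{equation*}
so the two series converge absolutely. Term-by-term differentiation is then justified, so the resulting functions solve the Fuchsian system by construction; combined with the uniqueness of the angular profile in $\mathcal{H}^k_\theta$ established in~\cite{EP2023}, this identifies the sum with the barred profile $\bar{\Gamma}^*$ and in particular gives $\strokedint \bar{\Gamma}_1 = A$.

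\textbf{Main obstacle.} The only step that is not entirely routine is the row-sum bound $\|M_k\|_\infty \leq 1$. Since the entries of $M_k$ tend to $\{1, 0\}$ as $k \to \infty$, the bound is a priori delicate; structurally, it reflects the ``critical'' cancellation $\mathrm{tr}\, M_k = 2 - \tfrac{31}{8k} + O(k^{-2})$, $\det M_k = 1 - \tfrac{31}{8k} + O(k^{-2})$, which pins both eigenvalues of $M_k$ inside the closed unit disk. In practice the verification reduces to checking the two explicit polynomial inequalities above, which is immediate.
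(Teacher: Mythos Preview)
Your derivation of the recursion is correct and coincides with the paper's (the paper expands in powers of $1/(1+\eta^2)$ with $\eta=\gamma^{-1}$, which is your variable $u$). The gap is in the convergence step. The row-sum bound $\|M_k\|_\infty\le1$ yields only \emph{bounded} coefficients; even the sharper form $\|M_k\|_\infty\le 1-c/k$ implicit in your two inequalities gives at best $c=1/8$ asymptotically, since the first row sum equals $1-(8k+230)/(64k^2+120k-76)\sim 1-1/(8k)$, hence $|a_k|,|b_k|=O(k^{-1/8})$, which is not summable. But the paper reads ``converge absolutely'' as $\sum_k|a_k|<\infty$ and $\sum_k|b_k|<\infty$: its proof speaks of uniform convergence, and the explicit decay $|v_k|_2\lesssim k^{-1.1}$ is used verbatim afterward (in the $L^\infty$ estimates of Appendix~\ref{sec:appb} and the tail bounds of Section~\ref{sec:laplacem1}). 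To obtain summable decay the paper bounds the \emph{spectral} norm instead, showing $|W_{k+1}|_2\le 1-1.1/k$ for $k\ge4$ (Lemma~\ref{lem:spgap}), which via Lemma~\ref{lem:prodottino} gives $|v_k|_2\le (k/M)^{-1.1}|v_M|_2$. The point is that $M_k$ is far from normal---its $(1,2)$ entry is of order $1/k$ while its $(2,1)$ entry is of order $1/k^2$---so the $\ell^\infty$ operator norm substantially overestimates the true contraction rate, and the Euclidean norm is genuinely needed.

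A smaller issue: for $\strokedint\bar\Gamma_1=A$ you appeal to the uniqueness of the profile in $\mathcal H^k_\theta$ from~\cite{EP2023}, but placing your series in that space already requires the summability you have not established, so the argument is circular. The paper instead verifies the identity directly: add the two profile equations, divide by $\gamma^2$, and integrate over $\gamma\in(0,\infty)$; the transport and zero-order terms combine into a total derivative, and what remains is exactly $\strokedint\bar\Gamma_1=A$. This is the same ``invariant'' computation underlying $\mathcal I$ elsewhere in the paper.
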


\begin{remark}
    Note that, using this series expansion, we can obtain an alternative proof of the existence of the angular profile $\Gamma_*$, which is the content of Lemma 2.5 in our main paper~\cite{EP2023}.
\end{remark}

\begin{proof}[Proof of Lemma~\ref{lem:profile}]
We find a solution to the above equations as a series in $1/(\eta^2+1)$, where $\eta = \gamma^{-1}$. We start by solving:
\begin{align}
    &-4 \eta \p_\eta \bar \Gamma_1 + 6 \bar \Gamma_1 - 13 \bar \Gamma_2 = 0\label{eq:aa1}\\
    &-4 \eta \p_\eta \bar \Gamma_2 + 9 \bar\Gamma_2 - \frac{10} {1+\eta^2}(\eta^2 \bar \Gamma_1 + A) = 0. \label{eq:aa2}
\end{align}
We then write (formally)
\begin{align}
    \bar \Gamma_1 = \sum_{k \geq 0} \frac{a_k}{(1+\eta^2)^k}, \qquad \qquad 
    \bar \Gamma_2 = \sum_{k \geq 0} \frac{b_k}{(1+\eta^2)^k}.
\end{align}
We first note:
$$
\eta \p_\eta ((1+\eta^2)^{-k}) = - \frac{2k}{(1+\eta^2)^k} + \frac{2k}{(1+\eta^2)^{k+1}}
$$
Matching powers, this implies the following system for $a_k, b_k$. First of all, $a_0 = b_0 =0$. Moreover,
\begin{align}
    &8 a_1 + 6 a_1 - 13b_1 = 0,\\
    &8 b_1 + 9 b_1 - 10 a_1 - 10 A = 0.
\end{align}
That is, $a_1 = \frac{13}{14} b_1$ and $b_1 = \frac{35}{27}A.$ For $k \geq 2$,
\begin{align}
    &8k a_k - 8(k-1) a_{k-1} + 6 a_k - 13 b_k = 0\\
    &8k b_k - 8(k-1) b_{k-1} + 9 b_k - 10 a_k + 10 a_{k-1} = 0.
\end{align}
Writing $v_k := (a_k, b_k)$,
\begin{equation}
    M_k v_k = N_k v_{k-1},
\end{equation}
where
$$
M_k := \left(\begin{array}{cc}
 8k+6    & -13 \\
-10     & 8k+9
\end{array}\right)
\qquad 
\text{and}
\qquad
N_k := \left(\begin{array}{cc}
 8(k-1)    & 0 \\
-10     & 8(k-1)
\end{array}\right)
$$
We have that
\begin{equation}\label{eq:W}
W_k := M_k^{-1}N_k = \left(\begin{array}{cc}
\frac{32 k^{2}+4 k-101}{32 k^{2}+60 k-38} & \frac{26 k-26}{16 k^{2}+30 k-19} 
\\
 -\frac{35}{16 k^{2}+30 k-19} & \frac{16 k^{2}-4 k-12}{16 k^{2}+30 k-19} 
\end{array}\right)
\end{equation}
Moreover, $v_k = (a_k, b_k)$ are obtained from $v_{k-1}$ by $v_k = W_k v_{k-1},$
where $W_k$ is given by formula~\eqref{eq:W}. Lemma~\ref{lem:spgap}, combined with lemma~\ref{lem:prodottino} then gives the claim about uniform convergence. Finally, the fact that $\Gamma$ is a solution to~\eqref{eq:funde1}--\eqref{eq:funde2} follows from the fact that the series converges geometrically at each point $\gamma \in \mathbb{R}$, and the fact that $A = \strokedint \bar \Gamma_1$ follows from summing~\eqref{eq:aa1} to \eqref{eq:aa2}, multiplying the result by $\gamma^{-2}$, and integrating the resulting expression\footnote{This is the calculation which yields the ``invariant''.} from $\gamma = 0$ to $\infty$.
\end{proof}

For a 2D vector, we now denote $|v|_2$ the $\ell^2$ norm. We clearly have that
\begin{equation}
    |v_k|_2 \leq |W_k|_2 |v_{k-1}|_2,
\end{equation}
where $|M|_2$ denotes the spectral norm of $M$. We have the following Lemma.

\begin{lemma}\label{lem:spgap}
For $k \geq 4$, $|W_{k+1}|_2 \leq 1 - \frac{1.1}{k}$. 
\end{lemma}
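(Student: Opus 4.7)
The plan is to verify the bound by direct computation, exploiting the fact that $W_{k+1}$ is an explicit $2\times 2$ matrix whose entries are rational functions of $k$. Since $|W_{k+1}|_2^2$ is the largest eigenvalue of the symmetric matrix $W_{k+1}^T W_{k+1}$, and for any $2\times 2$ symmetric positive semidefinite matrix $S$ we have $\lambda_{\max}(S) = \tfrac12\bigl(\operatorname{tr} S + \sqrt{(\operatorname{tr} S)^2 - 4 \det S}\bigr)$, the claim reduces to the scalar inequality
\begin{equation}
\operatorname{tr}(W_{k+1}^T W_{k+1}) + \sqrt{(\operatorname{tr}(W_{k+1}^T W_{k+1}))^2 - 4 (\det W_{k+1})^2} \;\leq\; 2\left(1 - \frac{1.1}{k}\right)^2.
\end{equation}

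First I would compute $\operatorname{tr}(W_{k+1}^T W_{k+1})$ (simply the sum of the squares of the four entries) and $\det W_{k+1}$ as rational functions of $k$ with explicit polynomial numerators and denominators. Isolating the square root, squaring both sides, and clearing denominators then reduces the claim to the positivity of a single polynomial $P(k)$ with integer coefficients of moderate degree.

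To establish $P(k) \geq 0$ for $k \geq 4$, I would combine two ingredients. First, an asymptotic expansion: $W_{k+1} = I - E_\infty/k + O(k^{-2})$ with $E_\infty = \begin{pmatrix} 7/4 & -13/8 \\ 0 & 17/8 \end{pmatrix}$, read off by expanding each entry of $W_{k+1}$ in inverse powers of $k$. Then $|W_{k+1}|_2 = 1 - \lambda_{\min}\bigl((E_\infty + E_\infty^T)/2\bigr)/k + O(k^{-2})$. Direct diagonalization of the symmetrization $(E_\infty + E_\infty^T)/2 = \begin{pmatrix} 7/4 & -13/16 \\ -13/16 & 17/8 \end{pmatrix}$ yields the characteristic polynomial $\lambda^2 - (31/8)\lambda + 783/256$, whose smaller root is $(31 - \sqrt{712})/16 \approx 1.104$. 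Since this sharp constant exceeds $1.1$, the leading-order term in $P(k)$ has a strictly positive coefficient, and a careful tracking of the subleading corrections produces an explicit threshold $k_0$ beyond which $P(k) \geq 0$. Second, the finitely many cases $4 \leq k < k_0$ are verified by substituting exact rational values of $W_{k+1}$ and checking the inequality pointwise, for instance by computing $\operatorname{tr}$ and $\det$ and comparing the resulting bound to $(1-1.1/k)^2$ in exact rational arithmetic.

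The main obstacle is that the constant $1.1$ in the lemma is essentially sharp, differing from the true asymptotic constant $\approx 1.104$ by only about $0.004$. This leaves very little slack in the asymptotic estimate, so the threshold $k_0$ coming from the asymptotic argument may be moderately large, and the subleading terms in the expansion of $W_{k+1}^T W_{k+1}$ must be tracked with care to keep all estimates on the correct side. Since everything reduces to manipulations of explicit polynomials and exact rational arithmetic, the full verification is in principle a finite (if tedious) symbolic check of the kind for which the paper's computer-assisted framework is well-suited.
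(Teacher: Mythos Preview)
Your proposal is correct and matches the paper's approach, which is simply ``Direct computation.'' You have spelled out in detail what that direct computation entails, and your asymptotic analysis correctly identifies the sharp constant $\lambda_{\min}\bigl((E_\infty+E_\infty^T)/2\bigr)\approx 1.104$; one small slip: the exact value of the smaller eigenvalue is $(62-\sqrt{712})/32$, not $(31-\sqrt{712})/16$, though your numerical evaluation $\approx 1.104$ is correct.
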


\begin{proof}
    Direct computation.
\end{proof}

In particular, in view of Lemma~\ref{lem:prodottino}, this implies the following uniform bound. Let $M \geq 5$, then, for $k \geq M$:
\begin{equation}
    \sqrt{|a_k|^2 + |b_k|^2} \leq \Big(\frac{k}{M} \Big)^{-1.1}\sqrt{|a_M|^2 + |b_M|^2}.
\end{equation} 

\subsection{Explicit calculations involving the profile}
Let us collect here some explicit calculation involving the profile $\Gamma$. We have
\begin{equation}
    F^*_1(0) = \frac 52 \Gamma^*_1(0), \qquad F^*_2(0) = 4\Gamma^*_2(0).
\end{equation}
Since $ \strokedint  \Gamma^*_1 = 26$, we have
\begin{equation}
 \Gamma^*_1(0) = \frac{13^2\cdot5}{19}, \qquad \Gamma^*_2(0) = \frac{2 \cdot 3\cdot5 \cdot 13}{19}.
\end{equation}
This implies
\begin{equation}
    F^*_1(0) = \frac{13^2\cdot5^2}{2\cdot19}, \qquad F^*_2(0) = 4\Gamma^*_2(0) = \frac{2^3\cdot3\cdot5 \cdot 13}{19}.
\end{equation}

\section{Analysis of the angular average for large time}\label{sec:largetime}

The goal of this section is to establish the result in Theorem~\ref{thm:main} for time $t \geq \log(4)$. The section is divided into two sub-sections. In section~\ref{sec:linfty}, we will derive a relatively straightforward $L^\infty$ estimate on the evolution of $\Theta^{\text{mod}}_1$ and $\Theta^{\text{mod}}_2$, with initial data $F_*^{\text{mod}}$. In section~\ref{sec:laplacem1}, we will derive a precise estimate for $\strokedint \hat \Theta_1(-1)$. This will finally yield the positivity for time $t \geq \log(4)$ of $\Upsilon(t)$. The reason why these two quantities are important can be readily seen in equation~\eqref{eq:reasonwhy} at the end of this section, where the aforementioned quantities appear naturally.

\subsection{An $L^\infty$ bound for the evolution of the modulated quantities}\label{sec:linfty}
\newcommand{\gbar}{\bar{G}}

We start from the fundamental system~\eqref{eq:theta1pre}--\eqref{eq:theta2pre} and we impose the \emph{modified} initial conditions
\begin{equation}
    \Theta^{\text{mod}}(t=0) = F^{*, \text{mod}} :=  F^* - c \Gamma^*,
\end{equation}
where $c = \frac{237}{46}$. We know from (\cite{EP2023}, Lemma 3.9) that $\Theta^{\text{mod}}$ decays to $0$ exponentially as $t \to \infty$.

We obtain, letting $G_1 := \bar{\Theta}^{\text{mod}}_1/\gamma^2,$ and $G_2 = \bar{\Theta}^{\text{mod}}_2/\gamma^2$,
\begin{align}
&\p_t \gbar_1 + 4 \gamma \p_\gamma \gbar_1 + 14 \gbar_1 = 13 \gbar_2,\\
&\p_t \gbar_2 + 4 \gamma\p_\gamma \gbar_2 + 17 \gbar_2 = \frac{10}{1+\gamma^2} \Big(\gbar_1 + \gamma^2 \strokedint \gbar_1 - \gamma^2 \frac{2}\pi \int G_1 \Big)
\end{align}
This directly yields the following relations for the $L^\infty$ norms:
\begin{align*}
&\p_t |\gbar_1|_\infty + 14 |\gbar_1|_\infty \leq 13 |\gbar_2|_\infty,\\
&\p_t |\gbar_2|_\infty + 17 |\gbar_2|_\infty \leq 10 |\gbar_1|_\infty + 10 \cdot \frac 2 \pi \Big|\int G_1 \Big|.
\end{align*}
Since $\mathcal{I}(F^{*,\text{mod}}) = 0$, we have
\begin{equation}
   \Big| \int G_1 d\gamma \Big|= \exp(-23 t) \Big|\int G_1(t=0) d\gamma\Big|\leq 33 \exp(-23t).
\end{equation}
The above reasoning implies
\begin{align}
    |\gbar_1|_\infty + |\gbar_2|_\infty \leq \exp(-4t) (|\gbar_1(t=0)|_\infty + |\gbar_2(t=0)|_\infty) + \frac{20}{\pi} \int_0^t \exp(-4(t-s)) \Big|  \int G_1 d\gamma(s)\Big| ds,
\end{align}
from which we deduce
\begin{align}
    |\gbar_1|_\infty \leq \exp(-27 t) |\gbar_1(t=0)|_\infty + 13 \int_0^t \exp(-27(t-s))  ( |\gbar_1(s)|_\infty + |\gbar_2(s)|_\infty ) ds
\end{align}
This yields a bound on $|\gbar_1|_{\infty}$. We plug this back into the equation for $\bar{\Theta}^{\text{mod}}_2$, which can be rewritten as
\begin{equation}
      \p_t \bar{\Theta}^{\text{mod}}_2 + 4 \gamma \p_\gamma \bar{\Theta}^{\text{mod}}_2 + 9 \bar{\Theta}^{\text{mod}}_2 = \frac{10 \gamma^2} {1+\gamma^2}(\gbar_1  - \strokedint \gbar_1 + \frac 2 \pi \int G_1),
\end{equation}
so that
\begin{equation}
    \p_t |\bar{\Theta}^{\text{mod}}_2|_\infty + 9 |\bar{\Theta}^{\text{mod}}_2|_{\infty} \leq 20|\gbar_1|_\infty + \frac{20}\pi \Big| \int G_1 d\gamma\Big|,
\end{equation}
which implies a bound on $|\bar{\Theta}^{\text{mod}}_2|_\infty$ upon integration in time.
Moreover,
\begin{equation}
    \p_t |\bar \Theta^{\text{mod}}_1|_\infty + 6 |\bar \Theta^{\text{mod}}_1|_\infty \leq 13 |\bar{\Theta}^{\text{mod}}_2|_\infty.
\end{equation}
Finally,
\begin{equation}
    \p_t |\Theta^{\text{mod}}_2(\gamma=0)| + 9 |\Theta^{\text{mod}}_2(\gamma=0)| \leq 10 |\bar \Theta^{\text{mod}}_1|_\infty,
\end{equation}
and this implies a bound on $|\Theta^{\text{mod}}_2(\gamma=0)|$, which in turn, using the inequality
\begin{equation}
    \p_t |\Theta^{\text{mod}}_1(\gamma=0)| + 6 |\Theta^{\text{mod}}_1(\gamma=0)| \leq 13 |\Theta^{\text{mod}}_2(\gamma=0)|
\end{equation}
implies a bound on $|\Theta^{\text{mod}}_1(\gamma=0)|$.

Finally, we have
\begin{equation}
    \Big| \strokedint \Theta^{\text{mod}}_1\Big| \leq |\Theta^{\text{mod}}_1(\gamma = 0)| + |\bar{\Theta}^{\text{mod}}_1|_\infty,
\end{equation}
from which we obtain a bound on the average of $\Theta^{\text{mod}}_1$. The reasoning which we just concluded allows us to deduce an upper bound of $|\Theta^{\text{mod}}_1|$, and all the relevant calculations are conducted (symbolically) in the file \texttt{profile.m}. We record the result in the following Lemma.

\begin{lemma}
    The evolution of $F^{*, \text{mod}}$ under the semigroup given by $L_\theta$ obeys the $L^\infty$ bound in Section A of the companion file \emph{\texttt{profile.m}}.
\end{lemma}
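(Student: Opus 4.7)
My plan is to derive a cascade of linear differential inequalities for $L^\infty$ norms and integrate them sequentially via Gronwall's inequality. The organizing idea is that the rescaled quantities $\bar G_j := \bar{\Theta}^{\text{mod}}_j/\gamma^2$ satisfy transport equations with strictly larger damping constants (roughly $14$ and $17$) than $\bar{\Theta}^{\text{mod}}_j$ themselves (with dampings $6$ and $9$), so one should first bound $\bar G$ and then propagate the estimate back up to $\bar\Theta^{\text{mod}}$, then to the boundary values $\Theta^{\text{mod}}(\gamma = 0)$, and finally to $\strokedint \Theta^{\text{mod}}_1$.

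First I would write down the equations for $\bar G_1, \bar G_2$ by dividing the fundamental system by $\gamma^2$ and passing to modified variables. The only nonlocal contribution enters through $\frac{2}{\pi}\int G_1\, d\gamma$, and here the crucial observation is that the constant $c = 237/46$ is precisely chosen so that $\mathcal{I}(F^{*,\text{mod}}) = 0$, forcing $\int G_1\, d\gamma$ to decay like $\exp(-23 t)$ with an explicit initial bound (computed from the series expansion of $\Gamma^*$ in Section~\ref{sec:series}, giving at most $33$). Taking the pointwise maximum kills the transport operator $4\gamma\partial_\gamma$ (which is skew on $L^\infty$) and yields a closed differential inequality for $|\bar G_1|_\infty + |\bar G_2|_\infty$ with net damping at least $4$. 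A sharper second pass, iterating once more against the first $\bar G$-equation, gives a separate bound on $|\bar G_1|_\infty$ with damping $27$.

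Substituting these into the equation for $\bar{\Theta}^{\text{mod}}_2$ (damping $9$) produces an $L^\infty$ bound on $|\bar{\Theta}^{\text{mod}}_2|_\infty$, which then controls $|\bar{\Theta}^{\text{mod}}_1|_\infty$ through the first original transport equation (damping $6$). The boundary values $\Theta^{\text{mod}}_j(\gamma = 0)$ satisfy ODEs driven by these $L^\infty$ norms, and solving in the order $\Theta^{\text{mod}}_2(0)$ then $\Theta^{\text{mod}}_1(0)$ closes the chain. The required bound on $|\strokedint \Theta^{\text{mod}}_1|$ follows from the triangle inequality $|\strokedint \Theta^{\text{mod}}_1| \leq |\Theta^{\text{mod}}_1(\gamma = 0)| + |\bar{\Theta}^{\text{mod}}_1|_\infty$.

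The main obstacle is quantitative rather than structural. Each Gronwall step produces a convolution of exponentials whose coefficients are algebraic combinations of the initial $L^\infty$ norms of $F^{*,\text{mod}}$, and those norms must themselves be extracted with sufficient precision from the (polynomially convergent) series expansion of $\Gamma^*$. To avoid accumulating loss of precision across many substitutions, I would carry out the entire integration as exact rational arithmetic on linear combinations of decaying exponentials, deferring this bookkeeping to the symbolic companion file \texttt{profile.m}; the lemma then records the explicit bound thereby produced, which will be sharp enough to feed into the Laplace-transform argument at $t = \log 4$ in Section~\ref{sec:laplacem1}.
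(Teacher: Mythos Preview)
Your proposal is correct and matches the paper's own argument essentially step for step: the same passage to $\bar G_j = \bar{\Theta}^{\text{mod}}_j/\gamma^2$ with dampings $14$ and $17$, the same use of $\mathcal{I}(F^{*,\text{mod}})=0$ to force $|\int G_1\,d\gamma|\le 33\,e^{-23t}$, the same Gronwall cascade (sum bound with damping $4$, refined $|\bar G_1|_\infty$ with damping $27$, then $|\bar\Theta^{\text{mod}}_2|_\infty$, $|\bar\Theta^{\text{mod}}_1|_\infty$, boundary values, and the final triangle inequality), with the symbolic bookkeeping deferred to \texttt{profile.m}. The only cosmetic point is that calling $4\gamma\partial_\gamma$ ``skew on $L^\infty$'' is imprecise phrasing---what you mean (and what the paper uses) is the maximum principle: along characteristics the transport term drops out, leaving the damping.
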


\begin{remark}
Note in particular that the above reasoning shows that the evolution under $L_\theta$ of the initial conditions $F^{*, \text{mod}}$ decays to $0$ exponentially. This is an easier proof of the same statement contained in our main paper~\cite{EP2023}, however, in that paper we required estimates on an $L^2$ based space, which is more intricate. Note that we could have also adapted the $L^2$ based estimates to the present case, however the fact that $L^\infty$ norms are comparatively smaller gives a (purely) computational advantage in this case.
\end{remark}

\begin{remark}\label{rmk:derivative}
    Note also that the above reasoning gives a bound for the derivative $\Upsilon'(t)$, whose proof is carried out in the companion file \emph{\texttt{derivative\_sym.m}}. We obtain that, for $t \geq 0$,
    \begin{align}
    &|\Upsilon'(t)| \leq 20000 \qquad \text{when} \qquad t \in [0, \log(3)],\\
    &|\Upsilon'(t)| \leq 5000 \qquad  \ \text{when} \qquad t \in [\log(3), \log(4)].
\end{align}

The above bounds are obtained from the same argument as in the beginning of Section~\ref{sec:linfty}, after applying $\partial_t$ to the equations~\eqref{eq:theta1pre}--\eqref{eq:theta2pre}. To that effect, we compute the initial conditions $\partial_t \Theta (t = 0) = - L_\theta F_*$ 
(note that these initial conditions are obtained by formally plugging in $F_*$ in the RHS of equations~\eqref{eq:theta1pre}--\eqref{eq:theta2pre}). Importantly, the argument requires $L^\infty$ bounds for $L_\theta F_*$, which are obtained in Section~\ref{sec:linftyapp}. The relevant calculations needed to conclude the full proof are carried out in the companion file \emph{\texttt{derivative\_sym.m}}.

\end{remark}

\subsection{Computing the Laplace transform at $-1$ and its error bounds}\label{sec:laplacem1}

This section concerns the calculation of $\hat \Upsilon(-1)$. We have, from equation~\eqref{eq:mastermod} in Remark~\ref{rmk:forminusone}, that
\begin{equation}\label{eq:mqm}
\begin{aligned}
&\hat K_1(-1) \strokedint \hat \Theta^{\text{mod}}_1(-1)\\
& \quad = \hat K_1(-1) \Big(\frac{1}{5} F_{*,1}^{\text{mod}}(0) + \frac {13}{40} F_{*,2}^{\text{mod}}(0)\Big)+ \frac 9 {40}\frac 2 \pi \int\Big(H^{\text{mod}}_1 \hat{\mathring{w}}_1(-1) + H^{\text{mod}}_2 \hat{\mathring{w}}_2(-1) \Big) d\gamma.
\end{aligned}
\end{equation}It thus behooves us to find bounds for the expression appearing in~\eqref{eq:mqm}. We have the following Lemma.

\begin{lemma}\label{lem:bdmqm} The expressions
\begin{equation}
    \frac 2 \pi \int_0^\infty \Big(H_1 \hat{\mathring{w}}_1(-1) + H_2 \hat{\mathring{w}}_2(-1) \Big)d\gamma,\qquad \text{ and } \qquad \frac2 \pi \int_0^\infty \frac{1}{1+\gamma^2} \hat{\mathring{w}}_2(-1) d\gamma
\end{equation}
satisfy the bounds in the companion file \emph{\texttt{profile.m}}, section D.
    \end{lemma}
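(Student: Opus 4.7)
The plan is to compute both integrals explicitly by combining the series expansion of $\mathring{w}_1,\mathring{w}_2$ from Lemma~\ref{lem:w1} evaluated at $\xi=-1$ with the power-series expansion of the profile from Lemma~\ref{lem:profile}. First, I would evaluate the recursion $v_j(\xi) = N_j(\xi)\, v_{j-1}(\xi)$ pointwise at $\xi=-1$, starting from the explicit rationals $c_0(-1)=1/9$ and $d_0(-1)=11/54$. Since each $N_j(-1)$ has rational entries, this yields rational values for $c_j(-1),d_j(-1)$, and hence rational values for $\hat{\mathring{c}}_j(-1),\hat{\mathring{d}}_j(-1)$ via the formulas of Lemma~\ref{lem:w1}. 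In the companion file, I would truncate at $M=25$, as required by Lemma~\ref{lem:PQ}.

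Next, I would integrate the truncated approximations $A_M,B_M$ against $H$ (and, for the second integral of the statement, against $(1+\gamma^2)^{-1}$). By Lemma~\ref{lem:profile}, the components $H_1,H_2$ themselves admit absolutely convergent expansions in powers of $\gamma^2/(1+\gamma^2)$, so after multiplication with $A_M,B_M$ each summand reduces to a Beta-function integral
$$
\int_0^\infty (\gamma^2)^p(1+\gamma^2)^{-q}\,d\gamma \;=\; \tfrac12\, B\bigl(p+\tfrac12,\,q-p-\tfrac12\bigr),
$$
which is an explicit rational multiple of $\pi$. After multiplying by $2/\pi$, the ``principal part'' of each integral becomes a single symbolic rational number, computable exactly.

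To control the remainders $\hat\alpha(-1),\hat\beta(-1)$, I would take Laplace transforms at $\xi=-1$ of the error PDE in Lemma~\ref{lem:w1}, obtaining
$$
9\,\hat\alpha(-1) \;-\; 4\gamma\,\partial_\gamma\hat\alpha(-1) \;-\; \frac{10}{1+\gamma^2}\hat\beta(-1) \;=\; \frac{\hat P_M(-1)}{(1+\gamma^2)^{M+1}},
$$
plus an analogous equation for $\hat\beta(-1)$, and then integrate these against $H_1,H_2$ in the measure $d\gamma$. The positivity assertions of Lemma~\ref{lem:PQ} ($\beta\ge0$ and $P_M-Q_M\ge0$), combined with the spectral contraction $|N_j(-1)|_\infty\le 1-0.85/j$ from Lemma~\ref{lem:m1bound} (which controls the size of the neglected Laplace coefficients $c_j(-1),d_j(-1)$), pin down both the sign and an explicit, symbolically summable upper bound on the tail.

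The main obstacle is the slow polynomial contraction rate $1-0.85/j$: to achieve the precision required in Section~\ref{sec:largetime}, one is forced to carry $M\sim 25$ terms \emph{exactly}, producing rational expressions with very large numerators and denominators. This is precisely the regime in which symbolic arithmetic dominates interval arithmetic, and all remaining numerical evaluations and sign-checks will be carried out in the companion file \texttt{profile.m}, Section~D, as indicated in the statement.
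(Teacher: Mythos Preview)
Your high-level plan --- expand $\hat{\mathring w}_i(-1)$ via the recursion $v_j=N_j v_{j-1}$ at $\xi=-1$, expand $H_i$ via Lemma~\ref{lem:profile}, reduce each cross term to a Beta integral $c_k^{(j)}$, and control the tail with the contraction of Lemma~\ref{lem:m1bound} --- matches the paper's approach. Two points in your execution, however, would not go through.

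First, the truncation $M=25$ is far too small. The decay $j^{-0.85}$ from Lemma~\ref{lem:m1bound}, combined with $c_1^{(j)}\sim j^{-1/2}$, makes the $j$-tail for the row $k=1$ decay only like $j^{-1.35}$; cutting at $j=25$ leaves an error well above the precision needed in Section~\ref{sec:largetime}. The paper instead splits the double sum $\sum_{k\ge1}\sum_{j\ge0} r_{kj}$ into six regions (display~\eqref{eq:111}) with cutoffs at $k,j=35$, and computes the intermediate $j$-sums symbolically out to $N_1=5000$ (for $k=1$) and $N_2=500$ (for $2\le k\le 35$). The explicit inequalities~\eqref{eq:m1one}--\eqref{eq:restofmatrix} on $c_k^{(j)}$ then dispose of the remaining tails. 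The value $M=25$ belongs to the short-time argument of Section~\ref{sec:shorttime}, where Lemma~\ref{lem:PQ} is actually invoked; it plays no role in the present lemma.

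Second, your proposed remainder control via the Laplace-transformed PDE for $\hat\alpha(-1),\hat\beta(-1)$ is not what the paper does, and as written it is incomplete: you arrive at a coupled first-order ODE system in $\gamma$, but you have not explained how the \emph{time-domain} positivity $\beta\ge0$ and $P_M-Q_M\ge0$ from Lemma~\ref{lem:PQ} is converted into a two-sided bound on the $\gamma$-integrals of $\hat\alpha(-1),\hat\beta(-1)$. The paper sidesteps this entirely by estimating the double-series tail directly, exploiting the monotonicity of $c_k^{(j)}$ in each index together with Lemma~\ref{lem:m1bound}.
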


\begin{proof}[Proof of Lemma~\ref{lem:bdmqm}]
We define, whenever $k \geq 1$ and $j \geq 0$ are integers,
\begin{equation}
    c_k^{(j)} := \frac{2} \pi \int_0^\infty \Big(\frac{\gamma^2}{1+\gamma^2} \Big)^k \frac{1}{(1+\gamma^2)^j} \frac{d\gamma}{\gamma^2} =  \frac{2} \pi \frac{\Gamma(1/2 + j) \Gamma( k- 1/2)}{2 \Gamma(j + k)},
\end{equation}
where, here, $\Gamma$ denotes the standard Gamma function. The series expansion for $\Gamma^*$ in Lemma~\ref{lem:profile} induces a series expansion for $F^{*, \text{mod}}$ with coefficients $(\afm_k, \bfm_k)$:
\begin{equation}
    \bar{F}^{*, \text{mod}}_1 := \sum_{k =1}^\infty \text{afm}_k \Big(\frac{\gamma^2}{1+\gamma^2}\Big)^k, \qquad \text{and} \qquad \bar{F}^{*, \text{mod}}_2 := \sum_{k =1}^\infty \bfm_k \Big(\frac{\gamma^2}{1+\gamma^2}\Big)^k,
\end{equation}

Indeed, using the definition of $F^{*, \text{mod}}$ and the equations for $\Gamma^*$, we can compute the explicit form of the coefficients $\text{afm}_k$ and $\text{bfm}_k$ as follows:
\begin{align*}
&\text{afm}_k = - \frac 12 a_k + \frac{13}{2} b_k - c a_k \qquad (k \geq 1),  \\
&\text{bfm}_1 = 5 a_1 + 5 A - \frac 12 b_1 - c b_1,\\
&\text{bfm}_k =5 (a_k - a_{k-1}) - \frac 12 b_k - c b_k,\qquad (k \geq 2).
\end{align*}
(here, recall that $c = \frac{237}{46}$ and $A  = - \frac{351}{19} $).

This, on the other hand, implies
\begin{equation}
    \frac 2\pi \int_0^\infty \Big(H_1 \hat{\mathring{w}}_1(-1) + H_2 \hat{\mathring{w}}_2(-1) \Big) = \sum_{k \geq 1} \sum_{j \geq 0} (\text{afm}_k c_j(-1) c_k^{(j)}+ \text{bfm}_k  d_j(-1)c_k^{(j)}).
\end{equation}
We let $r_{kj}:= (\text{afm}_k c_j(-1) c_k^{(j)}+ \text{bfm}_k  d_j(-1)c_k^{(j)})$.

In order to compute the error terms in this expression, note that the following bounds follow from Lemma~\ref{lem:m1bound}, the form of $c_k^{(j)}$, and Lemma~\ref{lem:prodottino}. We fix $N \geq 10$, and we let $m_N :=  \max\{|c_N(-1)|, |d_N(-1)|\}$. 

First, we have
\begin{equation}\label{eq:m1one}
    \sum_{j \geq N}| c_k^{(j)} d_j(-1)| \leq \sum_{j \geq N}c_k^{(j)}\Big(\frac{j}{N} \Big)^{-0.85} m_N,
\end{equation}
We use
\begin{equation}
    c_k^{(j)} \leq \Big(\frac{j+k}{N+k-1} \Big)^{-k + \frac 12} c_k^{N}
\end{equation}
in order to obtain
\begin{equation}\label{eq:horizontal1}
    \sum_{j \geq N}| c_k^{(j)} d_j(-1)| \leq 1.5 \frac{N+k-1}{k + \frac 32} c_k^{(N)} m_N.
\end{equation}
Moreover, we also use
\begin{equation}
c^{(j)}_k \leq \Big(\frac{k+j-1}{N+j-1} \Big)^{- j - \frac 12} c_N^{(j)}.
\end{equation}
This yields
\begin{align}
    &\sum_{k \geq N} \sum_{j \geq N}| a_k c_k^{(j)} d_j(-1)| \leq \sum_{k \geq N} \Big(1.5 \frac{N+k-1}{k + \frac 32} c_k^{(N)} m_N\Big) \\
    &\leq 1.5 m_N c_N^{(N)} \sum_{k \geq N} \Big(\frac{k+N-1}{2N-1} \Big)^{- N - \frac 12}  \frac{N+k-1}{k + \frac 32} \label{eq:restofmatrix}\\
    &\leq 7.5 m_N c_N^{(N)}.
\end{align}
Similar bounds hold for the analogous expression with $c_j(-1)$.

With these bounds in mind, we are ready to estimate the expression, letting $r_{kj}$ be the expression inside the summation,
\begin{equation}\label{eq:111}\tag{$\star$}
\begin{aligned}
    &\frac{2}\pi\int\Big(H_1 \hat{\mathring{w}}_1(-1) + H_2 \hat{\mathring{w}}_2(-1) \Big) = \sum_{k \geq 1} \sum_{j \geq 0} (\text{afm}_k c_j(-1) c_k^{(j)}+ \text{bfm}_k  d_j(-1)c_k^{(j)})\\
    &=\underbrace{\sum_{k \geq 1} \sum_{0 \leq j \leq 35} r_{kj}}_{(1)} + \underbrace{\sum_{35 \leq j \leq N_1} r_{1j}}_{(2)} + \underbrace{\sum_{j > N_1} r_{1j}}_{(3)} + \underbrace{\sum_{2 \leq k \leq 35} \sum_{35 \leq j \leq N_2} r_{kj}}_{(4)}+ \underbrace{\sum_{2 \leq k \leq 35} \sum_{j > N_2} r_{kj}}_{(5)} + \underbrace{\sum_{\substack{k > 35\\j>35}} r_{kj}}_{(6)}.
\end{aligned}
\end{equation}

 We set $N_1 = 5000$, $N_2 = 500$ in \eqref{eq:111}. Now, (1) is computed directly from our knowledge of $I^{F_*, \text{mod}}_j, J^{F_*, \text{mod}}_j $, for which we have rigorous bounds from Section~\ref{sec:IJ} in the Appendix (see Lemma~\ref{lem:IJmod}). (2) and (4) are computed symbolically. The remaining error terms are bounded using the inequalities~\eqref{eq:m1one}, \eqref{eq:horizontal1}, \eqref{eq:restofmatrix}. A similar reasoning applies to $ \frac2 \pi \int_0^\infty \frac{1}{1+\gamma^2} \hat{\mathring{w}}_2(-1) d\gamma$. 
\end{proof}

We proceed to compute a lower bound for $\strokedint \hat{\Theta}^{\text{mod}}_1(-1)$. We have: 
\begin{align}\label{eq:reasonwhy}
    &\strokedint \hat{\Theta}^{\text{mod}}_1(-1)  = \frac{1}{5} F^{*, \text{mod}}_1(0) + \frac {13}{40} F^{*, \text{mod}}_2(0) + (\hat{K}_1(-1))^{-1}  \frac 9{40}  \frac 2 \pi  \int\big( H^{\text{mod}}_1 \hat{\mathring{w}}_1(-1) + H^{\text{mod}}_2 \hat{\mathring{w}}_2(-1)\big)\\
    &\qquad \geq \frac{1}{5} F^{*, \text{mod}}_1(0) + \frac {13}{40} F^{*, \text{mod}}_2(0)+ \frac{1}{(1+\hat{K_2}(-1))_\ell} \frac 9{40}  \frac 2 \pi \int\big( H^{\text{mod}}_1 \hat{\mathring{w}}_1(-1) + H^{\text{mod}}_2 \hat{\mathring{w}}_2(-1)\big)_\ell.
\end{align}
Here, the subscript $\ell$ denotes a lower bound (note, importantly, that the expression in the last parenthesis is negative).

A symbolic calculation finally yields the lower bound, which we record in the following Lemma.
\begin{lemma}\label{lem:laplacemenouno} We have
\begin{equation}
    \strokedint \hat{\Theta}^{\text{mod}}_1(-1) \geq -45.6.
\end{equation}
\end{lemma}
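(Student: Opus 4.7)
The starting point is the lower bound already recorded in display~\eqref{eq:reasonwhy}, which reduces the problem to three ingredients: (i) the explicit "boundary" contribution $\tfrac{1}{5} F^{*, \text{mod}}_1(0) + \tfrac{13}{40} F^{*, \text{mod}}_2(0)$, (ii) a numerical lower bound on the (negative) integral $\tfrac{9}{40}\tfrac{2}{\pi}\int(H_1 \hat{\mathring{w}}_1(-1) + H_2 \hat{\mathring{w}}_2(-1))d\gamma$, and (iii) a numerical upper bound on $1+\hat{K}_2(-1)$. Once all three are in hand, the claim is purely arithmetic.

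First, I would evaluate the boundary term symbolically and exactly. Using $F^{*,\text{mod}} = F^* - \tfrac{237}{46}\Gamma^*$ together with the closed-form rationals for $F^*_j(0)$ and $\Gamma^*_j(0)$ collected at the end of Section~\ref{sec:profile}, this quantity is an exact rational number, with no further approximation required.

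Second, I would invoke Lemma~\ref{lem:bdmqm} directly to produce the bounds on the two integrals in~\eqref{eq:reasonwhy}. Recall that these bounds are obtained by the decomposition~\eqref{eq:111}: the finite initial block of the double sum is computed exactly from the recursion for $(c_j(-1), d_j(-1))$ of Lemma~\ref{lem:w1}, from the closed form of $c_k^{(j)}$ as a ratio of $\Gamma$-values, and from the coefficients $(\afm_k, \bfm_k)$ of the profile expansion in Lemma~\ref{lem:profile}; the remaining tails are controlled geometrically by Lemma~\ref{lem:m1bound} through the elementary estimates~\eqref{eq:m1one},~\eqref{eq:horizontal1},~\eqref{eq:restofmatrix}. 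The resulting numerical upper bound on $\hat K_2(-1)$ and lower bound on the $H_1, H_2$ integral are those encoded in Section~D of the companion file \texttt{profile.m}.

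Third, I would combine the pieces in~\eqref{eq:reasonwhy}. Since $\hat K_2(-1)>0$ and the integral is negative, I use the upper bound on $1+\hat K_2(-1)$ together with the lower bound on the integral, so that the negative contribution is made as small in magnitude as the estimates allow. The right-hand side of~\eqref{eq:reasonwhy} is then a rational expression which can be evaluated symbolically, and the verification that it is $\geq -45.6$ is a single symbolic inequality check. The main obstacle is, as usual in this paper, not this final step but rather the precision demanded of the integral bounds in Lemma~\ref{lem:bdmqm}: a naive truncation of~\eqref{eq:111} at small indices produces a bound considerably worse than $-45.6$, which is why one must push the finite block of the double sum to $k\leq 35$ and $j$ into the thousands (for the $k=1$ slice) before the geometric tail estimates take over. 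Once that precision is available, the present lemma is immediate.
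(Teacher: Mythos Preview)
Your overall plan matches the paper's: compute the exact boundary contribution, invoke Lemma~\ref{lem:bdmqm} for the two integrals, and substitute into~\eqref{eq:reasonwhy}. However, the third step contains a sign slip that invalidates the bound.

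You write that you will ``use the upper bound on $1+\hat K_2(-1)$ together with the lower bound on the integral, so that the negative contribution is made as small in magnitude as the estimates allow.'' But the goal is a \emph{lower} bound on $\strokedint \hat{\Theta}^{\text{mod}}_1(-1)$, so you must make the negative term as \emph{large} in magnitude as the estimates allow, not as small. Concretely, writing $N<0$ for the integral term and $D=1+\hat K_2(-1)>0$, one has
\[
\frac{N}{D}\;\ge\;\frac{N_\ell}{D_\ell},
\]
which requires the \emph{lower} bound $D_\ell$ on the denominator (exactly as in~\eqref{eq:reasonwhy}). Your combination $N_\ell/D_u$ is in general neither an upper nor a lower bound for $N/D$: e.g.\ $N=-1$, $N_\ell=-2$, $D=1$, $D_u=10$ gives $N/D=-1<-0.2=N_\ell/D_u$. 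Moreover, your stated intention (minimize the magnitude of the negative piece) is inconsistent with your choice of $N_\ell$, which maximizes it. Once you replace $D_u$ by $D_\ell$ the argument is correct and coincides with the paper's.
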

The above calculations allow us to conclude the analysis for large times, which we carry out in the next section.

\subsection{Concluding the analysis for large time}

Finally, we rewrite the evolution under the full operator as
\begin{align}
    &\Upsilon(t) = \mathscr{F}^{-1}\Big( \frac{\xi}{1+\xi} \strokedint \Theta_1\Big) = (26c  + \strokedint \hat{\Theta}^{\text{mod}}_1(-1))e^{-t} + \strokedint \Theta^{\text{mod}}_1 (t) - \int_t^\infty e^{-t+s}\strokedint \Theta^{\text{mod}}_1 (s) ds\\
    & \qquad \geq (26 c  +\Big( \strokedint \hat{\Theta}_1(-1)\Big)_\ell)e^{-t} - \Big|\strokedint \Theta^{\text{mod}}_1 (t) \Big| - \int_t^\infty e^{-t+s}\Big|\strokedint \Theta^{\text{mod}}_1 (s)\Big| ds\label{eq:finalupsilon}
\end{align}
and the last two terms are estimated with the bounds arising from Section~\ref{sec:linfty}. We obtain the following lemma (whose proof is contained in the companion file \texttt{longtime\_sym.m}).

\begin{lemma} \label{lem:longtime} $\Upsilon(t) > 0$ for all $t \geq \log(4)$.
\end{lemma}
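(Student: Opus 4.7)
The plan is to use the lower bound derived in display~\eqref{eq:finalupsilon}, which reads
\[
\Upsilon(t) \geq \bigl(26 c + (\strokedint \hat{\Theta}_1(-1))_\ell\bigr) e^{-t} - |\strokedint \Theta^{\text{mod}}_1(t)| - \int_t^\infty e^{-t+s} |\strokedint \Theta^{\text{mod}}_1(s)| \, ds,
\]
and to show that for $t \geq \log 4$ the positive leading term dominates the two error terms. The first step is to evaluate the leading coefficient: plugging in $c = 237/46$ gives $26c = 3081/23 \approx 133.96$, and combining with Lemma~\ref{lem:laplacemenouno} (which contributes at worst $-45.6$) yields a strictly positive lower bound comfortably larger than $88$. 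Hence the first term is at least $88 \, e^{-t}$.

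Next I would control the two error terms by invoking the $L^\infty$ cascade developed in Section~\ref{sec:linfty}. Because $\mathcal{I}(F^{*,\text{mod}}) = 0$ the zero mode is absent, and the homogeneous linear system for $(\gbar_1, \gbar_2)$ then has slowest effective decay rate $e^{-4t}$, yielding $|\gbar_1|_\infty + |\gbar_2|_\infty \leq C_0 e^{-4t}$ for an explicit $C_0$. Propagating this through the cascade equations for $|\bar{\Theta}^{\text{mod}}_j|_\infty$ and then $|\Theta^{\text{mod}}_j(\gamma = 0)|$ (each a linear ODE with forcing at rate $e^{-4t}$) and summing produces, after tracking all multiplicative constants symbolically in the companion file \texttt{profile.m}, an explicit bound $|\strokedint \Theta^{\text{mod}}_1(s)| \leq C e^{-4s}$. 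In particular $\int_t^\infty e^{-t+s} |\strokedint \Theta^{\text{mod}}_1(s)| ds \leq \tfrac{C}{3} e^{-4t}$, so the sum of the two error terms is bounded by $\tfrac{4C}{3} e^{-4t}$.

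Combining the two estimates, the lemma reduces to verifying $88 \, e^{-t} \geq \tfrac{4C}{3} e^{-4t}$, i.e.\ $e^{3t} \geq C/66$, at the single time $t = \log 4$; the ratio of the two sides is $\tfrac{66}{C} e^{3t}$, which is increasing, so the inequality at $t = \log 4$ automatically propagates to all larger $t$. Since $e^{3 \log 4} = 64$, the numerical target is $C \leq 66 \cdot 64 \approx 4224$. The main obstacle is therefore quantitative rather than structural: the cascade in Section~\ref{sec:linfty} accumulates multiplicative constants at every stage, and a loose bookkeeping of them would only close the argument at a larger threshold such as $t \geq 5$ (the analytical bound alluded to in the introduction). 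Pushing the threshold down to $\log 4$ requires the exact symbolic constant tracking implemented in \texttt{longtime\_sym.m}, which is precisely where computer assistance enters.
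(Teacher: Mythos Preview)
Your overall strategy matches the paper's: combine the lower bound~\eqref{eq:finalupsilon} with the $L^\infty$ cascade of Section~\ref{sec:linfty} and the Laplace--transform estimate of Lemma~\ref{lem:laplacemenouno}, then verify positivity symbolically. You also correctly identify the leading coefficient $26c - 45.6 > 88$ and that the slowest rate emerging from the cascade is $e^{-4t}$.

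The gap is in your final reduction. You collapse the entire error to a single bound $|\strokedint \Theta^{\text{mod}}_1(s)| \leq C e^{-4s}$ and restate the lemma as the numerical target $C \leq 4224$. This is cruder than what the paper actually does, and that crudeness is not cosmetic: the paper does \emph{not} pass to a single exponential. The cascade of Section~\ref{sec:linfty} outputs an explicit finite sum of exponentials at rates $4,6,9,23,27$ (with symbolic rational coefficients), and after substitution into~\eqref{eq:finalupsilon} the lower bound for $\Upsilon(t)$ becomes a polynomial of degree~$28$ in the variable $e^{-t}$. Positivity on $(0,1/4]$ is then checked by the elementary criterion $b_0 > \sum_{n\geq 1} |b_n|\, 4^{-n}$ on the coefficients of a factor $B_1$ of that polynomial (Appendix~\ref{sec:prooflongtime}, part~D). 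Retaining all the rates --- so that the faster-decaying pieces are small at $e^{-t}=1/4$ rather than being absorbed into the $e^{-4t}$ coefficient --- is exactly what pushes the threshold from $t\approx 5$ down to $\log 4$. The ``exact symbolic constant tracking'' you invoke is not the tracking of a single $C$, but of the full multi-rate expression; your own remark that loose bookkeeping only closes at $t\geq 5$ is already evidence that the single-exponential $C$ exceeds $4224$.
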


\begin{proof}[Proof of Lemma~\ref{lem:longtime}]
    The proof of the lemma is carries out in the companion file \texttt{longtime\_sym.m}, and it is described in section~\ref{sec:prooflongtime} in the Appendix.
\end{proof}
\section{Analysis of the angular average for short time}\label{sec:shorttime}
Aim of this section is to prove the following lemma:

\begin{lemma}\label{lem:shorttime}
$\Upsilon(t) > 0$ for all $t \in [0,\log(4)]$.
\end{lemma}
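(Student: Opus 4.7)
My proof plan is to exploit the positive Volterra integral equation $\Upsilon(t) + \int_0^t K_2(t-s)\Upsilon(s)\,ds = g(t)$ provided by Lemma~\ref{lem:volterra}. The kernel $K_2$ is nonnegative: this follows inductively from Lemma~\ref{lem:w1} (one checks that $\mathring{d}_j(t) \geq 0$ for all $t \geq 0$, as already noted in the paragraph introducing $K_2$). A symbolic computation on the truncated series expansion should analogously establish $g(t) \geq 0$ on $[0,\log(4)]$ up to a provably small tail.

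Under these positivity conditions, the Picard iterates defined by $P_0 := g$ and $P_{n+1}(t) := g(t) - \int_0^t K_2(t-s) P_n(s)\,ds$ bracket $\Upsilon$ in the standard alternating way: the even iterates bound it from above and the odd iterates bound it from below, as indicated in the introduction. It therefore suffices to show $P_3(t) > 0$ for $t \in [0, \log(4)]$, which will imply $\Upsilon(t) \geq P_3(t) > 0$ throughout this interval. The quantity $P_3$ unfolds into
\begin{equation*}
P_3(t) = g(t) - (K_2 * g)(t) + (K_2 * K_2 * g)(t) - (K_2 * K_2 * K_2 * g)(t),
\end{equation*}
where $*$ denotes convolution in time.

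The technical heart of the argument is to evaluate each of these four terms with enough precision to exhibit a positive lower bound. For this I would expand $K_2(t)$ in the basis $\{1/(1+\gamma^2)^j\}$ via Lemma~\ref{lem:w1}, and expand the profile appearing in $g$ (through $F^*$) in the basis $\{(\gamma^2/(1+\gamma^2))^k\}$ via Lemma~\ref{lem:profile}; the resulting $\gamma$-integrals collapse to Beta-function values $c_k^{(j)}$ exactly as in the proof of Lemma~\ref{lem:bdmqm}. Both series converge only polynomially (Lemmas~\ref{lem:spgap} and~\ref{lem:m1bound}), so roughly $25$ terms are required for a useful truncation, with tails controlled by the uniform geometric estimates \eqref{eq:horizontal1}--\eqref{eq:restofmatrix}. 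After truncation, each term in $P_3$ is an explicit rational combination of polynomials and exponentials in $t$ with symbolic rational coefficients; the iterated convolutions are computed exactly by repeated integration by parts against $e^{-\lambda t}$.

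The main obstacle is purely quantitative rather than conceptual. As Figure~\ref{fig:shorttime} indicates, the margin by which $P_3$ stays positive on $[0, \log(4)]$ is slim, and it fails shortly after $t = 1.4$; hence the truncation level $M$ and the tail-bound constants must be chosen carefully enough that the accumulated error bars do not swamp the sign of $P_3$ precisely on $[0,\log(4)]$. Once a concrete rational/exponential expression for a certified lower bound of $P_3(t)$ is produced, its positivity on the compact interval $[0, \log(4)]$ reduces to sign-determination for a polynomial with rational coefficients (after clearing the few exponentials by monotonicity or further subdivision of the interval), which can be carried out by exact symbolic computation in the companion file.
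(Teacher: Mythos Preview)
Your plan matches the paper's: reduce to the Volterra equation of Lemma~\ref{lem:volterra} with nonnegative kernel $K_2$, and verify symbolically (via a $\sim 25$-term truncation) that the third Picard iterate $P_3$ is strictly positive on $[0,\log(4)]$.

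Two points to sharpen. First, the alternating bracketing $P_{2k+1}\leq \Upsilon \leq P_{2k}$ is only valid on an interval where $\Upsilon>0$ is already known, so the implication ``$P_3>0$ on $[0,\log 4]$ $\Rightarrow$ $\Upsilon\geq P_3>0$ there'' is circular as you have stated it; the paper closes this by a continuity/contradiction argument at the first zero $T^*$ of $\Upsilon$ (using $\Upsilon(0)=g(0)>0$, so $T^*>0$, and then $\Upsilon(T^*)\geq P_3(T^*)>0$), and you should make this step explicit. In particular you do not need $g\geq 0$ on the whole interval, only $g(0)>0$. Second, the tail estimates \eqref{eq:horizontal1}--\eqref{eq:restofmatrix} that you cite are specific to the Laplace transform at $\xi=-1$; for pointwise-in-$t$ control of the truncation errors in $\mathring{w}_1,\mathring{w}_2$ (and hence in $K_2$ and $g$) the paper instead bounds the remainders $(\alpha,\beta)$ directly via the PDE estimates of Lemmas~\ref{lem:alphabeta} and~\ref{lem:etas}.
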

The proof of the Lemma will reduce to showing that a certain Picard iterate of equation~\eqref{eq:mainvolte} is positive. We will check this positivity symbolically in the companion files.

We are ready to state and prove the main lemma of this section. The rest of the section is then going to be devoted to showing (via symbolic calculations) the lemma.
\begin{lemma}\label{lem:picard}
    Consider the Picard iterates
    \begin{equation}
        P_n(t) := \mathscr{F}^{-1}\Big(\Big( \sum_{j = 0}^n (-1)^j (\hat{K}_2)^j\Big) \hat K_1 \hat{\Upsilon} \Big).
    \end{equation}
    Then, $P_0(0) > 0$ and $P_3(t) > 0$ for all $t \in [0, \log(4)]$.
\end{lemma}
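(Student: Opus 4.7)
The plan is to reduce the lemma to two symbolic positivity checks and to certify each of them in exact rational arithmetic. The first assertion, $P_0(0)>0$, is essentially immediate: by definition $P_0(t) = \mathfrak{F}^{-1}(\hat K_1 \hat\Upsilon) = g(t)$ in view of the Volterra reformulation of Lemma~\ref{lem:volterra}, and evaluating \eqref{eq:mainvolte} at $t=0$ gives $\Upsilon(0)=g(0)$. Substituting the rational values of $F_{*,1}(0),F_{*,2}(0)$ from Section~\ref{sec:profile} into the closed-form expression for $g$ in Lemma~\ref{lem:volterra} produces a rational number whose positivity is a one-line check.

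The bulk of the work is the bound $P_3(t)>0$ for $t\in[0,\log 4]$. Writing the partial Neumann sum,
$$P_3(t) \;=\; g(t) - (K_2*g)(t) + (K_2*K_2*g)(t) - (K_2*K_2*K_2*g)(t),$$
the strategy is to produce a rigorous lower bound $\widetilde P_3(t)$ which is a concrete finite sum of terms of the form $t^j e^{-\lambda t}$ with exact rational coefficients, and then to certify $\widetilde P_3>0$ on the interval. For this I need two-sided bounds on $g$ and $K_2$: lower bounds feed into the positive terms in $P_3$ and upper bounds into the negative terms, with signs chosen consistently across all four summands.

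Concretely, I would truncate the series from Lemma~\ref{lem:profile} for the profile and from Lemma~\ref{lem:w1} for $\mathring w_1,\mathring w_2$ at a common order $M\approx 25$, yielding the ``main'' parts of $g$ and $K_2$ as exact finite sums of $t^j e^{-\lambda t}$. The tails are then bounded explicitly: the profile tail is controlled via the contraction $|W_{k+1}|_2\le 1-1.1/k$ from Lemma~\ref{lem:spgap}, the weight tail via the analogous estimate $|N_j(-1)|_\infty\le 1-0.85/j$ from Lemma~\ref{lem:m1bound}, and the sign of the residual source term in the system for $(\alpha,\beta)$ from Lemma~\ref{lem:w1} is pinned down by Lemma~\ref{lem:PQ}, so that the tail contribution to $K_2$ has a definite direction. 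Since convolutions of finite sums of $t^j e^{-\lambda t}$ stay in the same class with exact rational coefficients, the resulting $\widetilde P_3(t)$ is an explicit symbolic function. To verify its positivity on $[0,\log 4]$, I change variables by $\tau = 1 - e^{-t}$, mapping the interval to $[0,3/4]$ and reducing the claim to a polynomial-positivity statement with rational coefficients; this can be certified exactly by Sturm's algorithm, or alternatively by a fine grid evaluation combined with a symbolic derivative bound (available since $\widetilde P_3$ is an explicit combination of $t^j e^{-\lambda t}$).

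The main obstacle is calibrating the truncation order so that the accumulated tail error is small enough to be absorbed. Because the series for $\Gamma^*$ and for the weights converge only polynomially, and because the triple convolution in the last term of $P_3$ amplifies errors, $M$ must be taken rather large, especially near the right endpoint $t=\log 4$ where $P_3$ is expected to be small (as suggested by Figure~\ref{fig:shorttime}). The delicate part is managerial: controlling the size of the rational coefficients that appear after the convolutions so that the final polynomial-positivity certificate remains computable in practice. This is precisely the step where the exact symbolic approach, rather than interval arithmetic, pays off, and the actual verification is carried out in the companion files along the lines described above.
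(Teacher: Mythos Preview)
Your approach is essentially the paper's: truncate the profile and weight expansions, control the tails, build two-sided bounds for $g$ and $K_2$, push these through the first few Picard convolutions, and certify positivity of the resulting explicit symbolic expression. Two technical points deserve correction, though neither derails the strategy.

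First, the substitution $\tau=1-e^{-t}$ does \emph{not} reduce $\widetilde P_3$ to a polynomial: the convolutions produce terms $t^j e^{-\lambda t}$ with $j\ge 1$ (repeated poles), and $t=-\log(1-\tau)$ is not algebraic. The paper instead replaces each $e^{-t}$ by a truncated Taylor polynomial (upper or lower, according to the sign of the coefficient) via the \texttt{makepoly} procedure, obtaining a genuine polynomial lower bound with rational coefficients; positivity is then checked by your alternative route, a grid evaluation combined with a derivative bound. Second, Lemma~\ref{lem:m1bound} is specific to $\xi=-1$ and is used only in the large-time argument of Section~\ref{sec:laplacem1}; for the short-time tail of $(\mathring w_1,\mathring w_2)$ the relevant control is the differential-inequality estimate of Lemma~\ref{lem:alphabeta} (which is where Lemma~\ref{lem:PQ} enters) together with the $\eta$-bounds of Lemma~\ref{lem:etas}. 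A smaller point: the paper truncates asymmetrically ($N_3=4$ for the profile via Lemma~\ref{lem:hs}, $N_4=25$ for the weight), and as a computational economy checks $P_1>0$ on $[0,\log 3]$ and $P_3>0$ only on $[\log 3,\log 4]$.
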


\begin{remark}
    Note that the above Picard iterates satisfy the recursion relation:
    \begin{equation*}
        P_0(t) = g(t), \qquad P_{k+1}(t) = g(t) - \int_0^t K_2(t-s) P_{k}(s).
    \end{equation*}
\end{remark}

Assuming the lemma, we are now going to show that the full evolution of the angular average is positive for times $t \in [0, \log(4)]$.

\begin{proof}[Proof of Lemma~\ref{lem:shorttime}]
We recall the Volterra integral equation from Lemma~\ref{lem:volterra}:
\begin{equation}\label{eq:volterrasimple}
    f(t) + \int_{0}^t K_2(t-s) f(s) ds = g(t),
\end{equation}
where $f(t) = \mathscr{F}^{-1}\Big(\frac{\xi}{1+\xi} \strokedint \hat{\Theta}_1  \Big)$, and $g(t)$ is the inverse Laplace transform of the RHS in~\eqref{eq:masterrhs}. We also know that $K_2(t) \geq 0$ for all $t \geq 0$\footnote{This follows from the fact that $K_2$ is obtained by summing positive terms. These terms are seen to be positive since they are iterated convolutions of positive functions).}.

Since $f(0) = g(0) > 0$, and let us assume by contradiction that $f(t) \leq 0$ somewhere in $[0, \log(4)]$. We let $T^*:= \inf\{t \in [0, \log 4]: f(T^*) \leq 0\}$, and note that $T^* > 0$. Then, on $[0, T^*)$, $f(t) > 0$. Inductively, this implies that 
$$
f(t) \leq P_{2k}(t), \qquad f(t) \geq P_{2k+1}(t)
$$
for all $k \in \N$, and all $t \in [0, T^*)$. 

Indeed, for the base case, since $P_0(t) = g(t)$, and $K_2$, $f$ are positive in the interval considered, $f(t) = g(t) - \int_0^t K_2(t-s) f(s) ds  \geq g(t)$. Assuming then that $f(t) \leq P_{2k}(t)$ for all $t \in [0, \log(4)]$, we have
\begin{equation*}
    f(t) = g(t) - \int_{0}^t K_2(t-s) f(s) ds \geq g(t) - \int_{0}^t K_2(t-s) P_{2k}(s) ds = P_{2k+1}(t)
\end{equation*}
in the interval $t \in [0, \log(4)]$. The step to go from $2k+1$ to $2k +2$ is analogous.

By Lemma~\ref{lem:picard}, we then have $f(T^*)>0$, however $f(T^*) = 0$ by definition of $T^*$, contradiction.
\end{proof}

We are ready for the proof of Lemma~\ref{lem:picard}. We divide the proof in the next few subsections. The main idea is to derive precise enough bounds for $\ringw_1$ and $\ringw_2$, which then imply precise bounds on $K_2$ and $g(t)$ in equation~\eqref{eq:volterrasimple}, from which we can then compute the Picard iterates. Section~\ref{sec:etas} will be devoted to finding such bounds. 

\subsection{Precise bounds for $\mathring{w}_1$ and $\mathring{w}_2$}\label{sec:etas}
In this section, we are going to use precise upper and lower bounds on $\mathring w_1$ and $\mathring w_2$ which can be computed symbolically. Recall the definitions of $A_M, B_M, \alpha, \beta$ from Lemma~\ref{lem:w1} and the lemma~\ref{lem:PQ} on the positivity of $P_M - Q_M$.

We are going to estimate $\alpha$ and $\beta$. We have, after going to Lagrangian coordinates $\gamma_0$ such that $\gamma = \exp(-4t) \gamma_0$, using the Lemma,
\begin{equation}
    (\alpha - \beta)' + 23 (\alpha - \beta)  \leq \frac{1}{1 + e^{-8t} \gamma^2_0} (P_M(t) - Q_M(t)),
\end{equation}
hence\footnote{Mote that, since $M \geq 2$, the initial conditions for $\alpha$ and $\beta$ are trivial.}
\begin{equation}
    \alpha(t) - \beta(t) \leq \underbrace{\int_0^t \exp(-23(t-s)) (P_M-Q_M)(s) \frac{1}{(1+e^{8(t-s)}\gamma^2)^{M+1}}ds}_{:= B_1(t,\gamma)}.
\end{equation}
This implies
\begin{equation}
\p_t \beta \leq 13 B_1(t, e^{-4t}\gamma_0) + \frac{Q_M(t)}{(1+e^{-8t} \gamma_0^2)^{M+1}}    
\end{equation}
from which we have
\begin{align}
    \beta(t) \leq \underbrace{13 \int_0^t \int_0^w \exp(-23(w-s)) (P_M- Q_M)(s) \frac{1}{(1+e^{8(t-s)}\gamma^2)^{M+1}} ds dw + \int_0^t \frac{Q_M(w)}{(1+ e^{8(t-w)}\gamma^2)^{M+1}}dw}_{:= B_2(t, \gamma)}.
\end{align}
which also implies
\begin{equation}
    \alpha(t) \leq B_1(t, \gamma) + B_2(t, \gamma).
\end{equation}
The lower bounds are obtained by a similar reasoning. Collecting the above observations, we obtain the following lemma.

\begin{lemma}\label{lem:alphabeta}
The functions $\alpha$, $\beta$ satisfy the following bounds:
\begin{align} 
    &\int_0^t \exp(-10(t-s)) \frac{P_M(s)}{(1+e^{8(t-s)}\gamma^2)^{M+1}}ds \leq \alpha(t) \leq B_1(t, \gamma) + B_2(t, \gamma),\\
    &\int_0^t \exp(-13(t-s)) \frac{Q_M(s)}{(1+e^{8(t-s)}\gamma^2)^{M+1}} ds\leq \beta(t) \leq  B_2(t, \gamma).
\end{align}
\end{lemma}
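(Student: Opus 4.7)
The plan is to pass to Lagrangian coordinates along the common transport operator $\partial_t - 4\gamma\partial_\gamma$, setting $\gamma(t) = e^{-4t}\gamma_0$, so that the PDE system for $(\alpha,\beta)$ from Lemma~\ref{lem:w1} reduces to a pair of coupled ODEs in $t$ parametrized by $\gamma_0$. Since $M \geq 2$, the initial conditions for both $\alpha$ and $\beta$ vanish, so the forcings $P_M(t)/(1+\gamma(t)^2)^{M+1}$ and $Q_M(t)/(1+\gamma(t)^2)^{M+1}$, with $\gamma(t)^2 = e^{-8t}\gamma_0^2$, drive the evolution from rest. Returning to Eulerian variables after integration produces the factor $(1+e^{8(t-s)}\gamma^2)^{M+1}$ that appears in the denominators of the lemma.

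For the upper bounds I would first subtract the $\beta$-equation from the $\alpha$-equation and rearrange using $10\alpha - \frac{10}{1+\gamma^2}\beta = 10(\alpha - \beta) + \frac{10\gamma^2}{1+\gamma^2}\beta$ to obtain
\begin{equation}
\partial_t(\alpha - \beta) - 4\gamma\partial_\gamma(\alpha-\beta) + 23(\alpha - \beta) = -\frac{10\gamma^2}{1+\gamma^2}\,\beta + \frac{P_M - Q_M}{(1+\gamma^2)^{M+1}}.
\end{equation}
By Lemma~\ref{lem:PQ}, both $\beta \geq 0$ and $P_M - Q_M \geq 0$, so the coupling term on the right has a favorable sign and the forcing is non-negative; multiplying through by $e^{23t}$ and integrating along characteristics yields $\alpha - \beta \leq B_1(t,\gamma)$. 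Substituting back into the $\beta$-equation gives
\begin{equation}
\partial_t\beta - 4\gamma\partial_\gamma\beta = 13(\alpha - \beta) + \frac{Q_M}{(1+\gamma^2)^{M+1}} \leq 13\, B_1 + \frac{Q_M}{(1+\gamma^2)^{M+1}},
\end{equation}
and integrating in the Lagrangian variable produces the stated bound $\beta \leq B_2$, whence $\alpha \leq B_1 + B_2$ is immediate.

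For the lower bounds I would exploit the non-negativity of the coupling terms themselves. The positivity of $\alpha$ and $\beta$ follows from Lemma~\ref{lem:PQ} together with $P_M, Q_M \geq 0$ individually, which is inherited from the positivity of the $\mathring d_j$ already used for the kernel $K_2$ in Section~\ref{sec:volterra}. Dropping the non-negative coupling terms $\frac{10}{1+\gamma^2}\beta$ and $13\alpha$ from each equation yields the differential inequalities
\begin{equation}
\partial_t\alpha + 10\alpha \geq \frac{P_M}{(1+\gamma^2)^{M+1}}, \qquad \partial_t\beta + 13\beta \geq \frac{Q_M}{(1+\gamma^2)^{M+1}}
\end{equation}
along characteristics, and integrating with vanishing initial data delivers precisely the lower bounds stated in the lemma.

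The argument is therefore largely bookkeeping once the decomposition into $\alpha - \beta$ is used: the subtraction cleanly exposes the coefficient $23$ matching the decay rate built into $B_1$, and the lower-triangular structure of the coupling (the $\alpha$-equation feeding into the $\beta$-equation through the $13(\alpha-\beta)$ term) permits sequential integration. The only substantive input is Lemma~\ref{lem:PQ}, which is verified symbolically; I do not expect a serious analytical obstacle beyond confirming the sign properties of $\alpha$, $\beta$, $P_M$, and $Q_M$ used above.
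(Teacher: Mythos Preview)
Your proposal is correct and follows essentially the same route as the paper: pass to Lagrangian coordinates, form the difference $\alpha-\beta$ to expose the coefficient $23$, use Lemma~\ref{lem:PQ} to bound $\alpha-\beta\le B_1$, feed this back into the $\beta$-equation to obtain $\beta\le B_2$, and handle the lower bounds by dropping the non-negative coupling terms (the paper records only ``similar reasoning'' for the latter). One small remark: your claim that $P_M\ge 0$ follows from positivity of the $\mathring d_j$ alone is not quite right since $P_M=8M\mathring c_M+10\mathring d_M$; however $Q_M=8M\mathring d_M\ge 0$ together with $P_M-Q_M\ge 0$ from Lemma~\ref{lem:PQ} gives $P_M\ge 0$, which then yields $\alpha\ge 0$ as you need.
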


\begin{remark}\label{rmk:alphabetalu}
    It is convenient to denote the lower and upper bounds arising in the previous lemma as follows:
    \begin{align}
    &\alpha_\ell \leq \alpha(t) \leq \alpha_u,\\
    &\beta_\ell\leq \beta(t) \leq  \beta_u.
    \end{align}
\end{remark}

Since all relevant error bounds in the Volterra equation~\eqref{eq:mainvolte} are computed in terms of the bounds multiplied by powers of $\frac{\gamma^2}{1+\gamma^2}$, it behooves us to find bounds for the following expressions, when $r \geq 0$, and $M, N \in \N$,
\begin{equation}
    \eta(r,M,N) := \int_0^\infty \frac{1}{(1+e^{8r}\gamma^2)^{M}} \Big(\frac{\gamma^2}{1+\gamma^2}\Big)^N \frac 1 {\gamma^2} d\gamma.
\end{equation}
We collect the bounds in the following Lemma
\begin{lemma}\label{lem:etas}
    We have the following bounds for the expression $\eta$, whenever $M \geq 1$, $N \geq 0$ and $r \geq 0$:
    \begin{align}
         &\frac{\pi} 2 \exp(-4 r) c^{(M)}_1\leq \eta(r,M,1)\leq \frac \pi 2 \exp(-4 r) c^{(M-1)}_1,
    \end{align}
    if $M > N$,
    \begin{align}
          \frac{1}{2^M 2^N}\exp(-8(M+1)r)\frac{1}{2M+1} \leq \eta(r,M,N)\leq  \exp(-8r(N-1)-4r)\frac{\pi}{2} c^{(M-N)}_N + \exp(-8Mr)\frac{1}{2M-1},
    \end{align}
    if $M \leq N$, 
        \begin{align}
        \frac{1}{2^M 2^N}\exp(-8(M+1)r)\frac{1}{2M+1} \leq \eta(r,M,N)\leq  \exp(-8r(M-1)-4r)\frac{\pi}{2} c^{(0)}_M + \exp(-8Mr)\frac{1}{2M-1}.
    \end{align}
\end{lemma}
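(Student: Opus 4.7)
The plan is to prove each bound by direct integral estimates, relying on elementary monotonicity arguments and the change of variables $u = e^{4r}\gamma$, which converts $\frac{1}{(1+e^{8r}\gamma^2)^{M}}$ into the standard factor $\frac{1}{(1+u^2)^M}$ at the price of an exponential prefactor in $r$. The key observation is that the integrals arising after this substitution, when extended from a compact range to $[0,\infty)$, are precisely of the form appearing in the definition of $c_k^{(j)}$, namely $\int_0^\infty (\gamma^2/(1+\gamma^2))^k (1+\gamma^2)^{-j} \gamma^{-2}\, d\gamma = \frac{\pi}{2}c_k^{(j)}$.

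For the case $N=1$, there is no need to split the integral: the substitution $u = e^{4r}\gamma$ directly rewrites $\eta(r,M,1) = e^{-4r}\int_0^\infty \frac{du}{(1+u^2)^M(1+e^{-8r}u^2)}$, and the two bounds follow from inserting $1 \le 1 + e^{-8r}u^2 \le 1+u^2$ (using $r \geq 0$), which respectively identify the integral with $\frac{\pi}{2}c_1^{(M-1)}$ and $\frac{\pi}{2}c_1^{(M)}$. For general $N \geq 2$, I would split the integral into $[0,1]$ and $[1,\infty)$. On $[1,\infty)$ I will use the uniform bound $\gamma^{2N-2}/(1+\gamma^2)^N \leq 1/(1+\gamma^2) \leq 1$ (which follows from writing the LHS as $(\gamma^2/(1+\gamma^2))^{N-1}/(1+\gamma^2)$), combined with $(1+e^{8r}\gamma^2)^{-M} \leq e^{-8Mr}\gamma^{-2M}$, so that $\int_1^\infty \gamma^{-2M}\,d\gamma = \frac{1}{2M-1}$ produces the second term in the upper bound.

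The $[0,1]$ piece is where the two cases $M>N$ and $M\leq N$ diverge. When $M>N$, I will bound $(1+\gamma^2)^{-N}\leq 1$ and then extend the integral to $[0,\infty)$; the substitution $u=e^{4r}\gamma$ yields $e^{-4r(2N-1)}\int_0^\infty u^{2N-2}(1+u^2)^{-M}\,du = e^{-8r(N-1)-4r}\cdot\frac{\pi}{2}c_N^{(M-N)}$, and convergence of the extended integral requires exactly $M>N$. When $M\leq N$, the same extended integral would diverge, so instead I exploit the pointwise inequality $\gamma^{2N-2}/(1+\gamma^2)^N = (\gamma^2/(1+\gamma^2))^{N-M}\cdot\gamma^{2M-2}/(1+\gamma^2)^M \leq \gamma^{2M-2}/(1+\gamma^2)^M$, after which the same extension and substitution yield $e^{-4r(2M-1)}\frac{\pi}{2}c_M^{(0)}$.

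For the lower bound in both cases, I will restrict the integral to $[1,\infty)$ and reverse the previous inequalities: on $[1,\infty)$ both $1+\gamma^2\leq 2\gamma^2$ and $1+e^{8r}\gamma^2 \leq 2e^{8r}\gamma^2$ hold, which gives $\frac{\gamma^{2N-2}}{(1+\gamma^2)^N(1+e^{8r}\gamma^2)^M}\geq 2^{-M-N}e^{-8Mr}\gamma^{-2M-2}$, and integrating yields $\frac{e^{-8Mr}}{(2M+1)2^{M+N}}$; replacing $e^{-8Mr}$ by the weaker $e^{-8(M+1)r}$ (valid because $r\geq 0$) gives the stated form. There is no serious obstacle here: the only subtle point is recognizing which form of the pointwise bound on $\gamma^{2N-2}/(1+\gamma^2)^N$ to use in each regime so that the resulting exponent and the $c_k^{(j)}$ identity line up exactly with the claimed bounds, and the rest is elementary.
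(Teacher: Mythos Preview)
Your proposal is correct and follows essentially the same approach as the paper: split at $\gamma=1$, use elementary pointwise bounds on each piece, and apply the substitution $u=e^{4r}\gamma$ on the $[0,1]$ part to recognize the $c_k^{(j)}$ integrals. Your treatment of the $[1,\infty)$ contribution in the upper bound (using $(1+e^{8r}\gamma^2)^{-M}\le e^{-8Mr}\gamma^{-2M}$) in fact yields exactly the stated factor $e^{-8Mr}/(2M-1)$, whereas the paper's written proof gives only $e^{-4r}/(2M-1)$ there; your version matches the lemma statement more closely.
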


\begin{remark}
    It will be convenient to adopt the following notation for the upper bounds of the expressions $\eta$ appearing in the lemma above:
    \begin{equation}
        \eta_\ell(r,M,N) \leq \eta(r,M,N) \leq \eta_u(r,M,N).
    \end{equation}
\end{remark}

\begin{proof}
    The proof of the first inequality is straightforward after noting that, since $r \geq 0$,
    \begin{equation}
        \frac{1}{(1+e^{8r}\gamma^2)^{M+1}}\leq \frac{1}{1+\gamma^2} \frac{1}{(1+e^{8r}\gamma^2)^M} \leq \frac{1}{(1+e^{8r}\gamma^2)^M}.
    \end{equation}
    Concerning the second inequality (upper bound), we have
    \begin{equation}
    \begin{aligned}
    & \eta(r,M,N) = \int_0^{\infty} \frac{1}{\left(1+e^{8 r} \gamma^2\right)^{M}}\left(\frac{\gamma^2}{1+\gamma^2}\right)^N \frac{1}{\gamma^2}d\gamma \\
    = & \int_0^1 \frac{1}{\left(1+e^{8 r}\right)^{M}}\left(\frac{\gamma^2}{1+\gamma^2}\right)^N \frac{1}{\gamma^2}+\int_1^{\infty} \frac{1}{\left(1+e^{8 r} \gamma^2\right)^{M}} \left(\frac{\gamma^2}{1+\gamma^2}\right)^N \frac{1}{\gamma^2} d \gamma \\
    \leq & \int_0^1 \frac{\gamma^{2 N-2}}{\left(1+e^{8 r} \gamma^2\right)^{M}} d \gamma+\int_1^{\infty} \frac{1}{\left(1+e^{8 r} \gamma^2\right)^{M}} \frac{1}{1+\gamma^2} d \gamma \\
    \leq & e^{-4 r} e^{-8 r(N-1)} \int_0^{\infty} \frac{\gamma^{2 N-2}}{\left(1+\gamma^2\right)^{M}} d \gamma+e^{-4 r} \int_1^{\infty} \frac{1}{\left(1+\gamma^2\right)^{M}} d \gamma \\
    = & e^{-4 r} e^{-8 r(N-1)}  \frac\pi 2 c^{(M-N)}_N+e^{-4 r}\frac{1}{2 M-1}.
    \end{aligned}
    \end{equation}
    Concerning the lower bound, we have
    \begin{align}
    & \int_0^{\infty} \frac{1}{\left(1+e^{8 r} \gamma^2\right)^M}\left(\frac{\gamma^2}{1+\gamma^2}\right)^N \frac{1}{\gamma^2} d \gamma 
    \geq \int_1^{\infty} \frac{1}{\left(1+e^{8 r} \gamma^2\right)^M} \frac{1}{1+\gamma^2} 2^{-N+1} d \gamma \\
    \geq & \int_1^{\infty} \frac{1}{\left(1+e^{8 r} \gamma^2\right)^{M+1}} 2^{-N+1} d \gamma 
    = \int_{e^{4 r}}^{\infty} \frac{1}{\left(1+\gamma^2\right)^{M+1}} d \gamma 2^{-N+1} e^{-4 r} \\
    \geq & \int_{e^{4 r}}^{\infty} \gamma^{-2(M+1)} d \gamma 2^{-(N+M)} e^{-4 r} = \frac{1}{2^M 2^N}\exp(-8(M+1)r)\frac{1}{2M+1}.
    \end{align}
    The case $M \leq N$ is analogous.
\end{proof}
Before we proceed to estimate the terms in the Volterra integral equation~\eqref{eq:volterrasimple}, we are going to provide precise estimates useful for the terms involving $H_1$ and $H_2$.

\subsection{Bounds for the terms involving $H_1$ and $H_2$}
The series expansion for $\Gamma^*_1$ and $\Gamma^*_2$ induces an analogous series expansion for $H_1$ and $H_2$ with coefficients $\aff_k$, $\bff_k$ as follows:
\begin{equation}\label{eq:afbf}
    H_1 = \gamma^{-2} \sum_{k \geq 1} \aff_k \Big( \frac{\gamma^2}{1+\gamma^2}\Big)^k, \qquad \qquad H_2 = \gamma^{-2} \sum_{k \geq 1} \bff_k \Big( \frac{\gamma^2}{1+\gamma^2}\Big)^k.
\end{equation}
An analysis of the sequence $\aff_k$, $\bff_k$ reveals that $|\aff_k| \leq |\aff_4|$, $|\bff_k| \leq |\bff_4|$ whenever $k \geq 4$.

We let $H_1^{(N)}$ and $H_2^{(N)}$ be defined by
$$
H_1^{(N)} =  \gamma^{-2} \sum_{k =1}^N \aff_k \Big( \frac{\gamma^2}{1+\gamma^2}\Big)^k, \qquad \qquad H^{(N)}_2 = \gamma^{-2} \sum_{k =1}^N \bff_k \Big( \frac{\gamma^2}{1+\gamma^2}\Big)^k.
$$
 
\begin{lemma}\label{lem:hs}
    Let $N \geq 1$ be a positive integer\footnote{We are going to choose $N =4$.}, and $\ell_N := \sqrt{a_N^2 + b_N^2}$. We have, for $i = 1, 2$,
\begin{equation}
    H_i^{(N)} - \ell_N \Big(\frac{\gamma^2}{1+\gamma^2} \Big)^N \leq H_i \leq H_i^{(N)} + \ell_N \Big(\frac{\gamma^2}{1+\gamma^2} \Big)^N
\end{equation}
\end{lemma}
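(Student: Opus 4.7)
The plan is to reduce the truncation error $H_i - H_i^{(N)}$ to a tail of the profile series $\sum_{k>N} a_k u^k$ (or $\sum_{k>N} b_k u^k$), where $u := \gamma^2/(1+\gamma^2)$, and then apply the spectral gap from Lemma~\ref{lem:spgap}. Starting from $H_i = \gamma^{-2}\bar F^*_i$ together with the identity $\gamma^{-2}u^k = (1-u)u^{k-1}$, and using both $\bar F^*_1 = 2\gamma\p_\gamma\bar\Gamma_1^* + \tfrac52\bar\Gamma_1^*$ and the elementary relation $2\gamma\p_\gamma u^k = 4k(u^k - u^{k+1})$, a direct bookkeeping of coefficients would yield the key identity
\begin{equation*}
\sum_{k > N} \aff_k\, u^k \;=\; 4(1-u)\sum_{k>N} k\, a_k\, u^k \;+\; \tfrac52 \sum_{k>N} a_k\, u^k \;-\; 4N\, a_N\, u^{N+1},
\end{equation*}
and its analogue for $\bff$ in which $(a_k,\tfrac52)$ is replaced by $(b_k,4)$.

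Dividing by $\gamma^2$ one then obtains the representation
\begin{equation*}
H_1 - H_1^{(N)} \;=\; 4(1-u)^2 \sum_{k>N} k\, a_k\, u^{k-1} \;+\; \tfrac52 (1-u) \sum_{k>N} a_k\, u^{k-1} \;-\; 4N\, a_N\, (1-u)\,u^N,
\end{equation*}
in which each of the three pieces factors as $u^N$ times a tail sum in $u$. At this point I would invoke Lemma~\ref{lem:spgap}: since the $2$-norms $|v_k|_2 = \sqrt{a_k^2 + b_k^2}$ are non-increasing (and in fact polynomially decaying, roughly like $k^{-1.1}$) for $k\geq N$, one obtains the entrywise bounds $|a_k|,|b_k|\leq \ell_N$ for every $k\geq N$, together with the quantitative decay $|v_{k+1}|_2 \leq (1 - 1.1/k)|v_k|_2$. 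Summing the resulting arithmetic--geometric series via the elementary identities $(1-u)\sum_{k>N}u^{k-1} = u^N$ and $(1-u)^2 \sum_{k>N} k\,u^{k-1} = u^N(N+1 - Nu)$ then extracts the desired prefactor $u^N$.

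The main obstacle is obtaining the clean constant $\ell_N$ in front of $u^N$: a crude triangle inequality combined only with the uniform bound $|a_k|\leq \ell_N$ would leave a factor growing with $N$ coming from the derivative term $4(1-u)^2\sum_{k>N}k\,a_k\, u^{k-1}$. To close this gap one exploits the Abel-summation structure already visible in the identity above, so that the boundary contribution $-4N a_N u^{N+1}$ cancels algebraically against the leading part of the derivative sum, together with the polynomial decay of $|v_k|_2$ provided by Lemma~\ref{lem:spgap}, which converts the weighted tail $\sum_{k>N}k|a_k|u^{k-1}$ into an $O(\ell_N)$ quantity uniformly in $u\in[0,1)$. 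The case $i=2$ is entirely analogous with $(a_k,\tfrac52)$ replaced by $(b_k,4)$, and the two estimates combine symmetrically into the stated two-sided inequality with common constant $\ell_N = \sqrt{a_N^2 + b_N^2}$.
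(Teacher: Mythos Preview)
Your route is much more elaborate than what the paper does, and the gap you yourself flag is not actually closed in your sketch.

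The paper works directly with the coefficients $\aff_k,\bff_k$ of the series for $H_1,H_2$. The sentence immediately preceding the lemma records that $|\aff_k|\le|\aff_4|$ and $|\bff_k|\le|\bff_4|$ for all $k\ge 4$; with this in hand the proof is a single geometric-tail computation:
\[
\bigl|H_i-H_i^{(N)}\bigr|
=\Bigl|\gamma^{-2}\sum_{k>N}\aff_k\,\beta^k\Bigr|
\le |\aff_N|\,\gamma^{-2}\sum_{k>N}\beta^k
=|\aff_N|\,\beta^N,
\]
using $\gamma^{-2}\sum_{k>N}\beta^k=\beta^N$ (here $\beta=\gamma^2/(1+\gamma^2)$). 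The ``$a_N$'' in the paper's proof is a slightly sloppy placeholder for the $N$-th coefficient of whichever $H_i$ is under consideration; no detour through the profile coefficients $a_k,b_k$ is needed.

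You instead pass back to $a_k$ via $\bar F^*_1=2\gamma\partial_\gamma\bar\Gamma^*_1+\tfrac52\bar\Gamma^*_1$, which introduces the weighted tail $\sum_{k>N}k\,a_k\,u^{k-1}$. Your identities are correct, but the closing step is only asserted: you claim that the boundary term $-4N a_N(1-u)u^N$ together with the $k^{-1.1}$ decay from Lemma~\ref{lem:spgap} collapses the combination $4(1-u)^2\sum_{k>N}k\,a_k\,u^{k-1}+\tfrac52(1-u)\sum_{k>N}a_k\,u^{k-1}-4Na_N(1-u)u^N$ down to $\ell_N u^N$. No inequality is actually produced to that effect, and a naive bound using $|a_k|\le \ell_N(k/N)^{-1.1}$ does not deliver the constant $\ell_N$ on the nose. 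So the precise statement of the lemma is not established by your argument as written, whereas the paper obtains it in one line by exploiting the monotonicity of $\aff_k,\bff_k$ directly.
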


\begin{proof}
We have, letting $\beta = \frac{\gamma^2}{1+\gamma^2}$, for $i = 1$ (the case $i = 2$ is the same)
\begin{equation}
    |H_i - H_i^{(N)}| \leq |a_N| \gamma^{-2} \sum_{k \geq N+1} \beta^k =|a_N| \Big( \frac{\gamma^2}{1+\gamma^2}\Big)^N.
\end{equation}
\end{proof}

\subsection{Computing bounds for the Picard iterates symbolically}

In this section, we are going to compute precise bounds for the Picard iterates. in view of~\eqref{eq:masterrhs}, we have to estimate the quantities
\begin{equation}
    -\frac{2}{\pi}\int_0^\infty\Big(H_1 \mathring{w}_1 + H_2 \mathring{w}_2  \Big) d\gamma, \qquad \qquad K_2(t)
\end{equation}
in time.

Let $N_3, N_4 \in \mathbb{N}$, $N_3, N_4 \geq 1$. According to the two previous sections, we bound
\begin{align}
    &H_i^{(N_3)} - \ell_N \Big(\frac{\gamma^2}{1+\gamma^2} \Big)^{N_3} \leq H_i \leq H_i^{(N_3)} + \ell_N \Big(\frac{\gamma^2}{1+\gamma^2} \Big)^{N_3},\\
    &\mathring{w}_1^{(N_4)} + \alpha_{\ell} \leq \mathring{w}_1 \leq \mathring{w}_1^{(N_4)} + \alpha_{u},\\
    &\mathring{w}_2^{(N_4)} + \beta_{\ell} \leq \mathring{w}_2 \leq \mathring{w}_2^{(N_4)} + \beta_{u}.
\end{align}
Here, $\mathring{w}_1^{(N_4)} := \sum_{j = 0}^{N_4} \mathring{c}_j(t) \frac{1}{(1+\gamma^2)^j}$, $\mathring{w}_2^{(N_4)} := \sum_{j = 0}^{N_4} \mathring{d}_j(t) \frac{1}{(1+\gamma^2)^j}$, and in addition $\alpha_\ell, \beta_\ell, \alpha_u, \beta_u$ are defined in Remark~\ref{rmk:alphabetalu}.

We then plug the bounds arising from Lemma~\ref{lem:etas} into the expression for $B_1$ and $B_2$ in Lemma~\ref{lem:alphabeta}. We also pick $N_3 = 4$ and $N_4 = 25$ (in other words, choosing $M = 25$ in lemma~\ref{lem:alphabeta} and $N = 4$ in lemma~\ref{lem:hs}). This gives, considering the term $\int_0^\infty H_2 \mathring{w}_2 d\gamma$, since\footnote{This is checked symbolically.} $\bff_j \leq 0$ for all $j = 0, \ldots, 25$ and $\mathring{w}_2 \geq 0$,
\begin{align}
    &-\frac 2 \pi \int_0^\infty H_2 \mathring{w}_2 d\gamma \\
    &\leq \frac 2 \pi \int_0^\infty \Big(-H^{(N_3)}_2 + \ell_{N_3} \Big(\frac{\gamma^2}{1+\gamma^2} \Big)^{N_3}\Big) \mathring{w}_2 d\gamma\\
    &\leq  -\frac 2 \pi \int_0^\infty H^{(N_3)}_2 (\mathring{w}^{(N_4)}_2 + \beta_u) d\gamma + \frac 2 \pi \int_0^\infty \ell_{N_3} \Big(\frac{\gamma^2}{1+\gamma^2} \Big)^{N_3}(\mathring{w}^{(N_4)}_2 + \beta_u) d\gamma
\end{align}
Then, the terms containing $\ringw_2^{(N_4)}$ are computed symbolically as follows:
\begin{equation}
     -\frac 2 \pi \int_0^\infty H^{(N_3)}_2 \mathring{w}^{(N_4)}_2 d\gamma = -\sum_{k=1}^{N_3} \sum_{j = 0}^{N_4} \bff_k \mathring{d}_j(t) \frac 2 \pi \int_0^\infty \Big(\frac{\gamma^2}{(1+\gamma^2)}\Big)^k \frac{1}{(1+\gamma^2)^j}\frac{ d\gamma}{\gamma^2} = -\sum_{k=1}^{N_3} \sum_{j = 0}^{N_4} \bff_k \mathring{d}_j(t) c^{(j)}_k. 
\end{equation}
Finally, the terms containing $\beta_u$ are estimated as follows. Recall that  
\begin{align}
&\beta_u(t) = 13 \int_0^t \int_0^w \exp(-23(w-s)) (P_{N_4}- Q_{N_4})(s) \frac{1}{(1+e^{8(t-s)}\gamma^2)^{N_4+1}} ds dw \\
&\qquad \qquad + \underbrace{\int_0^t \frac{Q_{N_4}(w)}{(1+ e^{8(t-w)}\gamma^2)^{N_{4}+1}}dw}_{:= (i)},
\end{align}
and that the coefficients $\bff_k$ are negative. Focusing our attention on the terms arising from $(i)$, we have
\begin{align}
    &-\frac 2 \pi \int_0^\infty H^{(N_3)}_2  (i) d\gamma \\
    & =  -\frac 2 \pi \sum_{k=1}^{N_3}\bff_k \int_0^t \int_0^\infty \frac{Q_{N_4}(w)}{(1+ e^{8(t-w)}\gamma^2)^{N_4+1}} \Big(\frac{\gamma^2}{1+\gamma^2}\Big)^k \frac{d\gamma}{\gamma^2} dw\\
    & \leq  -\frac 2 \pi \sum_{k=1}^{N_3}\bff_k \int_0^t  Q_{N_4}(w) \eta_u(8(t-w), N_4+1, k) dw
\end{align}
The other bounds are computed analogously. We collect the results in the following Lemma.

\begin{lemma}\label{lem:picardprecise}
    The Picard iterates $P_n(t)$ of lemma~\ref{lem:picard} satisfy the bounds computed symbolically in the companion file \emph{\texttt{shorttime\_sym.m}}. Moreover, $P_0(0) > 0$ and $P_3(t) > 0$ for all $t \in [0, \log(4)]$.
\end{lemma}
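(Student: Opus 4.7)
The plan is to produce explicit closed-form upper and lower bounds $g_\ell(t) \le g(t) \le g_u(t)$ and $K_2^\ell(t) \le K_2(t) \le K_2^u(t)$ expressed as finite $\mathbb{Q}$-linear combinations of terms $t^a e^{-ct}$ with rational $c$, then propagate these bounds through three iterations of the Volterra recursion
\begin{equation}
P_{n+1}(t) = g(t) - \int_0^t K_2(t-s) P_n(s)\, ds,
\end{equation}
and finally reduce the positivity statement for the resulting lower bound $P_3^\ell(t)$ on $[0, \log 4]$ to a univariate polynomial sign question that can be settled exactly. The key is that every constituent piece is already ``symbolic'': the coefficients $\mathring{c}_j(t), \mathring{d}_j(t)$ of Lemma~\ref{lem:w1} are explicit rational-exponential functions via the $N_j$ recursion; the $\gamma$-moments $c_k^{(j)}$ are rational multiples of Gamma values; and the error quantities $\alpha, \beta$ and the tails of $H_1, H_2$ are controlled by Lemma~\ref{lem:alphabeta} and Lemma~\ref{lem:hs} respectively, again by rational-exponential expressions after applying Lemma~\ref{lem:etas}.

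Concretely I would substitute the decompositions $\mathring{w}_i = \mathring{w}_i^{(N_4)} + \mathrm{err}_i$ (with $N_4 = 25$) and $H_i = H_i^{(N_3)} + \mathrm{err}_i'$ (with $N_3 = 4$) into the expressions for $g(t)$ and $K_2(t)$ from Lemma~\ref{lem:volterra}. The ``main'' contributions are finite double sums whose $\gamma$-integrals evaluate exactly to $c_k^{(j)}$, yielding closed rational-exponential expressions in $t$. The ``error'' contributions are nested time integrals of $P_{N_4}, Q_{N_4}$ against $\eta_u$ or $\eta_\ell$; since $\eta_u, \eta_\ell$ are finite sums of terms $e^{-8\lambda(t-s)}$ with rational exponents and $P_{N_4}, Q_{N_4}$ are rational-exponential, each such convolution evaluates symbolically to a closed-form rational-exponential. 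A lower bound $P_3^\ell$ is then assembled by alternating $g_\ell, g_u$ and $K_2^\ell, K_2^u$ through three convolutions, exploiting nonnegativity of $K_2$ (guaranteed by Lemma~\ref{lem:PQ}) to control the signs introduced at each step.

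The base case $P_0(0) = g(0) > 0$ is then an evaluation of an explicit rational expression at $t = 0$. For $P_3^\ell(t) > 0$ on $[0, \log 4]$, I would substitute $u = e^{-t}$ so that $u \in [1/4, 1]$; after clearing a common power of $u$, the inequality becomes a polynomial-in-$u$ sign question on a compact rational interval, possibly mixed with a few $t$-factors that can be bounded by their secant majorants on the finite interval. The resulting univariate polynomial positivity question with rational coefficients on $[1/4, 1]$ is decidable exactly by a Sturm sequence, or by inspection after factoring.

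The main obstacle is bookkeeping and tightness. At $N_4 = 25$ the coefficients $\mathring{c}_j, \mathring{d}_j$ already carry fairly complicated rational-function dependence on $\xi$, and each Picard convolution roughly multiplies the number of terms; producing $P_3^\ell$ without computer assistance is unrealistic, which is exactly why the symbolic step is necessary. Secondly, Figure~\ref{fig:shorttime} shows that the positivity margin of $P_3^\ell$ is already small near $t = \log 4$, so $N_3$ and $N_4$ must be chosen large enough that the accumulated error arising from Lemma~\ref{lem:etas} and Lemma~\ref{lem:hs}, after three convolutions, does not swamp the positive main term on the right half of the interval. Much of the work in the companion file \texttt{shorttime\_sym.m} consists precisely of carrying out these truncations and error propagations in exact arithmetic so that the final Sturm-type sign verification succeeds.
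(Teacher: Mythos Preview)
Your construction of the symbolic bounds for $g$ and $K_2$ matches the paper's exactly: truncate $\mathring w_i$ at $N_4=25$ and $H_i$ at $N_3=4$, evaluate the main double sums via the $c_k^{(j)}$, and control the remainders through Lemmas~\ref{lem:alphabeta}, \ref{lem:etas}, and~\ref{lem:hs}. The genuine divergence is in the final sign verification. The paper does \emph{not} substitute $u=e^{-t}$ or invoke Sturm sequences. Instead it (i) replaces each factor $e^{-t}$ in the rational-exponential lower bound by a truncated Taylor polynomial (the routine \texttt{makepoly}), producing a polynomial in $t$ with rational coefficients, and (ii) evaluates this polynomial exactly on a fine grid, using the separately proved bound $|\Upsilon'(t)|\le 20000$ on $[0,\log 3]$ and $|\Upsilon'(t)|\le 5000$ on $[\log 3,\log 4]$ (Remark~\ref{rmk:derivative}) to certify positivity between grid points. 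The paper also splits the interval, checking only $P_1>0$ on $[0,\log 3]$ and $P_3>0$ on $[\log 3,\log 4]$. Your substitution runs into a difficulty you only half-acknowledge: after three convolutions the lower bound is a sum of terms $c\,t^m e^{-nt}$ with $m\ge 1$ in many places (resonant exponents produce polynomial prefactors), so in the variable $u$ these become $c(-\log u)^m u^n$, which is genuinely transcendental; bounding \emph{all} such $t$-factors by secant majorants is considerably more work than a few corrections. The paper's choice to pass to polynomials in $t$ is better adapted to the $t^m e^{-nt}$ structure actually produced, since the $t^m$ prefactor is already polynomial and only $e^{-nt}$ needs replacing. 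Either route can be made rigorous; the paper's avoids the transcendental-in-$u$ issue at the cost of needing the auxiliary derivative bound from \texttt{derivative\_sym.m}.
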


\begin{proof}[Proof of lemma~\ref{lem:picardprecise}]
The bounds have been obtained in the above section. Concerning the lower bound for $P_3(t)$, we first show (symbolically) that the derivative $\Upsilon'(t)$ does not exceed $20000$ in magnitude. With this knowledge, we evaluate (again symbolically) the expression\footnote{After replacing it with an appropriate polynomial lower bound whose coefficients are symbolic fractions.} for $P_3(t)$ on a fine enough grid, which suffices to show that $P_3(t) > 0$ for $t \in [0, \log(4)]$. The details of the symbolic approach are described in section~\ref{sec:shorttimeproof}.
\end{proof}

This in particular shows lemma~\ref{lem:picard}, which concludes the proof.

\subsection*{Data availability statement}

Data sharing not applicable to this article as no datasets were generated or analysed during the current study.

\subsection*{Conflict of interest statement}

The authors have no competing interests to declare that are relevant to the content of this article.

\appendix

\addtocontents{toc}{\protect\setcounter{tocdepth}{0}}

\section{Proofs by symbolic calculation}\label{sec:symbolicproof}
In this section, we outline the parts of several proofs in the paper which require symbolic calculation, with an emphasis on how the companion files are used to obtain the proof.

\subsection{Proof of Lemma~\ref{lem:PQ}}\label{sec:pqproof}

This proof is carried out in the companion file \texttt{PQ\_sym.m}. After some technical preparations, which are described in the file (spanning sections A through E), the main algorithm is contained in Section F. We reduce the computation to showing that a certain polynomial $B(t)$ with rational (symbolic) coefficients is positive for $t \in [0,1]$. We notice that $B(t) = \texttt{pospoly}(t) + \texttt{negpoly}(t)$, where $\texttt{pospoly}(t)$ (resp. $\texttt{negpoly}(t)$) is a polynomial with positive (resp. negative) coefficients. We then implement the following algorithm. We start by showing $\texttt{pospoly}(0) + \texttt{negpoly}(0) > 0$. Then, we increase $t_{\text{inner}}$ by amounts of $1/100$ as long as $\texttt{pospoly}(0) + \texttt{negpoly}(t_{\text{inner}}) > 0$. When $\texttt{pospoly}(0) + \texttt{negpoly}(t_{\text{inner}}) < 0$, we set $t_{\text{outer}} := t_{\text{inner}}$, and we restart the procedure from $t_{\text{outer}}$: we compute $\texttt{pospoly}(t_{\text{outer}}) + \texttt{negpoly}(t_{\text{inner}})$ and keep increasing $t_{\text{inner}}$ while $t_{\text{outer}}$ is kept fixed as long a s $\texttt{pospoly}(t_{\text{outer}}) + \texttt{negpoly}(t_{\text{inner}})>0$. This procedure is then repeated until $t_{\text{inner}}$ surpasses $1$. If that happens, due to the monotonicity of both polynomials, this shows that $B(t)$ is positive between $0$ and $1$. Note finally that, since all computation are exact (they only involve symbolic fractions), this constitutes a rigorous proof.

\subsection{Proof of Lemma~\ref{lem:longtime}} \label{sec:prooflongtime}

The proof is carried out in the companion file \texttt{longtime\_sym.m}. We proceed section by section.

\subsubsection{Section A}
The relevant values are computed which are then fed into the bounds in Section~\ref{sec:linfty}. We provide a legend for the various variable names appearing in Section A. Here $(a_1, \ldots, a_n) \widetilde{\geq} (b_1, \ldots, b_n)$ means $a_k \geq |b_k|$ for all $k = 1, \ldots, n$:
\begin{align}
&( \texttt{F10,
F20,
Gam100,
Gam200,
F10mod,
F20mod,
SG10,
G1bar0,
G2bar0,}\\
& \qquad \texttt{
Gamb10,
Gamb20,
Gam100,
Gam200})\\
& \widetilde{\geq} \  (F_1^{*}(0), F_2^{*}(0), \Gamma^*_1(0), \Gamma^*_2(0), F_1^{*, \text{mod}}(0), F_2^{*, \text{mod}}(0), \int_0^\infty  \bar{F}_1^{*, \text{mod}}(0) \gamma^{-2}d\gamma, \bar{G}_1(0), \bar{G}_2(0), \\
& \qquad \bar{\Gamma}_1^*(0) , \bar{\Gamma}_2^*(0), \Gamma^*_1(0), \Gamma^*_2(0)).
\end{align}

\subsubsection{Section B}
Computation of the bounds in Section~\ref{sec:linfty}. Here (note that on the LHS we have functions of $t$, and the bounds should be interpreted pointwise in $t$),
\begin{align}
&( \texttt{SG1,
G1barpG2bar,
G1bar,
Gamb2,
Gamb1,
Gam20,
Gam10})\\
& \widetilde{\geq} \  (\int_0^\infty G_1 d\gamma, |\bar{G}_1(t)|_\infty + |\bar{G}_2(t)|_\infty, |\bar{G}_1(t)|_\infty , |\bar{\Theta}_2^{\text{mod}}(t)|_\infty, |\bar{\Theta}_1^{\text{mod}}(t)|_\infty, \Theta^{\text{mod}}_2(t, \gamma = 0), \Theta^{\text{mod}}_1(t,\gamma=0)).
\end{align}

\subsubsection{Section C}
Computation of the final bound~\eqref{eq:finalupsilon} (note that $-45.6$ is used as a lower bound for the value of the Laplace transform at $-1$ of $\strokedint \Theta^{\text{mod}}_1$).

\subsubsection{Section D}
Proof that the evolution is positive after time $\log(4)$. After substituting $t \to -\log(t)$, we factor the resulting polynomial $B(t)$:
\begin{equation}
    B(t) = c t B_1(t),
\end{equation}
where $c$ is a positive constant, and $B_1$ is of degree $27$ and has integer coefficients. We write
$
B_1 = \sum_{n=0}^{27} b_n t^n.
$
We proceed to check that
$
b_0 - \max_{t \in [0,1/4]} \sum_{n=1}^{27} |b_n| t^n  \geq b_0 -  \sum_{n=1}^{27} |b_n| 4^{-n} >0
$
which concludes the proof.

\subsection{Proof of Lemma~\ref{lem:picardprecise}}\label{sec:shorttimeproof}

The proof is carried out in the file \texttt{shorttime\_sym.m}. We divide the companion file into several sections, which we describe in detail.

\subsubsection{Section A}
This section serves the purpose of computing the following quantities:
\begin{equation}
    \texttt{af, bf, Ifl, Ifu, Jfl, Jfu}.
\end{equation}
The first two quantities are the exact coefficients in the series expansion of $\bar F^{*}_1$ (resp. $\bar{F}^*_2$), which have already appeared in display~\eqref{eq:afbf}. The other quantities are lower and upper bounds for the following integrals. Let\footnote{Note that indices in Matlab start from $1$.} $j \geq 0$:
\begin{align}
    &\texttt{Ifl(j+1)} \leq \frac 2 \pi \int_0^\infty \frac{\bar F_1^*}{(1+\gamma^2)^j}\frac{d\gamma}{\gamma^2} \leq  \texttt{Ifu(j+1)},\\
    &\texttt{Jfl(j+1)} \leq \frac 2 \pi \int_0^\infty \frac{\bar F_2^*}{(1+\gamma^2)^j}\frac{d\gamma}{\gamma^2} \leq  \texttt{Jfu(j+1)}.
\end{align}

\subsubsection{Section B}
In section B1, we compute the coefficients $\texttt{cring(j)}:= \hat{\mathring{c}}_{j-1}(\xi)$, and $\texttt{dring(j)} := \hat{\mathring{d}}_{j-1}(\xi)$. In section B2, we compute 
$$
\texttt{C(j+1,k)} := c^{(j)}_k = \frac 2 \pi \int_0^\infty \frac{\gamma^{2(k-1)}}{(1+\gamma^2)^{j+k}} d\gamma,
$$
where $j = 0, 1, \ldots$ and $k = 1, 2, \ldots$

In section B3 we compute $P_{N_4}(t)$ and $Q_{N_4}(t)$.

\subsubsection{Section C}
We compute upper and lower bounds for $K_2$:
\begin{equation}
    \texttt{K2l(t)} \leq K_2(t) \leq \texttt{K2u(t)}.
\end{equation}
These bounds are obtained by calling the function $\texttt{alphabeta}$, which uses the bounds for $\alpha$ and $\beta$ in Lemma~\ref{lem:alphabeta}, and the bounds for $\eta$ in Lemma~\ref{lem:etas} in order to estimate 
$$
\int_0^\infty \frac{\beta}{(1+\gamma^2)}d\gamma
$$
from above and from below. The remaining terms are computed symbolically.

\subsubsection{Section D}

We compute upper and lower bounds for the expression
\begin{equation}
    - \int_0^\infty (H_1 \mathring{w}_1 + H_2 \mathring{w}_2) d\gamma
\end{equation}
The bounds are again computed calling the function $\texttt{alphabeta(m,n,P,Q,C)}$, which computes bounds for the integrals\footnote{Note that the dependence on $\texttt{n}$ when calling the function $\texttt{alphabeta}$ refers to the truncation index $N_4$ (truncation of the weight). In our case, we are choosing $\texttt{n} = N_4 = 25$. Moreover, $\texttt{P}$ and $\texttt{Q}$ refer to the functions $P_{(N_4)}$ and $Q_{(N_4)}$, and finally $\texttt{C}$ refers to the matrix $c^{(j)}_k$ for convenience, since such matrix is quite expensive to compute (so, we compute it only once at the beginning).}:
\begin{equation}\label{eq:alphabetadescription}
    \int_{0}^\infty  \Big(\frac{\gamma^2}{1+\gamma^2} \Big)^m \alpha \frac{d\gamma}{
\gamma^2}, \qquad \qquad \int_{0}^\infty \Big(\frac{\gamma^2}{1+\gamma^2} \Big)^m \beta \frac{d\gamma}{
\gamma^2},
\end{equation}

\subsubsection{Section E}
We compute upper and lower bounds for the zeroth iterate of the Volerra equation~\eqref{eq:mainvolte}, that is the function $g(t)$ appearing on the RHS. More precisely, in Section E1 we compute upper and lower bounds for
\begin{equation}
    \hat K_1 \frac{\xi}{\xi+1}\Big(\frac{1}{\xi+6} F_{*,1}(0) + \frac {13}{(\xi+6)(\xi+9)} F_{*,2}(0)\Big).
\end{equation}
The Laplace transform of the lower bound is computed in \texttt{firsll}, and the Laplace transform of the upper bound is computed in \texttt{firstul}.

In Section E2, on the other hand, we compute the lower bound (\texttt{secondPltot}) and the upper bounds (\texttt{secondPutot}) of the inverse Laplace transform of
\begin{equation}
    \frac{\xi}{\xi+1}\Big(1-\frac {130}{(\xi+6)(\xi+9)}\Big) \frac 1 {10} \frac{2}\pi \Big( \frac{130}{\xi(\xi+23)} \mathcal{I} - \int\Big(H_1 \hat{\mathring{w}}_1 + H_2 \hat{\mathring{w}}_2  \Big) d\gamma\Big)
\end{equation}

In Section E3, we consider the lower and upper bounds just obtained, and we replace them with lower and upper bounds whose coefficients are still exact fractions, but whose numerators and denominators are significantly smaller in size. This allows for faster calculations later on.

\subsubsection{Section F}
We compute the Picard iterates of the Volterra equation~\eqref{eq:mainvolte} up until the third iterate.

\subsubsection{Section G}
We show that the Picard iterate $P_1(t)$ is positive for time $t \in [0, \log(3)]$, and that the Picard iterate $P_3(t)$ is positive for time $[\log(3), \log(4)]$ Since in particular $P_1$ lower bounds $P_3$, this shows that $P_3(t)>0$ for all $t \in[0, \log(4)]$. The functions are time-stepped using the following bounds for the derivative (see Remark~\ref{rmk:derivative}):
\begin{align}
    &|\Upsilon'(t)| \leq 20000 \qquad \text{when} \qquad t \in [0, \log(3)],\\
    &|\Upsilon'(t)| \leq 5000 \qquad  \ \text{when} \qquad t \in [\log(3), \log(4)].
\end{align}
Note importantly that the functions\footnote{To be more precise, the lower bounds for these functions, which are computed in the companion file \texttt{shorttime\_sym.m}.} $P_1(t)$ and $P_2(t)$, as computed by the above procedure, are sums of terms of the type $c t^m e^{-nt}$, where $m,n $ are positive integers and $c$ is a constant. In order to be sure that the symbolic calculations are exact, before time-stepping, we replace the lower bounds for the functions $P_1(t)$ and $P_2(t)$ by polynomial lower bounds. This is accomplished by the function \texttt{makepoly(symexpr, N)}, which, given a symbolic expression (\texttt{symexpr}) containing sums of terms of the type $c t^m e^{-nt}$ replaces each of those terms with $c t^m (L_N(t))^n$ if $c>0$, and $c t^m(U_N(t))^n$ if $c < 0$, where $L_N(t)$ (resp.~$U_N(t)$) are polynomial lower (resp.~upper) bounds for $\exp(-t)$ (obtained by taking the Taylor polynomial at $0$ truncated at $N$). In addition, after carrying out the above procedure, the function \texttt{makepoly} replaces the above polynomial with yet another polynomial lower bound whose (symbolic, rational) coefficients have smaller denominators than the original expression (to speed up calculations).

\subsubsection{Section H} We compute the functions \texttt{eta} in Section H1 and \texttt{alphabeta} in section H2. Given $\texttt{m, n}$ (and \texttt{C}, which is passed to the function for convenience), the function $\texttt{eta(m,n,C)}$ returns two functions $\texttt{low}$ and $\texttt{upp}$ (of the symbolic variable $\texttt{tt}$), which are respectively lower and upper bounds for the expression
\begin{equation}
    \int_0^\infty \Big(\frac{\gamma^2}{1+\gamma^2}\Big)^{\texttt{n}} \frac{1}{(1+\exp(8 \texttt{tt}))^{\texttt{m}}} \frac{d\gamma}{\gamma^2}
\end{equation}
computed according to Lemma~\ref{lem:etas}. The function \texttt{alphabeta} has already been described in display~\eqref{eq:alphabetadescription}.

\section{Symbolic bounds}\label{sec:appb}
In this section, we first compute $L^\infty$ bounds for the profile and some associated quantities, and then we compute precise values for integrals involving the profile.

\subsection{Computation of $L^\infty$ bounds}\label{sec:linftyapp}
We use 
$$
\bar \Gamma_i = S^i_N + E^i_N,$$
with $i \in \{1,2\}$, where $S_N := \sum_{j=1}^N a_j \Big(\frac{\gamma^2}{1+\gamma^2} \Big)^j$, to get

\begin{align}
&|\bar \Gamma_1|_\infty \leq |S_N^1|_\infty + |E_N^1|_\infty \leq |S_N^1|_\infty + 10 M^{1.1} N^{-0.1} \sqrt{|a_M|^2 + |b_M|^2}.\\
&|\bar \Gamma_2|_\infty \leq |S_N^2|_\infty + |E_N^2|_\infty \leq |S_N^2|_\infty  + 10 M^{1.1} N^{-0.1} \sqrt{|a_M|^2 + |b_M|^2}.
\end{align}
We pick $M= 5000$ and $N = 5000$. A symbolic calculation (contained in the companion file \texttt{profile.m}, section A) yields
\begin{equation}
    |E_N^1|, |E_N^2|_\infty \leq 1/2.
\end{equation}
On the other hand, again by a symbolic calculation,
\begin{equation}
    |S^1_N|_{\infty} \leq 49.5, \qquad \qquad |S^2_N|_{\infty} \leq 33, 
\end{equation}
which finally yields
\begin{equation}
    |\bar\Gamma_1|_\infty \leq 50, \qquad \qquad |\bar \Gamma_2|_\infty \leq 33.5.
\end{equation}
Since $\bar\Gamma_1 \leq 0$ and $\bar \Gamma_2 \leq 0$, together with $\Gamma_1(0) \in (44,45)$ and $\Gamma_2(0) \in (20,21)$, we also have
\begin{equation}
    |\Gamma_1|_\infty \leq 50, \qquad \qquad | \Gamma_2|_\infty \leq 33.5.
\end{equation}
In addition,
\begin{equation}
    |\bar G_1^{(\Gamma_1)}|_\infty \leq |a(1)| + |a(2)| \leq 32, \qquad \qquad |\bar G_2^{(\Gamma_1)}|_\infty   \leq |b(1)| + |b(2)| \leq 27.
\end{equation}

We turn to estimating the $L^\infty$ norms of $\bar F^{1, \text{mod}}_*$ and $\bar F^{2, \text{mod}}_*$. Recall that the constant $c = 237/46$. We also recall that, by the profile equation,
\begin{align}
    | \bar F^{1, \text{mod}}_* |_\infty \leq |-(c+1/2) S_N^1 + 13/2 S_N^2| + (c+1/2)|E_N^1|_\infty + 13 |E_N^2|_\infty \leq 157 + 10 = 167.
\end{align}
Also,
\begin{align}
    | \bar F^{2, \text{mod}}_* |_\infty \leq \Big| \frac{5}{1+\gamma^2}\Big(S_N^1 + \gamma^2 A\Big)\Big|_\infty + 5|E_N^1|_\infty  + (c+1/2)|E_N^2|_\infty \leq 110 + 3 = 113.
\end{align}
Let $G^{(F^{\text{mod}}_*)} := \frac{\bar{F}^{\text{mod}}_*}{\gamma^2}$.  Since the coefficients $\text{afm}_k$ and $\text{bfm}_k$ of the functions $ F^{1, \text{mod}}_*$ (resp  $F^{2, \text{mod}}_*$) satisfy $|\text{afm}_k| \leq |\text{afm}_2|$ for $k \geq 2$, and $|\text{bfm}_k| \leq |\text{bfm}_2|$ for $k \geq 2$, we have\footnote{This is easily seen by the fact that if $f(\gamma) = \sum_{k \geq 1} r_k \Big(\frac{\gamma^2}{1+\gamma^2} \Big)^k$, then $\overline{\bar{f}/\gamma^2} = - \frac{\gamma^2}{1+\gamma^2} r_1 + \frac{1}{1+\gamma^2}\sum_{k \geq 1}r_k \Big(\frac{\gamma^2}{1+\gamma^2} \Big)^k$.}
\begin{equation}
    |\bar G_1^{(F^{\text{mod}}_*)}|_\infty \leq |\text{afm}_1| + |\text{afm}_2|\leq 68, \qquad \qquad |\bar G_2^{(F^{\text{mod}}_*)}|_\infty \leq |\text{bfm}_1| + |\text{bfm}_2| \leq 147. 
\end{equation}
Finally, we need bounds on $L_\theta F^{\text{mod}}_* = L_\theta F_*$. We have $F_* = (- \frac 12 \Gamma_1^* + \frac{13}2 \Gamma_2^*, \frac{5}{1+\gamma^2}(\Gamma_1^* - 26)-\frac 12 \Gamma_2^*)$, so that
\begin{equation}
    L_\theta F^* = \Big(- \frac{39}{2} \Gamma_2, \quad  \frac{1}{1+\gamma^2}  \big( - 40 (\Gamma_1-26)\gamma^2  + 15 \Gamma_1 - 9 \cdot 5 \cdot 26 + 13\cdot 5 \strokedint \Gamma_2 \big)\Big)
\end{equation}
Letting $\zeta:= L_\theta F^*$, we then have\footnote{Note that these estimates can be dramatically improved, but they are only needed to check positivity at the gridpoints, so we won't do it here.}
\begin{align}
    &|\zeta_1|_\infty \leq \frac{39} 2 |\Gamma_2|_\infty \leq 654,\\
    &|\zeta_2|_\infty \leq 55|\Gamma_1|_\infty + 26 \cdot 40 + 45 \cdot 26 + 13 \cdot 5 \cdot |\Gamma_2|_\infty \leq 7200,\\
    &|\bar \zeta_1|_\infty \leq \frac{39} 2 |\bar \Gamma_2|_\infty \leq 654,\\
    &|\bar \zeta_2|_\infty \leq 70|\bar \Gamma_1|_\infty + 26 \cdot 40 + 45 \cdot 26 + 13 \cdot 5 \cdot |\Gamma_2|_\infty \leq 8000,\\
    &|\bar G^{(\zeta)}_1|_\infty \leq \frac{39} 2 |\bar G^{(\Gamma)}_2|_\infty \leq 527,\\
    &|\bar G^{(\zeta)}_2|_\infty \leq  40|\bar \Gamma_1|_\infty + |A| + 15 |\Gamma_1|_\infty + 15 |\bar \Gamma_1|_\infty + 15 |G_1^{(\Gamma)}|_\infty + 45 \cdot 26 + 13 \cdot 5 \cdot |\Gamma_2|_\infty \leq 8100,
\end{align}
We record the bounds obtained so far in the following lemma.
\begin{lemma}
    The functions $F^{*, \text{mod}}_1$,  $F^{*, \text{mod}}_2$, and $L_\theta F^{*, \text{mod}}$ satisfy the $L^\infty$ bounds in the file \emph{\texttt{profile.m}}, section A.
\end{lemma}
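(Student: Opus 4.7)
The plan is to reduce all the required $L^\infty$ bounds to a finite symbolic computation by leveraging the series expansion of Lemma~\ref{lem:profile} together with the quantitative tail estimate coming from Lemma~\ref{lem:spgap}. Write $\bar\Gamma_i = S_N^i + E_N^i$, where $S_N^i = \sum_{j=1}^N a_j^{(i)} \bigl(\tfrac{\gamma^2}{1+\gamma^2}\bigr)^j$ is a polynomial in the bounded variable $\sigma := \gamma^2/(1+\gamma^2) \in [0,1]$ with exact rational coefficients (so $\|S_N^i\|_\infty$ can be computed symbolically by maximising over $\sigma \in [0,1]$), and the tail $E_N^i$ is controlled by the iterated spectral estimate
$$
\sqrt{|a_k|^2 + |b_k|^2} \le (k/M)^{-1.1}\sqrt{|a_M|^2+|b_M|^2} \qquad (k \ge M \ge 5),
$$
which sums to $\|E_N^i\|_\infty \lesssim M^{1.1} N^{-0.1}\sqrt{|a_M|^2 + |b_M|^2}$.

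Taking $M = N = 5000$ pushes the tail below $1/2$ (verified symbolically), and $\|S_N^i\|_\infty$ is evaluated symbolically to yield $\|\bar\Gamma_1\|_\infty \le 50$, $\|\bar\Gamma_2\|_\infty \le 33.5$. Since $\Gamma_i = \bar\Gamma_i + \Gamma_i(0)$ and the explicit boundary values $\Gamma^*_1(0), \Gamma^*_2(0)$ were computed in Section~\ref{sec:profile}, the same bounds transfer to $\|\Gamma_i\|_\infty$. For $\bar G_i^{(\Gamma)} := \bar\Gamma_i/\gamma^2$, write everything in terms of $\sigma$ and note that the leading two coefficients $|a_1|+|a_2|$ and $|b_1|+|b_2|$ already dominate by Lemma~\ref{lem:spgap}, giving the stated bounds directly.

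For $F^{*,\text{mod}}$, use the profile relation $L_\theta \Gamma^* = 0$ to eliminate $D_\theta \Gamma^*$: this gives $F^*_1 = -\tfrac12 \Gamma^*_1 + \tfrac{13}{2}\Gamma^*_2$ and $F^*_2 = \tfrac{5}{1+\gamma^2}(\Gamma^*_1 - 26) - \tfrac12 \Gamma^*_2$. Combined with $c = 237/46$ and the $\Gamma^*$ bounds above, the triangle inequality yields $\|\bar F^{*,\text{mod}}_1\|_\infty \le 167$ and $\|\bar F^{*,\text{mod}}_2\|_\infty \le 113$; the analogous calculation in the variable $\sigma$ (using that $|\text{afm}_k|,|\text{bfm}_k|$ are dominated by their first two terms for $k \ge 2$) yields the $\bar G^{(F^{\text{mod}}_*)}$ bounds. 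Finally, since $L_\theta \Gamma^* = 0$, we have $L_\theta F^{*,\text{mod}} = L_\theta F^*$, which upon applying $L_\theta$ to the expressions for $F^*_1, F^*_2$ and using the profile equation again reduces to a $\Gamma^*$-expression; plugging in the bounds above (together with $|A| \le 20$, which is immediate from the value of $A$ in Section~\ref{sec:profile}) yields the stated $\|M_i\|_\infty$, $\|\bar M_i\|_\infty$, and $\|\bar G_i^{(M)}\|_\infty$ estimates.

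The main obstacle is that the convergence of the series is only polynomial of rate $k^{-1.1}$, so one is forced to take $N$ very large (here $N = 5000$) to push the tail below $1/2$; this is manageable because the symbolic computation of $\|S_N^i\|_\infty$ on $\sigma \in [0,1]$ reduces to comparing rational numbers and can be done exactly in the companion file. A secondary technical point is keeping the symbolic representation of the partial sums tractable after $5000$ matrix multiplications $W_k$; this is handled by periodic simplification of the rational entries, but does not change the structure of the argument.
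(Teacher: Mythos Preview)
Your proposal is correct and follows essentially the same approach as the paper: the same partial-sum/tail decomposition with $M=N=5000$, the same use of the profile equation to rewrite $F^*$ and $L_\theta F^*$ purely in terms of $\Gamma^*$, and the same coefficient-dominance observations for the $\bar G$ quantities, leading to the identical numerical bounds. The only cosmetic difference is that the paper does not explicitly describe the maximisation over $\sigma\in[0,1]$ or the intermediate simplification of rational entries, but these are implementation details that do not affect the structure of the argument.
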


\subsection{Computation of integrals of the profile with error bounds}\label{sec:IJ}
Let us now obtain an expression for the quantities
$$
I_j := \frac 2 \pi \int \frac{\bar \Gamma_1}{\gamma^2} \frac 1 {(1+\gamma^2)^j}d\gamma, \qquad \qquad J_j := \frac 2 \pi \int \frac{\bar \Gamma_2}{\gamma^2} \frac 1 {(1+\gamma^2)^j}d\gamma,
$$
with $j \geq 0$, $j \in \N$.

We have
\begin{equation}
    \frac{\bar \Gamma_1}{\gamma^2} = \sum_{k = 1}^\infty a_k \frac{\gamma^{2k-2}}{(1+\gamma^2)^k}.
\end{equation}
This implies that, fixing $N_1 > 0$,
\begin{equation}
    I_j = \sum_{k = 1}^{N_1} a_k \frac 2 \pi \int_0^\infty \frac{\gamma^{2k-2}}{(1+\gamma^2)^{k+j}} + E_j,
\end{equation}
with $E_j := \sum_{N_1+1}^\infty a_k \frac 2 \pi \int_0^\infty \frac{\gamma^{2k-2}}{(1+\gamma^2)^{k+j}}$.

Letting
\begin{equation}
    c^{(j)}_k := \frac 2 \pi \int_0^\infty \Big(\frac{\gamma^2}{1+\gamma^2} \Big)^k\frac{1}{(1+\gamma^2)^j} \frac{1}{\gamma^2} d\gamma,
\end{equation}
we calculate\footnote{Note that this expression is a multiple of the well-known \emph{beta function}.}
\begin{equation}
   c_k^{(j)} = \frac 1 \pi \frac{\Gamma(1/2 + j) \Gamma( k- 1/2)}{ \Gamma(j + k)}
\end{equation}
where $\Gamma$ denotes the Gamma function.

We compute
$
c_1^{(0)} = 1.
$
For $j = 0$, $k > 1$, we have 
$
c_k^{(0)} = \frac{k-\frac 32}{k-1} c_{k-1}^{(0)}.
$
For $j > 0$, $k \geq 0$
$
c_k^{(j)} = \frac{j - \frac 12}{j+k-1}c_{k}^{(j-1)}.
$
Let $N > 0$, we split the contribution of these integrals into 
$
I^{\leq N}_j , I^{>N}_j , J^{\leq N}_j, J^{> N}_j 
$
truncating the series to index $N$. Using the expression for $c_k^{(j)}$, together with the bounds for $a_j$ and $b_n$ obtained previously in Lemma~\ref{lem:spgap}, we deduce
\begin{equation}
    \frac 2 \pi | I^{>N}_j |, \frac 2\pi |J^{>N}_j| \leq  (|a_N|^2 + |b_N|^2)^{\frac 12} c_N^{(j-1)} .
\end{equation}

A symbolic calculation, contained in \texttt{profile.m} (section B2) implies the following lemma, concerning $I_j$ and $J_j$.

\begin{lemma}
The quantities $I_j$, $J_j$ satisfy the bounds in the file \emph{\texttt{profile.m}}, section B2, for $j \in \{0, 1, \ldots, 34\}$:
\begin{align}
    &\emph{\texttt{Il(j+1)}} \leq I_j \leq \emph{\texttt{Iu(j+1)}},\\
    &\emph{\texttt{Jl(j+1)}} \leq J_j \leq \emph{\texttt{Ju(j+1)}}.
\end{align}
\end{lemma}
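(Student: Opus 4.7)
My plan is to derive the two-sided bounds on $I_j$ and $J_j$ by combining the explicit series expansion of $\bar{\Gamma}_1, \bar{\Gamma}_2$ from Lemma~\ref{lem:profile} with the closed-form recursions for $c_k^{(j)}$ and the spectral-gap estimate from Lemma~\ref{lem:spgap}, carrying out every arithmetic step in exact rational form so that the resulting inequalities are certified.

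First I would fix a truncation index $N_1$ (consistent with the $L^\infty$ computations performed elsewhere in Appendix~\ref{sec:appb}, the natural choice is $N_1 = 5000$). Starting from the exact rationals $a_1 = \frac{13}{14} b_1$ and $b_1 = \frac{35}{27} A$ with $A = -\frac{351}{19}$, the recursion $v_k = W_k v_{k-1}$ from \eqref{eq:W} produces exact rational coefficients $a_k, b_k$ for every $k \leq N_1$. The table of values $c_k^{(j)}$ for $k \leq N_1$ and $j \in \{0, \ldots, 34\}$ is built from $c_1^{(0)} = 1$ together with the two rational recursions $c_k^{(0)} = \frac{k - 3/2}{k - 1} c_{k-1}^{(0)}$ and $c_k^{(j)} = \frac{j - 1/2}{j + k - 1} c_k^{(j-1)}$ displayed in the excerpt, again entirely in $\mathbb{Q}$. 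The partial sums $I_j^{\leq N_1} = \sum_{k=1}^{N_1} a_k c_k^{(j)}$ and $J_j^{\leq N_1} = \sum_{k=1}^{N_1} b_k c_k^{(j)}$ are then exact rational numbers.

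The tail contributions are controlled by two elementary observations. First, since $N_1 \geq 5$, Lemma~\ref{lem:spgap} together with Lemma~\ref{lem:prodottino} gives the uniform bound $\sqrt{a_k^2 + b_k^2} \leq \sqrt{a_{N_1}^2 + b_{N_1}^2}$ for $k \geq N_1$; when the additional sharpness is needed, the refined decay $(k/N_1)^{-1.1}$ is available. Second, swapping summation and integration under the definition of $c_k^{(j)}$ and collapsing the geometric series $\sum_{k > N_1}(\gamma^2/(1+\gamma^2))^k = (\gamma^2/(1+\gamma^2))^{N_1+1}(1+\gamma^2)$ yields the closed-form identity $\sum_{k > N_1} c_k^{(j)} = c_{N_1+1}^{(j-1)} \leq c_{N_1}^{(j-1)}$, valid for $j \geq 1$. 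Combining these,
\[
|I_j^{>N_1}|,\, |J_j^{>N_1}| \leq \sqrt{a_{N_1}^2 + b_{N_1}^2}\, c_{N_1}^{(j-1)},
\]
and the right-hand side is once again an exact rational. Adding and subtracting this error from the partial sums delivers the bounds \texttt{Il(j+1)}, \texttt{Iu(j+1)}, \texttt{Jl(j+1)}, \texttt{Ju(j+1)}; the boundary case $j = 0$ is handled analogously using the additional $(k/N_1)^{-1.1}$ factor, which produces a convergent tail that can be dominated by a geometric rational upper bound.

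The main (and essentially the only) obstacle is the scale of the computation: the decay rate $k^{-1.1}$ from Lemma~\ref{lem:spgap} is slow, so $N_1$ must be taken large ($5000$ here) in order that the tail estimate be sharper than the size of the partial sums themselves; the resulting $5000$ iterations of the $2 \times 2$ rational recursion $v_k = W_k v_{k-1}$ generate fractions of substantial bit-length, and simultaneously one must maintain a $5000 \times 35$ table of rational values $c_k^{(j)}$. This is precisely why the verification is delegated to the companion file \texttt{profile.m}, Section~B2. Since every step is either an exact rational identity or a monotonic bound on rational quantities, and no floating-point arithmetic enters at any stage, the resulting two-sided bounds are rigorous.
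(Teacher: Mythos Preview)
Your approach is essentially the same as the paper's: truncate the series for $\bar\Gamma_i$ at some large $N_1$, compute the partial sums $\sum_{k\le N_1} a_k c_k^{(j)}$ exactly via the rational recursions for $(a_k,b_k)$ and $c_k^{(j)}$, and bound the tail by $\sqrt{a_{N_1}^2+b_{N_1}^2}\,c_{N_1}^{(j-1)}$ using Lemma~\ref{lem:spgap} together with the telescoping identity $\sum_{k>N_1} c_k^{(j)} = c_{N_1+1}^{(j-1)}$; the separate treatment of $j=0$ (where $c_{N_1}^{(-1)}$ diverges and the extra $(k/N_1)^{-1.1}$ decay is needed) is also correct. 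One small slip: the tail bound $\sqrt{a_{N_1}^2+b_{N_1}^2}\,c_{N_1}^{(j-1)}$ is not literally an exact rational since the square root of a rational is generally irrational, but this is harmless---one simply replaces it by any rational upper bound (e.g.\ $|a_{N_1}|+|b_{N_1}|$, or a rational over-approximation of the root), and the rest of the argument goes through unchanged.
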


We consider the quantities
\begin{align}
&I^{F_*}_j :=\frac 2 \pi \int \frac{\bar F^{*}_1}{\gamma^2} \frac 1 {(1+\gamma^2)^j}d\gamma, \qquad \qquad J^{F_*}_j := \frac 2 \pi \int \frac{\bar F^{*}_2}{\gamma^2} \frac 1 {(1+\gamma^2)^j}d\gamma,\\
&I^{F_*, \text{mod}}_j :=\frac 2 \pi \int \frac{\bar F^{*, \text{mod}}_1}{\gamma^2} \frac 1 {(1+\gamma^2)^j}d\gamma, \qquad \qquad J^{F_*, \text{mod}}_j := \frac 2 \pi \int \frac{\bar F^{*, \text{mod}}_2}{\gamma^2} \frac 1 {(1+\gamma^2)^j}d\gamma.
\end{align}
From the profile equation, we also have
\begin{align}
    &I^{F_*, \text{mod}}_j = \frac{13}2 J_j - \Big(\frac 12 + c \Big) I_j,\\
    &J^{F_*, \text{mod}}_j = 5\Big(I_{j+1} + A c^{(j)}_1\Big) -  \Big(\frac 12 + c \Big) J_j,
\end{align}
A symbolic calculation then leads to the following lemma.
\begin{lemma}\label{lem:IJmod}
    The quantities $I^{F_*, \text{mod}}_j$ and $J^{F_*, \text{mod}}_j$ satisfy the bounds in the companion file \emph{\texttt{profile.m}}, section B2:
\begin{align}    
    &\emph{\texttt{Ifl(j+1)}} \leq I^{F_*}_j \leq \emph{\texttt{Ifu(j+1)}},\\
    &\emph{\texttt{Jfl(j+1)}} \leq J^{F_*}_j \leq \emph{\texttt{Jfu(j+1)}},\\
    &\emph{\texttt{Ifmodl(j+1)}} \leq I^{F_*, \text{mod}}_j \leq \emph{\texttt{Ifmodu(j+1)}},\\
    &\emph{\texttt{Jfmodl(j+1)}} \leq J^{F_*, \text{mod}}_j  \leq \emph{\texttt{Jfmodu(j+1)}}.
\end{align}
    
\end{lemma}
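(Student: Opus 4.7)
The plan is to reduce the bounds for $I^{F_*}_j, J^{F_*}_j$ and their modified counterparts to explicit linear combinations of the already-controlled quantities $I_j, J_j$ (whose bounds are provided by the preceding lemma) together with the closed-form constants $c^{(j)}_1$, and then to evaluate those linear combinations symbolically.

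The first step is to rewrite $\bar F^*_1, \bar F^*_2$ purely in terms of $\bar\Gamma_1, \bar\Gamma_2$ by eliminating the derivatives using the renormalized profile equations \eqref{eq:funde1}--\eqref{eq:funde2}. Solving these for $2\gamma\p_\gamma \bar\Gamma_1$ and $2\gamma\p_\gamma\bar\Gamma_2$ and substituting into $\bar F^*_1 = 2\gamma\p_\gamma\bar\Gamma_1 + \tfrac{5}{2}\bar\Gamma_1$ and $\bar F^*_2 = 2\gamma\p_\gamma\bar\Gamma_2 + 4\bar\Gamma_2$, I obtain
\begin{equation}
\bar F^*_1 = -\tfrac{1}{2}\bar\Gamma_1 + \tfrac{13}{2}\bar\Gamma_2, \qquad \bar F^*_2 = \tfrac{5}{1+\gamma^2}\bigl(\bar\Gamma_1 + \gamma^2 A\bigr) - \tfrac{1}{2}\bar\Gamma_2.
\end{equation}
Dividing each identity by $\gamma^2$ and integrating against $\frac{2}{\pi}(1+\gamma^2)^{-j}\,d\gamma$, using that $\frac{1}{\gamma^2(1+\gamma^2)}\cdot\frac{1}{(1+\gamma^2)^j} = \frac{1}{\gamma^2(1+\gamma^2)^{j+1}}$ and that $\frac{2}{\pi}\int_0^\infty (1+\gamma^2)^{-(j+1)}\,d\gamma = c_1^{(j)}$, I arrive at
\begin{equation}
I^{F_*}_j = -\tfrac{1}{2}I_j + \tfrac{13}{2}J_j, \qquad J^{F_*}_j = 5\bigl(I_{j+1} + A c_1^{(j)}\bigr) - \tfrac{1}{2}J_j.
\end{equation}
Subtracting $c$ times the corresponding quantity for $\bar\Gamma$ (i.e. $I_j$ or $J_j$) yields the modified relations stated in the excerpt.

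The second step is purely algebraic: I apply the previously established bounds $\texttt{Il(j+1)} \leq I_j \leq \texttt{Iu(j+1)}$ and $\texttt{Jl(j+1)} \leq J_j \leq \texttt{Ju(j+1)}$ (which cover $I_{j+1}$ as well on the relevant index range) inside the linear combinations above, pairing each coefficient with the same-sided or opposite-sided bound according to its sign. The constants $c = 237/46$, $A = -351/19$, and $c_1^{(j)}$ are all explicit rationals: in particular $c_1^{(0)} = 1$ and $c_1^{(j)} = \frac{j-1/2}{j}c_1^{(j-1)}$, so they can be generated exactly via an elementary recursion. Summing the intervals with appropriate signs produces the four pairs of bounds $(\texttt{Ifl}, \texttt{Ifu})$, $(\texttt{Jfl}, \texttt{Jfu})$, $(\texttt{Ifmodl}, \texttt{Ifmodu})$, $(\texttt{Jfmodl}, \texttt{Jfmodu})$.

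No serious analytic obstacle is expected; the proof is essentially bookkeeping. The only point requiring care is that all arithmetic should be performed with exact symbolic fractions (not floating point), so that the output bounds remain rational and can be combined downstream with other symbolic bounds without any loss of precision. This is implemented in \texttt{profile.m}, section B2, where the relations above are applied entry-by-entry for $j \in \{0,\ldots,34\}$ and the resulting symbolic intervals are simplified to common denominators. The lemma follows.
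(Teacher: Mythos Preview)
Your proposal is correct and follows essentially the same approach as the paper: you derive the identities $I^{F_*,\text{mod}}_j = \tfrac{13}{2}J_j - (\tfrac{1}{2}+c)I_j$ and $J^{F_*,\text{mod}}_j = 5(I_{j+1} + A c_1^{(j)}) - (\tfrac{1}{2}+c)J_j$ from the profile equations and then propagate the symbolic interval bounds for $I_j, J_j$ through these linear combinations, exactly as the paper indicates with ``A symbolic calculation then leads to the following lemma.'' If anything, your write-up spells out the bookkeeping (sign-aware interval arithmetic, exact recursion for $c_1^{(j)}$) more explicitly than the paper does.
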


\section{Guide to the companion files}\label{sec:companion}
In this Section, we list the companion files and describe their content, along with their inter-dependencies. We divide the list into three main files and four additional auxiliary files.

\vspace{10pt}

\noindent {\bf Main files:}
\begin{itemize}
    \item \texttt{profile.m}: we compute the bounds relevant to the large time argument (the $L^\infty$ bounds on $F^*$ and the profile used in section~\ref{sec:linfty}), and the bounds on the integrals containing $F^*$ and the profile (which are described in section~\ref{sec:appb}). These bounds are instrumental to virtually all calculations contained in \texttt{longtime\_sym.m} and \texttt{shorttime\_sym.m}.
    \item \texttt{longtime\_sym.m}: we perform all calculations pertaining to Section~\ref{sec:largetime}. The two main tasks accomplished are the $L^\infty$ bound on the evolution of $\Theta^{\text{mod}}(t)$ (described in section~\ref{sec:linfty}), and the bound on the Laplace transform of  $\Theta^{\text{mod}}$ at $\xi = -1$ (described in section~\ref{sec:laplacem1}). This file depends on the bounds computed in \texttt{profile.m}
    \item \texttt{shorttime\_sym.m}: we carry out the calculations relevant to Section~\ref{sec:shorttime}. In particular, this file depends on the the bounds for the values of integrals of $F^*$ computed in \texttt{profile.m}. In practice, when running \texttt{shorttime\_sym.m}, the necessary values are computed by calling \texttt{build\_shorttime\_data.m}. In addition, \texttt{shorttime\_sym.m} depends on the bounds on the derivative $\Upsilon'(t)$ computed in \texttt{derivative\_sym.m}, on the fact that the expression $P(t) - Q(t)$ is positive (shown in \texttt{PQ\_sym.m}), and finally on the procedure carried out in \texttt{makepoly.m}, which serves the purpose of converting the bounds computed by Picard iteration into polynomial with rational (symbolic) coefficients (which are computed exactly via symbolic calculation).
\end{itemize}
{\bf Auxiliary files:}
\begin{itemize}
    \item \texttt{derivative\_sym.m}: in this file, we show the bounds on $\Upsilon'(t)$ required in the time-stepping part of the argument in \texttt{shorttime\_sym.m}.
    \item \texttt{PQ\_sym.m}: we show that the expression $P(t) - Q(t)$ is positive for all times. This is used in the estimates in section~\ref{sec:etas} (and the relevant symbolic computations are carried out in \texttt{shorttime\_sym.m}).
    \item \texttt{makepoly.m}: given a symbolic expression which is composed by terms of the type $\frac{p}{q} t^m e^{- nt}$, where $p/q$ is a symbolic fraction, and $m, n \in \N$, this function computes a polynomial lower bound for the expression by replacing $e^{-t}$ with a polynomial bound.
    \item \texttt{build\_shorttime\_data.m}: computes the values (integrals of $F^*$, associated quantities, and coefficients of the expansion of the profile) which are required to run \texttt{shorttime\_sym.m}.
\end{itemize}

\section{Some miscellaneous facts}\label{sec:appd}
In this section, we collect useful facts and lemmas.

\begin{lemma}\label{lem:prodottino}
    Suppose that a sequence of non-negative real numbers $\{x_k\}_{k \in \N}$ satisfies, for $k \geq M_1$
    \begin{equation}
        x_{k+1} \leq \Big(1- \frac{\alpha}{k}\Big) x_{k}
    \end{equation}
    Where $\alpha > 0$. Then, if $M_2 \geq M_1$, and if $k \geq M_2$,
    \begin{equation}
        x_k \leq x_{M_2} \big(\frac{k}{M_2} \big)^{-\alpha}
    \end{equation}
\end{lemma}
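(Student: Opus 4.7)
The plan is to iterate the hypothesis and then control the resulting product by the claimed power. Applying the recursion from $M_2$ up to $k-1$ yields
\begin{equation}
x_k \;\leq\; x_{M_2}\prod_{j=M_2}^{k-1}\Big(1 - \tfrac{\alpha}{j}\Big),
\end{equation}
so the task reduces to showing $\prod_{j=M_2}^{k-1}(1-\alpha/j) \leq (k/M_2)^{-\alpha}$.

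Before taking logarithms, I need to rule out the pathological case in which some factor $1-\alpha/j$ fails to be positive. If $\alpha \geq j$ for some $j\geq M_2$, then $x_{j+1}\leq 0$ combined with non-negativity forces $x_{j+1}=0$, and the conclusion is then trivial for all subsequent indices. Hence it suffices to treat the case $M_2 > \alpha$, in which every factor $1-\alpha/j$ is strictly positive on the range $j=M_2,\dots,k-1$.

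Taking logarithms, and using the elementary inequality $\log(1-x)\leq -x$ valid for $x\in[0,1)$, I get
\begin{equation}
\sum_{j=M_2}^{k-1}\log\!\Big(1-\tfrac{\alpha}{j}\Big) \;\leq\; -\alpha\sum_{j=M_2}^{k-1}\frac{1}{j}.
\end{equation}
The harmonic sum on the right is bounded below by a standard left-endpoint comparison: since $1/j \geq 1/t$ for $t\in[j,j+1]$,
\begin{equation}
\sum_{j=M_2}^{k-1}\frac{1}{j} \;\geq\; \int_{M_2}^{k}\frac{dt}{t} \;=\; \log\!\Big(\frac{k}{M_2}\Big).
\end{equation}

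Combining the two inequalities and exponentiating gives $\prod_{j=M_2}^{k-1}(1-\alpha/j)\leq (k/M_2)^{-\alpha}$, which together with the iterated recursion yields the claim. There is no substantive obstacle here; the only subtle point to state carefully is the preliminary reduction to $M_2 > \alpha$, which is genuinely needed because the hypothesis makes no \emph{a priori} assumption that the multiplicative factors are positive.
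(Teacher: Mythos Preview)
Your proof is correct and follows essentially the same route as the paper: iterate the recursion, take logarithms, use $\log(1-x)\leq -x$, and compare the harmonic sum to $\int_{M_2}^k \tfrac{dt}{t}$. Your version is in fact slightly more careful, since you explicitly dispose of the case $M_2\leq\alpha$ (where some factor $1-\alpha/j$ could be non-positive), which the paper's terse argument leaves implicit.
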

\begin{proof}
    We have
    \begin{equation}
        \log(x_k) = \log(x_{M_2}) + \sum_{j = M_2}^{k-1} \log\Big(1- \frac{\alpha}{k}\Big) \leq \log(x_{M_2}) -\alpha \sum_{j = M_2}^{k-1}  \frac{1}{k} \leq \log(x_{M_2}) -\alpha \int_{M_2}^k\frac{1}{x}dx
    \end{equation}
    and we conclude.
\end{proof}

\section{Notation}

We recall here the notation for the main objects in this paper as a guide the interested reader.

\begin{itemize}
    \item $\mathfrak{L}f := f + z\p_z f + \scrl f$,
    \item $\scrl$: Defined in~\cite{EP2023}, Section 4.1,
    \item $\mathscr{L}_0 q := z \partial_{z} q+\frac{1}{1+z} \mathcal{L} q+\frac 12 F_{*} \int_{z}^{\infty} \strokedint \frac{q}{(1+y)^{2}} d y$,
    \item $\mathcal{L}(\Gamma_1, \Gamma_2) =L_\theta \Gamma - \frac{\Gamma^*}{2} \strokedint \Gamma_1$,
    \item $L_\theta \Gamma = \Big(2 D_\theta \Gamma_1 + 6 \Gamma_1 - \Gamma_2, \quad 2 D_\theta \Gamma_2 + 9 \Gamma_2 - 130 \cos^2 \theta \Big(\Gamma_1 - \strokedint \Gamma_1 \Big) \Big)$,
    \item $\Gamma^*$: Solution of $ L_\theta \Gamma^* = 0$ with $\strokedint \Gamma_1 = 2$,
    \item $\strokedint f = \frac{2}{\pi}\int_0^{\frac \pi 2} f(\theta) d\theta$,
    \item $D_\theta f = \sin(2\theta)\p_\theta f$,
    \item $F^*_1 = D_\theta \Gamma^*_1 + \frac 5 2 \Gamma^*_1, \qquad \qquad F^*_2 = D_\theta \Gamma^*_2 + 4 \Gamma^*_2$,
    \item $\hat{K}_1(\xi) := \frac{(\xi+13)(\xi+10)}{\xi+23}\hat{\kappa}(\xi)$,
    \item $\hat{\kappa}(\xi) := \mathscr{F}(\kappa(r))$ where $\kappa(r) = \frac 2 \pi \int_0^\infty \Big(\frac{w_2(r)}{1+\gamma^2} \Big) d \gamma$,
    \item $K_2(t) = \frac{2}\pi \int_0^\infty \ringw_2 \frac{d\gamma}{(1+\gamma^2)}$,
        \item $\Upsilon(t) := \strokedint \exp(- t \mathcal{L}) F_* d\theta$.
\end{itemize}

\bibliographystyle{plain}
\bibliography{bibliography.bib}

\end{document}